\newtheorem{theorem}{Theorem}[subsection]
\newtheorem{cor}[theorem]{Corollary}
\newtheorem{lemma}[theorem]{Lemma}
\newtheorem{prop}[theorem]{Proposition}
\newtheorem{prob}[theorem]{Problem}
\newtheorem{conj}[theorem]{Conjecture}
\newtheorem*{theorem*}{Theorem}
\theoremstyle{definition}
\newtheorem{defin}[theorem]{Definition}
\newtheorem{notation}[theorem]{Notation}
\newtheorem{fact}[theorem]{Fact}
\newtheorem{exa}[theorem]{Example}
\theoremstyle{remark}
\newtheorem*{rem}{Remark}
\renewcommand{\b}[1]{\mathbf{#1}}
\renewcommand{\rm}[1]{\mathrm{#1}}
\renewcommand{\sf}[1]{\mathsf{#1}}
\renewcommand{\frak}[1]{\mathfrak{#1}}
\renewcommand{\c}[1]{\mathcal{#1}}
\newcommand{\bA}{\mathbf{A}}
\newcommand{\cA}{\mathcal{A}}
\newcommand{\bB}{\mathbf{B}}
\newcommand{\cB}{\mathcal{B}}
\newcommand{\bC}{\mathbf{C}}
\newcommand{\cC}{\mathcal{C}}
\newcommand{\bD}{\mathbf{D}}
\newcommand{\cD}{\mathcal{D}}
\newcommand{\bE}{\mathbf{E}}
\newcommand{\cE}{\mathcal{E}}
\newcommand{\bF}{\mathbf{F}}
\newcommand{\cF}{\mathcal{F}}
\newcommand{\bG}{\mathbf{G}}
\newcommand{\cG}{\mathcal{G}}
\newcommand{\bH}{\mathbf{H}}
\newcommand{\cH}{\mathcal{H}}
\newcommand{\bK}{\mathbf{K}}
\newcommand{\cK}{\mathcal{K}}
\newcommand{\cL}{\mathcal{L}}
\newcommand{\bM}{\mathbf{M}}
\newcommand{\cM}{\mathcal{M}}
\newcommand{\sfP}{\mathsf{P}}
\newcommand{\sfQ}{\mathsf{Q}}
\newcommand{\sfR}{\mathsf{R}}
\newcommand{\bS}{\mathbf{S}}
\newcommand{\bT}{\mathbf{T}}
\newcommand{\sfU}{\mathsf{U}}
\newcommand{\bX}{\mathbf{X}}
\newcommand{\cX}{\mathcal{X}}
\newcommand{\bY}{\mathbf{Y}}
\newcommand{\bZ}{\mathbf{Z}}
\newcommand{\flim}{\mathrm{Flim}}
\newcommand{\age}{\mathrm{Age}}
\newcommand{\fin}{\mathrm{Fin}}
\newcommand{\fr}{Fra\"iss\'e }
\renewcommand{\phi}{\varphi}
\newcommand{\emb}{\mathrm{Emb}}
\newcommand{\aut}{\mathrm{Aut}}
\newcommand{\dom}{\mathrm{dom}}
\newcommand{\im}{\mathrm{Im}}
\newcommand{\cupdots}{\cup\cdots\cup}
\newcommand{\wt}{\widetilde}
\newcommand{\ol}{\overline}
\newcommand{\cd}{\mathrm{Cd}}
\newcommand{\crit}{\mathrm{Crit}}
\newcommand{\critnd}{\mathrm{CritNd}}
\newcommand{\Left}{\mathrm{Left}}
\renewcommand{\sp}{\mathrm{Sp}}
\newcommand{\ac}{\mathrm{AC}}
\newcommand{\con}{\mathrm{Con}}
\newcommand{\econ}{\mathrm{ECon}}
\newcommand{\lex}{\preceq_{lex}}
\renewcommand{\succ}{\mathrm{Succ}}
\newcommand{\is}{\mathrm{IS}}
\newcommand{\str}{\mathbf{Str}}
\newcommand{\ct}{\mathrm{CT}}
\newcommand{\full}{\mathsf{Full}}
\newcommand{\aemb}{\mathrm{AEmb}}
\newcommand{\oemb}{\mathrm{OEmb}}
\newcommand{\demb}{\mathrm{DEmb}}
\newcommand{\la}{\langle}
\newcommand{\ra}{\rangle}
\newcommand{\sort}{\mathrm{Sort}}
\newcommand{\pair}{\mathrm{Pair}}
\newcommand{\pcon}{\mathrm{PCon}}
\renewcommand{\frak}[1]{\mathfrak{#1}}
\renewcommand{\path}{\mathsf{Path}}
\newcommand{\argpath}{\mathrm{Path}}
\newcommand{\MP}{\mathsf{MP}}
\newcommand{\spnd}{\mathrm{SpNd}}
\newcommand{\cdnd}{\mathrm{CdNd}}
\newcommand{\shp}{\mathrm{Shp}}
\begin{document}
\bibliographystyle{amsplain}

	\title{Exact big Ramsey degrees for finitely constrained binary free amalgamation classes}
	\author{Martin Balko, David Chodounsk\'{y}, Natasha Dobrinen, Jan Hubi\v{c}ka, \\ Mat\v{e}j Kone\v{c}n\'{y}, Llu\'{i}s Vena, and Andy Zucker}
	
 
 
 
 
	\maketitle

\begin{abstract}
     We characterize the big Ramsey
degrees of free amalgamation classes in finite binary languages defined by
finitely many forbidden irreducible substructures, thus refining the recent upper bounds given by Zucker. Using this characterization, we show that the \fr limit of each such class admits a strong big Ramsey structure, implying that the automorphism group of the \fr limit has a metrizable universal completion flow. 
\end{abstract}

\section{Introduction}

Set-theoretic and model-theoretic notation throughout the paper is mostly standard. We identify $m< \omega$  with the set $\{0,\dots,m-1\}$, though sometimes we will write the latter for emphasis. If $m< n< \omega$, we write $[m, n] = \{r< \omega: m\leq r\leq n\}$, and similarly for $(m, n)$, $(m, n]$, and $[m, n)$. Given a  first-order language $\cL$, we typically denote $\cL$-structures in bold letters and use the un-bolded letter to denote the underlying set, i.e.\ $A, B, C$ are the underlying sets of $\bA$, $\bB$, $\bC$, etc. If $\bA$ is an $\cL$-structure and $B\subseteq A$, then $\bA|_B$ denotes the $\cL$-structure on underlying set $B$ induced from $\bA$. Given $\cL$-structures $\mathbf{A}$ and $\mathbf{B}$, an \emph{embedding} of $\bA$ into $\bB$ is an injection from $A$ to $B$ which preserves relations, non-relations, functions, and constants. 
We write $\emb(\b{A}, \b{B})$ for the set of embeddings of $\b{A}$ into $\b{B}$, and we write $\mathbf{A}\le \mathbf{B}$ when $\emb(\b{A}, \b{B})\neq \emptyset$. 

\begin{defin}
\label{Def:Ramsey}    
Given structures  $\mathbf{A},\mathbf{B},\mathbf{C}$ with $\mathbf{A}\le \mathbf{B}\le \mathbf{C}$
and integers $r> \ell\geq 1$, we write
\begin{equation*}
\mathbf{C}\to (\mathbf{B})^{\mathbf{A}}_{r, \ell}
\end{equation*}
if for every $\gamma\colon\emb(\bA, \bC)\to r$, there is some $g\in \emb(\bB, \bC)$ with $|\gamma[g\circ \emb(\bA, \bB)]|\leq \ell$. When $\ell = 1$, we omit it from the notation.  

Given a class $\mathcal{K}$ of finite $\cL$-structures and $\b{A}\in \c{K}$, the \emph{Ramsey degree} of $\bA$ in $\cK$, denoted by $\rm{RD}(\bA, \cK)$, is the least $\ell< \omega$ (or $\infty$ if no such $\ell$ exists) such that for any integer  $r> \ell$ (equivalently, $r = \ell+1$) and any $\b{B}\in \c{K}$ with $\b{A}\leq \b{B}$, there is $\b{C}\in \c{K}$ with $\b{B}\leq \b{C}$ and $\b{C}\to (\b{B})^{\b{A}}_{r, \ell}$. The class $\c{K}$ has \emph{finite Ramsey degrees} if $\rm{RD}(\bA, \cK)< \infty$ for every $\bA\in \cK$, and $\c{K}$ has the {\em Ramsey property} if $\rm{RD}(\bA, \cK) = 1$ for every $\b{A}\in \c{K}$. 

Given an infinite $\cL$-structure $\bK$ and a finite structure $\bA\leq \bK$, the \emph{big Ramsey degree of $\bA$ in $\bK$}, denoted $\rm{BRD}(\bA, \bK)$, is the least positive integer $\ell$ (or $\infty$ if no such $\ell$ exists) so that for every integer $r> \ell$ (equivalently, $r = \ell+1$), we have $\mathbf{K}\to (\mathbf{K})^{\mathbf{A}}_{r,\ell}$. \qed
\end{defin}

For instance, the classical finite Ramsey theorem \cite{Ramsey30} says exactly that the class of finite linear orders has the Ramsey property, and the classical infinite Ramsey theorem says exactly that every finite linear order $\bA$ satisfies $\rm{BRD}(\bA, \la \omega, <\ra) = 1$. Some authors call Ramsey degrees by the name \emph{small} Ramsey degrees to distinguish them from big Ramsey degrees. 

We briefly mention that many authors formulate Definition~\ref{Def:Ramsey} with respect to \emph{copies} rather than embeddings, where a \emph{copy} is simply the image of an embedding. Comparing (big) Ramsey degrees when using copies versus embeddings is straightforward. Letting $\rm{RD}^{copy}(\bA, \cK)$ and $\rm{BRD}^{copy}(\bA, \bK)$ be defined in the obvious way, one can show that $\rm{RD}(\b{A}, \cK) = \rm{RD}^{copy}(\bA, \cK)\cdot  |\aut(\b{A})|$ and $\rm{BRD}(\bA, \bK) = \rm{BRD}^{copy}(\bA, \bK)\cdot  |\aut(\b{A})|$; see for instance Proposition 4.4 of \cite{Zuc16}. In this paper, we always refer to the embedding version unless explicitly mentioned otherwise.

Structural Ramsey theory has a long and rich history. It was initiated in the 1970's when Ne\v set\v ril and R\"odl, and independently Abramson and Harrington proved that the class of finite ordered graphs has the Ramsey property~\cite{Abramson1978,Nesetril/Rodl77}. 
In fact, the Ne\v set\v ril--R\"odl theorem is much stronger, implying the Ramsey property for several classes of finite ordered relational structures. More recently, the Kechris--Pestov--Todor\v{c}evi\'c correspondence connecting structural Ramsey theory and topological dynamics as well as Ne\v set\v ril's classification program of Ramsey classes~\cite{Nevsetril2005} boosted the development of the area which led to discoveries of many more classes with the Ramsey property. The most general result on classes with the Ramsey property is due to  Hubi\v cka and Ne\v set\v ril~\cite{HN19}, who extended the earlier proof techniques (Ne\v set\v ril and R\"odl's partite construction \cite{Nesetril1981,Nesetril1989,Nesetril2007}) to structures in languages containing both relations and functions and
gave a sufficient structural condition which can be used to show that a given class is Ramsey, effectively providing a ``black box'' for the combinatorial arguments involved.

While most classes of finite unordered structures do not have
the Ramsey property, often enriching a given class by adding a linear order 
or finitely many new relations 
produces an expansion class with the Ramsey property. 
When this is possible, 
the original class will have finite small Ramsey degrees. For instance, the class of finite ordered graphs has the Ramsey property, while the class of  finite graphs has finite Ramsey degrees. One way to show that a class $\c{K}$ has finite Ramsey degrees, developed by Kechris--Pestov--Todor\v{c}evi\'c \cite{KPT} and Nguyen Van Th\'e \cite{NVT}, is to show that $\c{K}$ admits a \emph{precompact expansion class} $\c{K}^*$ which has the Ramsey property. For example, the class of finite sets has the class of finite linear orders as a precompact expansion, and the class of finite graphs has the class of finite ordered graphs as a precompact expansion. Zucker showed in \cite{Zuc16} that in fact, every \emph{\fr class} $\c{K}$ with finite Ramsey degrees admits a pre-compact expansion class with the Ramsey property.

Recall that a class $\c{K}$ of finite structures is \emph{\fr}if it is closed under isomorphism, contains countably many structures up to isomorphism, contains structures of arbitrarily large finite size, and satisfies the following three properties.
\begin{itemize}
    \item Hereditary property (HP): If $\b{A}\leq \b{B}$ and $\b{B}\in \c{K}$, then $\b{A}\in \c{K}$.
    \item 
    Joint embedding property (JEP): If $\b{A}, \b{B}\in \c{K}$, there is $\b{C}\in \c{K}$ with both $\b{A}\leq \b{C}$ and $\b{B}\leq \b{C}$.
    \item 
    Amalgamation property (AP): For any $\b{A}, \b{B}, \b{C}\in \c{K}$ and embeddings \newline $f\colon \b{A}\to \b{B}$ and $g\colon \b{A}\to \b{C}$, there are $\b{D}\in \c{K}$ and embeddings $r\colon \b{B}\to \b{D}$ and $s\colon \b{C}\to \b{D}$ with $r\circ f = s\circ g$. 
\end{itemize}
Given a \fr class $\c{K}$, we can use the members of $\c{K}$ to build a generic countably infinite structure, the \emph{\fr limit} of $\c{K}$, denoted $\flim(\c{K})$. Writing $\b{K} = \flim(\c{K})$, then $\b{K}$ is characterized up to isomorphism by the following two properties. 
\begin{itemize}
    \item 
    $\c{K} = \age(\bK):= \{\b{A} \text{ finite}: \b{A}\leq \b{K}\}$.
    \item 
    If $\b{A}\subseteq \bK$ is finite and $f\in \emb(\b{A}, \b{K})$, there is $g\in \aut(\b{K})$ with $g|_{\b{A}} = f$.
\end{itemize}
\textbf{Until Section~\ref{Sec:FutureWork}}, $\cK$ denotes a \fr class of $\cL$-structures and $\bK = \flim(\cK)$. We will add assumptions to $\cK$ and $\bK$ later, but at a minimum this will always hold. 

A routine compactness argument yields that if $\b{A}\in \c{K}$ and $\ell< \omega$, then $\rm{RD}(\b{A}, \cK) = \ell$ iff for any $\mathbf{A}\le\mathbf{B}\in \mathcal{K}$ and any positive integer $r$, we have $\mathbf{K}\to (\mathbf{B})^{\mathbf{A}}_{r, \ell}$. In particular, $\rm{RD}(\bA, \cK)\leq \rm{BRD}(\bA, \bK)$. As is commonly done, we refer to the big Ramsey degree of $\bA$ \emph{in $\cK$} and write $\rm{BRD}(\bA, \cK)$ for $\rm{BRD}(\bA, \bK)$. We say that $\c{K}$ has {\em finite big Ramsey degrees} if $\rm{BRD}(\bA, \cK)< \infty$ for every $\bA\in \cK$. If $\cK$ has finite big Ramsey degrees, then $\cK$ has finite small Ramsey degrees. We refer to Dobrinen's ICM paper  \cite{DobICM} for background on historical and recent results on big Ramsey degrees.

It is natural to ask, in analogy to how every \fr class with finite Ramsey degrees admits a precompact expansion with the Ramsey property, if something similar can be done for classes with finite big Ramsey degrees. In this spirit, Zucker in \cite{Zuc19} defines a \emph{big Ramsey structure}, whose definition we recall (and add to) in Definition~\ref{Def:BRS}.

\begin{defin}
    \label{Def:Expansion}
    Given first order languages $\cL^*\supseteq \cL$ and an $\cL^*$-structure $\bM^*$, the \emph{$\cL$-reduct} $\bM|_\cL$ is the $\cL$-structure on the same underlying set as $\bM^*$ and with the same interpretations of symbols from $\cL$ as in $\bM$. Conversely, if $\bM$ is an $\cL$-structure, an \emph{$\cL^*$-expansion} of $\bM$ is some $\cL^*$-structure $\bM^*$ on underlying set $M$ with $\bM^*|_\cL = \bM$. Given an $\cL$-structure $\bM$, $\bB\leq \bM$, and an $\cL^*$-expansion $\bM^*$ of $\bM$, we set
$$\bM^*(\bB):= \{\bB^*: \bB^* \text{ is an $\cL^*$-expansion of $\bB$ with } \bB^*\leq \bM^*\}.$$ 
If $f\in \emb(\bB, \bM)$, we write $\bM^*{\cdot}f$ for the unique $\bB^*\in \bM^*(\bB)$ with $f\in \emb(\bB^*, \bM^*)$. \qed 
\end{defin}

\begin{defin}
    \label{Def:BRS}
    If $\cK$ has finite big Ramsey degrees, a \emph{big Ramsey structure} for $\cK$ is an $\cL^*$-expansion $\bK^*$ of $\bK$ for some first-order language $\cL^*\supseteq \cL$ satisfying the following:
\begin{itemize}
    \item
    For every $\bA\in \age(\bK)$, we have $|\bK^*(\bA)| = \rm{BRD}(\bA, \cK)$.
    \item
    On $\emb(\bA, \bK)$, the coloring $f\to \bK^*{\cdot}f$ witnesses that $\rm{BRD}(\bA, \cK)\geq |\bK^*(\bA)|$.
\end{itemize}

We call $\bK^*$ a \emph{strong} big Ramsey structure for $\cK$ if additionally, $\bK^*$ satisfies the direct analogue of the infinite Ramsey theorem, i.e.\ if we have:
\begin{itemize}
    \item 
    For every $\bA^*\in \age(\bK^*)$, we have $\bK^*\to (\bK^*)^{\bA^*}_2$. \qed
\end{itemize}
\end{defin}
For example, $\la \omega, <\ra$ is a strong big Ramsey structure for the class of finite sets. In fact, all known examples of \fr classes with finite big Ramsey degrees admit strong big Ramsey structures.

While big Ramsey degrees 
(and even in a sense, the strong big Ramsey structures)  of the Rado graph and more generally, unrestricted structures with finitely many binary relations
were fully understood by 2006 via  work of 
Sauer \cite{Sauer06} and its sequel \cite{LSV} by Laflamme, Sauer, and Vuksanovic,
the big Ramsey degrees for  the  $k$-clique-free analogues of the Rado graph, called Henson graphs,   
were  not fully characterized until the present paper.
Vertices were shown to have big Ramsey degree one 
 for the triangle-free  \cite{KR}
and all  $k$-clique-free \cite{ES89} Henson graphs.
Moreover, 
Sauer had shown by 1998 that the big Ramsey degree for edges is two in  triangle-free Henson graph \cite{sauer1998}.
However, a general theorem proving finite big Ramsey degrees  in the Henson graphs  remained elusive, due to the fact that the standard techniques using Milliken's Theorem were known not to work in this setting.
Answering  questions of Sauer, Dobrinen~\cite{Dob0, Dob} developed the new  techniques  of  coding trees and  set-theoretic forcing  on these trees to show (in ZFC) that the classes of finite triangle-free graphs and more generally, finite $k$-clique-free graphs for each $k\ge 3$, have finite big Ramsey degrees. 
Generalizing these techniques to obtain a simpler proof of finite big Ramsey degrees, Zucker~\cite{Zuc22} considered \fr classes of the following form. Let $\mathcal{L}$ be a relational language. An $\mathcal{L}$-structure $\mathbf{F}$ is {\em irreducible} if every $a\neq b\in F$ is contained in some relation, meaning there are $R\in \cL$ and $\vec{c}$ from $F$ with $R^\bF(\vec{c})$ and $a, b\in \vec{c}$.
Given  a set  $\mathcal{F}$  of  finite irreducible 
$\mathcal{L}$-structures such that each $\bF\in \cF$ has size at least two,
let 
$$\rm{Forb}(\mathcal{F}) = \{\bA: \bA \text{ is a finite $\cL$-structure and }\forall\, \bF\in\cF\, (\bF\not\leq \bA)\}.$$
The class $\rm{Forb}(\cF)$ is always a \fr class which additionally satisfies a strengthening of the amalgamation property called \emph{free amalgamation}. This means that given an amalgamation problem $f\colon \bA\to \bB$ and $g\colon \bA\to \bC$, we can find a solution $r\colon \bB\to \bD$ and $s\colon \bC\to \bD$ satisfying the following.
\begin{itemize}
    \item 
    $D = \im(r)\cup \im(s)$.
    \item 
    $\im(r)\cap \im(s) = \im(r\circ f) = \im(s\circ g)$.
    \item 
    If $a\neq b\in D$ are contained in a relation, then $\{a, b\}\subseteq \im(r)$ or $\{a, b\}\subseteq \im(s)$.
\end{itemize}
Conversely, if $\cK$ is a \fr free amalgamation class, then $\cK = \rm{Forb}(\cF)$ for some set $\cF$ of finite irreducible $\cL$-structures. If $\cL$ is a finite relational language with symbols of arity at most two and $\cF$ is a \emph{finite} set of finite irreducible $\cL$-strucutres,  we call $\rm{Forb}(\cF)$ a \emph{finitely-constrained binary free amalgamation class}. In \cite{Zuc22}, Zucker proved that every finitely-constrained binary free amalgamation class has finite big Ramsey degrees. We mention that by examples of Sauer \cite{Sauer03}, the condition that $\cF$ be finite is necessary.

Dobrinen conjectured that the upper bounds obtained in \cite{Dob0} and  \cite{Dob}   for tri\-an\-gle-free and $k$-clique-free graphs were exact. 
This turns out  not to be the case in general and some slight modifications were needed to obtain the exact values. While these were being prepared in \cite{Dob2}, Balko, Chodounsk\'y, Hubi\v{c}ka, Kone\v{c}n\'y, and Vena independently obtained exact values derived from the upper bounds obtained in \cite{Hubicka2020CS}. Slightly before this, general upper bounds for binary finitely-constrained classes appeared in \cite{Zuc22}. Hence the seven authors decided to come together and produce this work. Here, the structural properties responsible for exact big Ramsey degrees will be developed in 
the general framework due to
Zucker in \cite{Zuc22}. This paper  characterizes exact big Ramsey degrees for finitely-constrained binary free amalgamation classes, culminating  and
 concluding work  in
\cite{Dob0}, \cite{Dob}, \cite{EHP}, \cite{ES89}, \cite{ES93}, \cite{Hubicka2020CS},  \cite{KR},  \cite{LSV}, \cite{PS},
\cite{Sauer06},   and  \cite{Zuc22}. The following is our main theorem.

\begin{theorem}\label{thm:main_intro}
    There is an algorithm which, when given a finitely-constrained binary free amalgamation class $\cK$ and $\bA\in \cK$, outputs $\rm{BRD}(\bA, \cK)$.
\end{theorem}

A key step in our characterization is finding the lower bounds, i.e.\ producing \emph{unavoidable} colorings of $\emb(\bA, \bK)$ for each $\bA\in \age(\bK)$. Our construction of these colorings yields the following.

\begin{theorem}
    \label{thm:main_intro_2}
    Let $\cK$ be a finitely-constrained binary free amalgamation class. Then $\cK$ admits a strong big Ramsey structure in a finite relational language. In particular, $\aut(\bK)$ has a metrizable universal completion flow.
\end{theorem}

\emph{Universal completion flows} were introduced by Zucker in \cite{Zuc19}. While it is unknown whether every topological group admits a universal completion flow, it is proven in \cite{Zuc19} that if $\cK$ admits a big Ramsey structure, then $\aut(\bK)$ has a metrizable universal completion flow. Indeed, in a suitable space of expansions of $\bK$, the universal completion flow is formed by simply taking the orbit closure of any big Ramsey structure.

\subsection{Big Ramsey degrees and trees}
\label{SubSec:BRD_Trees}

As we remarked above, $\la \omega, <\ra$ is a strong big Ramsey structure for the class of finite sets. One might hope that adding a well-order is enough to obtain a strong big Ramsey structure in general. However, this is not the case; as soon as the class has non-trivial binary relations, trees start appearing in the study of its big Ramsey behavior. The following paragraphs give some intuition about this, and we hope that they will give the reader a better ``big picture" understanding of what is happening in the main part of this paper.

Let $\cG$ denote the class of finite graphs, let $\mathbf{G} = \la G, E^\bG\ra = \flim(\cG)$ be the \emph{random graph}
(or {\em Rado graph}), and suppose that we want to construct a ``bad'' coloring of $\emb(\bA, \bG)$ for some $\b{A}\in \cG$ (in the sense that every copy of $\bG$ in $\bG$ attains many colors). We can assume that $G =\omega$. Considering vertex $0$, the remaining vertices of $\mathbf G$ split into two \emph{types} -- those which are connected to 0 and those which are not. 
In general, 
for each set of vertices $\{0,\dots,n-1\}\subseteq G$,
there are $2^n$ many types:
Each function $f:n\to 2$ is 
corresponds to the collection of 
all vertices $k\ge n$ in $G$ such that for each $i<n$, $k$ has an edge with $i$ if and only if $f(i)=1$. The set of all types over initial segments of $G$ can thus be identified with the set $T = 2^{<\omega}$ of all finite binary strings. There are three natural partial orders on $T$: the lexicographic order $\preceq$ (which is a dense linear order), the order $\sqsubseteq$ defined by $s\sqsubseteq t$ if $s$ is an initial segment of $t$, which defines a tree ordering on $T$, and the partial order $\leq_\ell$ of \emph{relative levels} given by $s\leq_\ell t$ iff $\dom(s)\leq \dom(t)$. Given $s,t\in T$, we define $s\wedge t$ to be the longest common prefix of $s$ and $t$ and call it their \emph{meet}. We also form a function $c\colon \omega\to T$ where given $v\in\mathbf \omega$, we have $\dom(c(v)) = v$ and $c(v)(i) = 1$ iff $E^\bG(v, i)$ holds.

Notice that just from fixing an enumeration, we obtained a tree with levels and a meet operation as well as a map $c$ which maps vertices of $\mathbf G$ into this tree; this is the \emph{coding tree} of $\bG$. In particular, we can use this tree to color $\emb(\bA, \bG)$, where given $f\in \emb(\bA, \bG)$, we record certain information about the function $c\circ f\colon \bA\to T$. Writing $S = \im(f)\subseteq \omega$ and $\crit(S) := S\cup \{\dom(c(i)\wedge c(j)): i, j\in S\} := \{i_0<\cdots < i_{n-1}\}$,  we obtain for each $a\in A$ a function $\pi_f(a)\in 2^{<n}$ with $\dom(\pi_f(a)) =m$ iff $f(a) = i_m$ and with $\pi_f(a)(j) = (c\circ f(a))(i_j)$, for each $j<m$. We color $f$ based on the map $a\to \pi_f(a)$.

It turns out that this coloring is not \emph{unavoidable}.     
In addition to proving 
that the random graph has finite big Ramsey degrees,
Sauer~\cite{Sauer06} considered the above coloring and constructed an $\eta\in \emb(\bG, \bG)$ which reduced the number of colors, and soon after, Laflamme, Sauer and Vuksanovic~\cite{LSV}  proved that Sauer's $\eta$  eliminates as many colors as possible. The remaining colors are such that, with notation as above, the $\sqsubseteq$-downwards closure of $\im(\pi_f)$ has exactly one ``interesting event" per level, i.e.\ exactly one node $v$ with either $\pi_f(a) = v$ for some $a\in A$ (a ``coding event") or $\pi_f(a)\wedge \pi_f(b)$ for some $a, b\in A$ (a ``splitting event"). In particular,  the coding nodes form an antichain in the tree.

Hence by enumerating the random graph, we create two kinds of ``interesting events" which we can use to construct unavoidable colorings, namely splitting and coding. For the random graph, these are the only interesting events. However, consider the class $\cG_3$ of finite triangle-free graphs, and set $\mathbf G_3 = \flim(\cG_3)$. We once again fix a well-order of vertices of $\mathbf G_3$, identify its vertex set with $\omega$. Recall that a type is a set of all vertices of $\mathbf G_3$ connected in the same way to a given initial segment of vertices of $\mathbf G_3$, and so we can ask which finite graphs embed into the substructure of $\mathbf G_3$ induced on this set (called the \emph{age} of the type). For $\mathbf G$ the answer is always $\cG$. In $\mathbf G_3$, however, the situation is more complex. If the type has no edges to the corresponding initial segment, then its age is $\cG_3$, but once the type attains a neighbor, its age shrinks to the class of all finite graphs with no edges; since all vertices of the type have a common neighbor, no two of them can be connected by an edge. Thus the appearance of the first neighbor of a vertex is an interesting event, and the definition of $\crit(S)$ in this setting must account for these. In general, one has to consider ages of not only single types, but of several types combined. For example, given two different types in $\bG_3$ over the same initial segment, an interesting event we must track is when these two types have an edge to a \emph{common} vertex of the initial segment. This corresponds to the notion of \emph{parallel ones} from \cite{sauer1998} and \cite{Dob0}. 
Here, we will broadly call events like this \emph{age changes}, and they form a crucial part of our characterization of exact big Ramsey degrees.

These ideas can be tracked back to work of Sauer and his co-authors who, in a series of papers~\cite{ES89}, \cite{ES93}, \cite{LNVTPS}, \cite{DLPS}, \cite{Sauer2012}, \cite{Sauer03}, \cite{SNVT}, studied these questions in the special case of vertex colorings. He introduced the poset of ages and used it to characterize the big Ramsey degrees of vertices of many homogeneous structures~\cite{Sauer03}, including  the examples 
which will be
discussed in Subsection~\ref{SubSec:Organization}. Characterizing big Ramsey degrees for larger finite substructures requires a generalization of this partial order which we develop in Section~\ref{Section:AgeClasses}.

\subsection{Organization of the paper}
\label{SubSec:Organization}

Section~\ref{Section:AgeClasses} introduces the key concept of gluings, which we use to define the higher-dimensional posets of ages. Section~\ref{Sec:Diaries} defines \emph{diaries}, the tree-like objects which are induced by the key ``unavoidable" aspects of the coding trees discussed in Subsection~\ref{SubSec:BRD_Trees} responsible for the big Ramsey degrees. In particular, every diary codes an $\cL$-structure embeddable into $\bK$, and any diary which codes an $\cL$-structure isomorphic to $\bK$ can be encoded as an $\cL^*$-expansion of $\bK$ for some suitable $\cL^*$. Using ideas from a recent preprint of Dobrinen and Zucker \cite{DZ2023}, Proposition~\ref{Prop:Exists_Diary} constructs a diary coding $\bK$.\footnote{This is a key difference between this version of the paper and the first version which appeared on arXiv. It allows us to eliminate the concept of a ``diagonal substructure," which dramatically simplifies the upper bound proof, now the content of Section~\ref{Sec:Upper_bounds}.}

The main goal of the rest of the paper is to show that diaries which encode $\bK$ are strong big Ramsey structures for $\cK$. The argument has two main components, namely lower and upper bounds for big Ramsey degrees. Section~\ref{Section:Proof_Of_Thm_EmbDiary} proves the lower bounds by proving the stronger Theorem~\ref{Thm:EmbDiary}, which says that any two diaries coding $\bK$ are bi-embeddable. Section~\ref{Sec:Upper_bounds} proves the upper bounds for big Ramsey degrees using the general upper bound theorem from \cite{Zuc22}. This along with the bi-embeddability from Theorem~\ref{Thm:EmbDiary} suffice to show that any diary coding $\bK$ is a strong big Ramsey structure for $\cK$. Lastly, Section~\ref{Sec:FutureWork} collects some open questions and previews some related ongoing work.

Throughout, we also develop a few key examples to help illustrate the definitions and the results. These examples are:
\begin{itemize}
    \item 
    Given $\ell \geq 3$, $\cG_\ell$ denotes the class of finite $K_\ell$-free graphs. We denote its \fr limit by $\bG_\ell$. We will especially focus on the case $\ell = 3$. 
    \item 
    $\cG_\bT$ denotes the class of finite oriented graphs (i.e.\ no loops and no $2$-cycles) which do not embed the oriented $3$-cycle, which we denote by $\bT$. Write $\bG_\bT$ for the \fr limit. 

    This example differs from $\cG_\ell$ since the single unary is \emph{free} (Definition~\ref{Def:Free}). Roughly speaking, this says that there is a non-trivial way of extending members of the class by one new point, with all old points having the same \emph{non-trivial} relation to the new point, and such that the result stays in the class. Another example of a class with a single unary which is free is $\cG$ (Subsection~\ref{SubSec:BRD_Trees}), but it is interesting to develop an example with both a free relation and a non-trivial forbidden substructure.
    \item 
    $\cH$ denotes the class of finite graphs with two types of edges, which we call red or blue (i.e.\ a given pair of vertices can be unrelated, red-related, or blue-related), and which forbid monochromatic triangles. We let $\bH$ denote the \fr limit.

    This class differs from both of the above classes in that the (one-dimensional) poset of ages has more than one \emph{maximal path} (Definition~\ref{Def:Paths}). As mentioned earlier in the introduction, Sauer in \cite{Sauer03} used the poset of ages, and in particular maximal paths through it, to compute the big Ramsey degrees of vertices. In particular, vertices in $\cH$ will have big Ramsey degree $2$. 
\end{itemize}
We mention that all of these examples only have a single unary. While we do give the proof in full generality for any number of unary predicates, we opt to keep the examples relatively simple while illustrating key differences between big Ramsey and small Ramsey degrees.

\section{\texorpdfstring{$\c{L}$}{L}-structures, gluings, and age classes}
\label{Section:AgeClasses}

As indicated in the introduction, understanding the poset of ages is one of the key ingredients in our characterization of big Ramsey degrees. Since members of this poset can be described by which fragments of forbidden structures one can glue to various structures without creating a forbidden structure, we spend this section developing the necessary formalism for being able to do this abstractly.

\textbf{Until Section~\ref{Sec:FutureWork}}, $\cL$ is a finite relational language with symbols of arity at most two, $\cF$ is a finite set of finite irreducible $\cL$-structures so that each $\bF\in \cF$ has size at least $2$, $\cK = \rm{Forb}(\cF)$, and $\bK = \flim(\cK)$. Let $\|\cF\| = \max\{|F|: \bF\in \cF\}$. We can arrange, by changing $\cL$ and $\cF$ as necessary, that all of the following hold for any $\bA\in \cK$:
\begin{itemize}
    \item 
    For any $a\in A$, there is exactly one unary predicate $U\in \cL$ so that $U^\bA(a)$ holds.
    \item
    For any $a\in A$ and any binary $R\in \cL$, we have $\neg R^\bA(a, a)$.
    \item
    For any $a\neq b\in A$, there is at most one $R\in \cL$ with $R^\bA(a, b)$.
    \item
    There is a map $\rm{Flip}: \cL\to \cL$ so that for $a\neq b\in A$, $R^\bA(a, b)$ iff $\rm{Flip}(R)^\bA(b, a)$.
\end{itemize}
Writing $\cL^\sf{u} = \{U_i: i< \sfU\}$ and $\cL^\sf{b} = \{R_i: i< k\}$ for the unary and binary predicates, respectively, we treat $U^\bA\colon A\to \sfU$ and $R^\bA\colon A^2\setminus \{(a, a): a\in A\}\to k$ as functions, i.e.\ $U^\bA(a) = i$ iff $U^\bA_i(a)$ holds, etc.

The relational symbol $R_0$ will play the role of ``no relation,'' and this is the sense in which we interpret notions such as ``irreducible,'' ``free amalgam,'' etc.

We let $\{V_n: n< \omega\}$ be new unary symbols not in $\cL^\sf{u}$, and we set $\cL_d := \cL^\sf{b}\cup \{V_i: i< d\}$. We always assume that the following items hold for any $\cL_d$-structure $\bB$ that we discuss: 
\begin{itemize}
    \item 
    $B\cap \omega = \emptyset$.
    \item
    For any $a\in B$, there is exactly one $i< d$ so that $V_i^\bB(a)$ holds. 
    \item
    Conventions regarding the binary symbols are exactly the same as those for $\cL$-structures. 
\end{itemize}
We similarly treat $V^\bB\colon B\to d$ as a function. Write $\fin(\cL_d)$ for the class of finite $\cL_d$-structures.

\subsection{Gluings}

Our reason for the convention that $B\cap \omega = \emptyset$ whenever $\bB$ is an $\cL_d$-structure is that we will attach members of $\fin(\cL_d)$ to $\cL$-structures to form new $\cL$-structures, and we will often consider $\cL$-structures whose underlying set is a subset of $\omega$.

\begin{defin}
    \label{Def:Gluings}
    A \emph{gluing} is a triple $\gamma:= (\bX, \rho, \eta)$ where:
\begin{itemize}
    \item
    $\bX$ is a finite $\cL$-structure with $X\subseteq \omega$. We call $X$ the \emph{underlying set} of $\gamma$ and $\bX$ the \emph{structure} of $\gamma$.
    \item 
    There is $d< \omega$ so that $\rho\colon d\to \sfU$ is a function. We call $d$ the \emph{rank} of $\gamma$ and $\rho$ the \emph{sort} of $\gamma$. More generally, we can call any function from a natural number to $\sfU$ a \emph{sort}.
    \item
    $\eta\colon d\times X \to  k$ is a function called the \emph{attachment map} of $\gamma$.
\end{itemize}

Given $\rho\colon d\to \sfU$, we let $\rm{Glue}(\rho)$ be the set of gluings with sort $\rho$, and we let $\tilde{\rho}\in \rm{Glue}(\rho)$ be the gluing $(\emptyset, \rho, \emptyset)$.

Given $\bB$ an $\cL_d$-structure and $\gamma = (\bX, \rho, \eta)$ a rank $d$ gluing, the $\cL$-structure $\gamma(\bB)$ is formed on underlying set $X\cup B$ so that:
\begin{itemize}
    \item 
    $\gamma(\bB)|_X= \bX$.
    \item
    The $\cL^\sf{b}$-part of $\gamma(\bB)$ on $B$ is induced from $\bB$.
    \item
    If $b\in B$, we have $U^{\gamma(\bB)}(b) = \rho\circ V^\bB(b)$.
    \item
    If $b\in B$ and $x\in X$, we have $R^{\gamma(\bB)}(b, x) = \eta(V^\bB(b), x)$. \qed
\end{itemize} 
\end{defin}

\begin{rem}
$\mathcal{L}_d$-structures and gluings enable a precise description of the different ways a given finite $\mathcal{L}$-structure can be extended to another $\mathcal{L}$-structure. The $\mathcal{L}_d$-structures are structures $\mathbf{B}$ with binary relations in $\cL^\sf{b}$ with an accompanying partition of $B$ into $d$-many (possibly empty) labeled pieces $\{B_i: i<d\}$. Given a finite $\mathcal{L}$-structure $\bX$, a gluing $\gamma=(\bX,\rho,\eta)$ of rank $d$  is a set of instructions for gluing any $\mathcal{L}_d$-structure $\mathbf{B}$ to $\bX$ to obtain an $\mathcal{L}$-structure extending $\bX$. Specifically, for each $i<d$, $\gamma$ assigns each element of $B_i$ the same unary relation $U_{\rho(i)}$ in $\cL^\sf{u}$, and for each $x$ in $X$, $\gamma$ assigns the same $\cL^\sf{b}$-relation between $x$ and each element of $B_i$ (see Figure~\ref{fig:Gluing}). 
    \begin{figure}[ht]
		\centering		\includegraphics{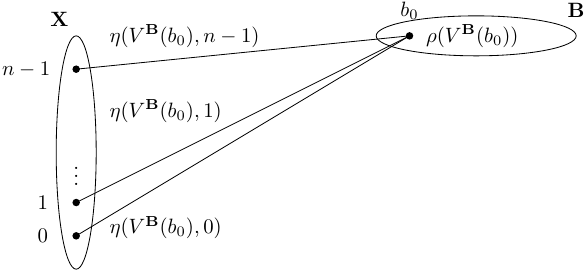}
		\caption{A gluing $(\bX, \rho, \eta)$ with $X = \{0,...,n-1\}$}
		\label{fig:Gluing}
    \end{figure}
\end{rem}

As the (abuse of) notation suggests, the gluing $\gamma$ gives rise to a map from the class of $\cL_d$-structures to the class of $\cL$-structures. This suggests the following convention.
\vspace{2 mm}

\noindent
\textbf{Convention}: Given a class $\cC$ of $\cL'$-structures ($\cL'\in \{\cL, \cL_d\}$), we identify $\cC$ with its characteristic function, i.e.\ $\cC(\bA) = 1$ iff $\bA\in \cC$. 

\begin{defin}
 \label{Def:Poset_Of_Classes}
Given a gluing $\gamma$ of rank $d$, we define the class of finite $\cL_d$-structures
$$\cK{\cdot}\gamma := \{\bB\in \fin(\cL_d): \gamma(\bB)\in \cK\}$$
Note that by treating $\cK$ and $\cK{\cdot}\gamma$ as characteristic functions, the notation becomes quite suggestive, i.e.\ $(\cK{\cdot}\gamma)(\bB) = \cK(\gamma(\bB))$. 
   
A class $\cA$ of finite $\cL_d$-structures \emph{contains all unaries} if $\cA$ contains every singleton $\cL_d$-structure. A rank $d$ gluing $\gamma$ \emph{admits all unaries} if $\cK{\cdot}\gamma$ contains all unaries.
In other words, writing $\gamma = (\bX, \rho, \eta)$, then $\bX\in \cK$ and for each $i<d$, we can extend $\bX$ to a new member of $\cK$ by adding one new vertex with unary $\rho(i)$ and connecting this new vertex to $\bX$ with binary relations determined by $\eta|_{\{i\}\times X}$. 
    
Given a sort $\rho\colon d\to \sfU$, we set
$$P(\rho):= \{\cK{\cdot}\gamma: \gamma\in \rm{Glue}(\rho)\text{ admits all unaries}\}$$
We treat $P(\rho)$ as a partial order under inclusion. In the case $\sfU = 1$, there is a unique sort with domain $d$, so we just write $P(d)$. \qed
\end{defin}

\begin{rem}
    When $\sfU = 1$ and we discuss $P(1)$, it is helpful to keep in mind that even though members of $P(1)$ are classes of $\cL_1$-structures, each such class is bi-interpretable with a subclass of $\cK$, where the interpretation interchanges the unary $U_0$ with the unary $V_0$. In particular, the maximal member of $P(1)$ is always bi-interpretable with $\cK$. Going forward, we will perform these bi-interpretations without explicit mention, and simply regard members of $P(1)$ as subclasses of $\cK$.  
\end{rem}

\begin{exa}
    \label{Exa:TFreeAges}
	Let $\c{K} = \cG_\bT$ be the class of \emph{oriented} graphs which don't embed the oriented $3$-cycle $\bT$ as in Subsection~\ref{SubSec:Organization}. Here $\sfU = 1$ and $k = 3$, with $\mathrm{Flip}(1) = 2$  and $\mathrm{Flip}(2)=1$. The poset $P(1)$ contains only $\cG_\bT$ (up to bi-interpretability), while the poset $P(2)$ contains $4$ members -- the class of finite $\b{T}$-free oriented graphs $\bB$ with $V_0^\bB, V_1^\bB$ arbitrary; the class of finite $\b{T}$-free oriented graphs $\bB$ with no edges from $V_0^\bB$ to $V_1^\bB$; the class of finite $\b{T}$-free oriented graphs $\bB$ with no edges from $V_1^\bB$ to $V_0^\bB$, and the class of finite $\b{T}$-free oriented graphs $\bB$ with no edges between $V_0^\bB$ and $V_1^\bB$. We can encode these $4$ classes via the $4$ \emph{directed} graphs (i.e.\ no loops, but $2$-cycles are ok) on vertex set $2$. Because $\bT$ has $3$ vertices, we have that for $d\geq 3$, $P(d)$ is determined by what is allowed on any pair $V_i^\bB, V_j^\bB$. Hence the members of $P(d)$ are in one-one correspondence with \emph{directed} graphs on vertex set $d$. 
\end{exa}

\begin{prop}
    \label{Prop:Finite_Posets}
    For any sort $\rho\colon d\to \sfU$, $P(\rho)$ is finite.
\end{prop}

\begin{proof}
Fix $\gamma = (\bX, \rho, \eta) \in \rm{Glue}(\rho)$. Observe that since $\cK$ is a free amalgamation class, so is $\cK{\cdot}\gamma$. Thus $\cK{\cdot}\gamma = \rm{Forb}(\cE)$ for some set $\cE$ of finite irreducible $\cL_d$-structures. We can assume that distinct $\bA, \bB\in \cE$ do not embed into one another. So fix $\bB\in \cE$. As $\gamma(\bB)\not\in \cK$, find $\bF\subseteq \gamma(\bB)$ isomorphic to a member of $\cF$. Towards showing that $B\subseteq F$, fix $b\in B$, and consider $\bC\subsetneq\bB$ induced on $B\setminus \{b\}$. By our assumption on $\cE$, we have $\gamma(\bC)\in \cK$. In particular, $F\not\subseteq C\cup X$. As $F\subseteq B\cup X$, it follows that we must have $b\in F$, so $B\subseteq F$ as desired. 
    Hence up to isomorphism, there are only finitely many possibilities for $\bB$, so also for $\cE$. 
\end{proof}

Since $\cK$ has free amalgamation, the following construction on gluings is quite natural.

\begin{defin}
    \label{Def:Union_Gluing}
    We call $\gamma_0 = (\bX_0, \rho, \eta_0)$ and $\gamma_1 = (\bX_1, \rho, \eta_1)\in \rm{Glue}(\rho)$ \emph{disjoint} if $X_0\cap X_1 = \emptyset$. Given disjoint $\gamma_0, \gamma_1\in \rm{Glue}(\rho)$ as above, the \emph{union gluing} $\gamma_0\cup \gamma_1$ is the gluing $(\bX_0\sqcup \bX_1, \rho, \eta_0\cup \eta_1)\in \rm{Glue}(\rho)$, where $\bX_0\sqcup \bX_1$ is the $\cL$-structuure on $X_0\cup X_1$ with $\bX_0$ and $\bX_1$ as induced substructures and with $R^{\bX_0\sqcup \bX_1}(x_0, x_1) = 0$ for every $x_0\in X_0$ and $x_1\in X_1$.
\end{defin}

\begin{lemma}
\label{Lem:Union_Gluing}
    For any sort $\rho$ and disjoint gluings $\gamma_0, \gamma_1\in \rm{Glue}(\rho)$, then writing $\gamma = \gamma_0\cup \gamma_1$, we have $\cK{\cdot}\gamma = \cK{\cdot}\gamma_0\cap \cK{\cdot}\gamma_1$.
\end{lemma}

\begin{proof}
    This is an immediate consequence of free amalgamation in $\cK$.
\end{proof}

\begin{prop}
    \label{Prop:Intersections}
    For any sort $\rho\colon d\to \sfU$, $P(\rho)$ is closed under intersections.
\end{prop}

\begin{proof}
As $P(\rho)$ is finite, it is enough to consider $\cA, \cB\in P(\rho)$ and show that $\cA\cap \cB \in P(\rho)$. Let $\gamma_\cA = (\bX_\cA, \rho, \eta_\cA)$ and $\gamma_\cB = (\bX_\cB, \rho, \eta_\cA)$ be disjoint gluings with $\cA = \cK{\cdot}\gamma_\cA$, and $\cB = \cK{\cdot}\gamma_\cB$. Form $\gamma_\cA\cup \gamma_\cB$ and apply Lemma~\ref{Lem:Union_Gluing}.
\end{proof}

\begin{exa}
    \label{Exa:LFreeGraphs}
	Fix $\ell\geq 3$ and consider the class $\c{G}_\ell$ of $K_\ell$-free graphs. So $\sfU = 1$ and $k = 2$. Given $d< \omega$, let $\binom{d}{{<}\ell}$ denote the collection of subsets of $\{0,\dots,d-1\}$ of size at most $\ell-1$. Each member of $P(d)$ is described by a function $z\colon \binom{d}{{<}\ell}\to \{1,2,\dots,\ell-1\}$ satisfying the following properties:
    \begin{enumerate}
        \item 
        $z$ is monotone: If $S, T\in \binom{d}{{<}\ell}$ and $S\subseteq T$, then $z(S)\leq z(T)$,
        \item
        $z$ is consistent: For each $T\in \binom{d}{{<}\ell}$, there is a function $\sigma\colon T\to \omega$ with $\sum_{i\in T}\sigma(i) = z(T)$ and so that for any $S\subseteq T$, we have $\sum_{i\in S}\sigma(i)\leq z(S)$.
    \end{enumerate}
    We call a function $z$ satisfying the above $2$ items a \emph{$(d, \ell)$-graph-age function}, and an \emph{$\ell$-graph-age function} is just a $(d, \ell)$-graph-age function for some $d< \omega$. Given a $(d, \ell)$-graph-age function $z$, we define the class $\c{G}_z$ via
    $$\c{G}_z := \left\{\b{B}\in \fin(\cL_d): \forall T\in \binom{d}{{<}\ell} \left[\bigcup_{i\in T} V_i^\b{B} \text{ contains no $(z(T)+1)$-clique}\right]\right\}.$$
    So the number $z(T)$ describes the largest clique which is allowed to appear in $\bigcup_{i\in T} V_i^\bB$.  
    
    We sketch the argument that $P(d)$ is indeed characterized by classes of this form. First, fix $z$ a $(d, \ell)$-graph-age function. To see that $\cG_z\in P(d)$, build a gluing $\gamma = (\bX, \rho, \eta)$ as follows.  For each $T\in \binom{d}{{<}\ell}$, add an $(\ell-1-z(T))$-clique $\bX_T$ to $\bX$, so that $X = \bigsqcup_{T\in \binom{d}{{<}\ell}} X_T$. Then define $\eta\colon d\times X\to 2$ by setting $\eta(i, x) = 1$ iff for some $T\in \binom{d}{{<}\ell}$ we have $i\in T$ and $x\in X_T$. It is straightforward to check that $\cG_\ell{\cdot}\gamma = \cG_z$. Conversely, if $\cA\in P(d)$, then letting $z_\cA\colon \binom{d}{{<}\ell}\to \{1,...,\ell-1\}$ be such that $z_\cA(T)$ describes the largest possible clique in $\cA$ with unaries from $T$, then one can check that $\cA = \cG_{z_\cA}$.

    Unfortunately, checking whether a given function $z$ satisfies item $(2)$ of the definition of $(d, \ell)$-graph-age function seems to be a challenging combinatorial problem.  When $\ell = 3$, however, item $(1)$ implies item $(2)$. We briefly mention that since the single unary in each $\cG_\ell$ is not \emph{free} (Definition~\ref{Def:Free}), the subset of $P(d)$ corresponding to those $(d, \ell)$-graph-age functions with $z(\{i\})\leq \ell-2$ for each $i< d$ will be of special importance. When $\ell = 3$, we can think of such $(d, \ell)$-graph-age functions as graphs; each singleton must get value $1$, and for pairs, we can think of value $2$ as an edge and value $1$ as a non-edge. Each graph on vertex set $d$ vertices corresponds to such a $(d, 3)$-graph-age function. \qed
\end{exa}

\subsection{Manipulating classes of structures}

When we introduce aged coding trees in Section~\ref{Sec:Diaries}, each level set of size $d$ from such a tree will be endowed with a class of $\cL_d$-structures. Passing to a subset of this level set will then correspond to a restriction operation on the class. Using a general form of this restriction operation, we discuss how to shrink a given class of $\cL_d$-structures along some $S\subseteq d$ to obtain a new, smaller class of $\cL_d$-structures. We then analyze what it means for such an operation to result in a \emph{consecutive} pair of classes.
\vspace{2 mm}

\noindent
\textbf{Convention}: When discussing partial functions $e\colon d_0\rightharpoonup d$ with $d_0, d<\omega$, we think of $e$ as equipped with the knowledge of what $d_0$ is, even if $\dom(e)\subsetneq d_0$, and of what $d$ is, even if $e$ is not surjective. 

\begin{defin}
    \label{Def:Maps_and_Gluings}
    Let $d_0, d< \omega$ and $e\colon d_0\rightharpoonup d$ be a partial function. If $\bB$ is an $\cL_{d_0}$-structure, then $e{\cdot}\bB$ is the $\cL_d$-structure on the underlying set $\{x\in \bB: V^\bB(x)\in \dom(e)\}$, with $\cL^\sf{b}$-part induced from $\bB$, and with $V^{e\cdot \bB}(x) = e(V^\bB(x))$. In other words, we take $\bB$, throw away the points whose unary is not in $\dom(e)$, then relabel the unaries according to the function $e$. If $\cA$ is a class of finite $\cL_d$-structures, we set
    $$\cA{\cdot}e := \{\bB\in \fin(\cL_{d_0}): e{\cdot}\bB\in \cA\}.$$
    Treating $\cA$ and $\cA{\cdot}e$ as characteristic functions, we have $(\cA{\cdot}e)(\bB) = \cA(e{\cdot}\bB)$.
\end{defin} 

\begin{figure}[ht]
		\centering
		\includegraphics{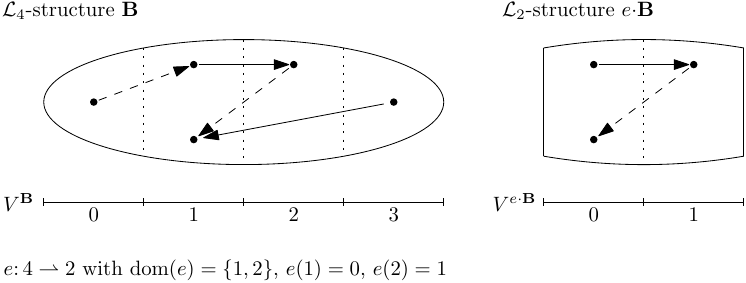}
		\caption{An example of forming $e{\cdot}\bB$ from $e$ and $\bB$.}
		\label{fig:Be}
    \end{figure}

    \begin{notation}  
    \label{Notation:Manipulate_Classes}
  Given an injection $e\colon d_0\to d$, we treat $e^{-1}\colon d\rightharpoonup d_0$ as a partial function with domain $\im(e)$. By abusing notation, we can identify a finite 
    set
    $S\subseteq \omega$ with its increasing enumeration. When doing this, we interpret $S$ as a total function with domain $|S|$ and with codomain given from context, i.e.\ writing $S\subseteq d$ indicates that the codomain is $d$. If $S\subseteq d$, $\cA$ is a class of finite $\cL_d$-structures, and keeping in mind the convention before Definition~\ref{Def:Maps_and_Gluings}, then we can write $\cA{\cdot}S$, $\rho\circ S$, etc. We write $\cA^i$ for $\cA{\cdot}\{i\}$. We write $\rm{id}_{S, d}\colon d\rightharpoonup d$ for the partial function with domain $S$ which is the identity on $S$. 
    \end{notation}

\begin{prop}
    \label{Prop:Manipulating_Classes}
    Let $d< \omega$, and fix a sort $\rho\colon d\to \sfU$ and $\cA\in P(\rho)$.
    \begin{enumerate}
        \item
        If $d_0< \omega$ and $e\colon d_0\to d$ is a total function, then $\cA{\cdot}e\in P(\rho\circ e)$.
        \item
        If $f\colon d\rightharpoonup d_0$ is a partial function, $\theta\colon d_0\to \sfU$ is a sort satisfying $\theta\circ f = \rho|_{\dom(f)}$, and $\cB\in P(\theta)$, then $\cA\cap \cB{\cdot}f\in P(\rho)$.
    \end{enumerate}
\end{prop}

\begin{proof}
    We first note that both $\cA{\cdot}e$ and $\cA\cap \cB{\cdot}f$ contain all unaries since $\cA$ and $\cB$ do.
    So let $\cA = \cK\cdot\gamma_\cA$ and $\cB = \cK\cdot\gamma_\cB$ with $\gamma_\cA = (\bX_\cA, \rho, \eta_\cA)$ and $\gamma_\cB = (\bX_\cB, \rho\circ e, \eta_\cB)$. We can assume that the underlying sets $X_\cA$ and $X_\cB$ are disjoint.
    
    To see that $\cA{\cdot}e\in P(\rho\circ e)$, we can define the gluing $\gamma_\cA{\cdot}e = (\bX_\cA, \rho\circ e, \eta_\cA{\cdot}e)$, where given $c< d_0$ and $x\in X_\cA$, we have $(\eta_\cA{\cdot}e)(c, x) = \eta_\cA(e(c), x)$. It is routine to check that $\gamma_\cA{\cdot}e$ is a gluing with $\cK{\cdot} (\gamma_{\cA}{\cdot}e) = (\cK{\cdot}\gamma_\cA){\cdot}e = \cA{\cdot}e$.

    To see that $\cA\cap \cB{\cdot}f\in P(\rho)$, first form the gluing $\gamma_\cB{\cdot}f = (\bX_\cB, \rho, \eta_\cB{\cdot}f)$, where given $c< d$ and $x\in X_\cB$, we have 
    \begin{align*}
        (\eta_{\cB}{\cdot}f)(c, x) = \begin{cases}
        0 \quad &\text{if } c\not\in \dom(f)\\[1 mm]
        \eta_\cB(f(c), x) \quad &\text{if } c\in \dom(f).
    \end{cases}
    \end{align*}
    Letting $\gamma = \gamma_\cA\cup \gamma_\cB{\cdot}f$ denote the union gluing, it is routine to check that $\cK{\cdot}\gamma = \cA\cap \cB{\cdot}f$.
\end{proof}

\begin{defin}
    \label{Def:Consecutive}
    Fix a sort $\rho\colon d\to \sfU$. We say that $\cA\supseteq \cB\in P(\rho)$ are \emph{consecutive} if there is no $\cC\in P(\rho)$ with $\cA\supsetneq \cC\supsetneq \cB$. Write $\con(\rho) = \{(\cA, \cB)\in P(\rho)^2: \cA\supseteq \cB\text{ are consecutive}\}$. By Proposition~\ref{Prop:Manipulating_Classes}, if $(\cA, \cB)\in \con(\rho)$ and $S\subseteq d$, then either $(\cA{\cdot}S, \cB{\cdot}S)\in \con(\rho)$ or $\cA{\cdot}S = \cB{\cdot}S$. 
    
    We say that $(\cA, \cB)\in \con(\rho)$ is \emph{essential} on some $S\subseteq d$ if there is a minimal-under-inclusion $\bC\in \cA\setminus \cB$ such that $S = \{V^\bC(x): x\in C\}$. We say that $(\cA, \cB)\in \con(\rho)$ is \emph{essential} if it is essential on $d$, and we write $\econ(\rho) = \{(\cA, \cB)\in \con(\rho): (\cA, \cB) \text{ is essential}\}$.  
\end{defin}

\begin{fact}
    \label{Fact:Bound_Essential}
    By arguments very similar to those of Proposition~\ref{Prop:Finite_Posets}, such a $\bC$ must satisfy $|\bC|< \|\cF\|$, so in particular, $|S|<\|\cF\|$. 
\end{fact}

\begin{prop}
    \label{Prop:Consecutive}
    With notation as in Definition~\ref{Def:Consecutive}, if $(\cA, \cB)\in \con(\rho)$, then there is a unique $S\subseteq d$ on which $(\cA, \cB)$ is essential. Fixing this $S$, if $S'\subseteq d$ and $S\subseteq S'$, then $(\cA{\cdot}S', \cB{\cdot}S')\in \con(\rho\circ S')$, and if $S\not\subseteq S'$, we have $\cA{\cdot}S' = \cB{\cdot}S'$.
\end{prop}

\begin{proof}
    Towards a contradiction, suppose $(\cA, \cB)$ was essential on both $S_0\neq S_1\subseteq d$. Then for each $i< 2$, we have $\cA\supsetneq \cA\cap (\cB{\cdot}\rm{id}_{S_i,d}) \supseteq \cB$, so as $(\cA, \cB)\in \con(\rho)$, we have $\cB = \cA\cap(\cB{\cdot}\rm{id}_{S_i, d})$. In particular, if $\cA = \cK{\cdot}\gamma$ for some gluing $\gamma$, then as in the proof of Proposition~\ref{Prop:Manipulating_Classes}, we can for each $i<2$ write $\cB = \cK\cdot(\gamma\cup \delta_i)$, where $\delta_i = (\bX_i, \rho, \eta_i)$ is a gluing with the property that  $\eta_i(c, x) = 0$ whenever $c\not\in S_i$. 
    
    Now without loss of generality assume that $S_0\setminus S_1\neq \emptyset$, and let $\bC_0\in \cA\setminus \cB$ witness that $(\cA, \cB)$ is essential on $S_0$. However, considering the gluing $\gamma\cup \delta_1$, and since $\cK$ has free amalgamation, we see that if $\bC_0\not\in \cB = \cK(\gamma\cup \delta_1)$, then a proper substructure of $\bC_0$ (namely, the substructure induced on points with unary in $S_1$) must also fail to be in $\cB$, a contradiction.

    The remaining claims about $S'\subseteq d$ are straightforward.
\end{proof}

\subsection{Paths and path sorts}\label{sec:paths}

 Given $i< \omega$, let $\iota_i\colon 1\to \omega$ be the function with $\iota_i(0) = i$.  Sometimes the intended codomain of $\iota_i$ is some $d< \omega$ (for the purposes of forming the partial function $\iota_i^{-1})$, and this will be clear from context. If $i< \sfU$, then $\iota_i$ is also a sort, and we write $P_i$ for $P(\iota_i)$. 

\begin{defin}
    \label{Def:Paths}
     A \emph{path} through $P_i$ is any
     subset $\frak{p}\subseteq P_i$ linearly ordered by inclusion. Write $\path_i$ for the set of paths through $P_i$ and $\path := \bigsqcup_{i< \sfU} \path_i$. A \emph{maximal} path is maximal under inclusion; write $\sf{MP}_i$ for the set of maximal paths through $P_i$ and  $\sf{MP} := \bigsqcup_{i< \sfU}\sf{MP}_i$.   We equip $\sf{MP}$ with an arbitrary linear order $\leq_{\MP}$. We write $\sf{u}\colon \sf{MP}\to \sfU$ for the map sending $\frak{p}\in \sf{MP}$ to the $i< \sfU$ with $\frak{p}\in \sf{MP}_i$.
     
     A \emph{full} path is an initial segment of a maximal path; write $\sf{Full}_i$ for the set of full paths through $P_i$ and $\full := \bigsqcup_{i< \sfU} \full_i$. Given $\frak{p}\in \sf{Full}_i$, $\max(\frak{p})$ and $\min(\frak{p})$ denote the maximal and minimal members of $\frak{p}$ under inclusion. We note that $\min(\frak{p}) = \min(P_i) = \bigcap_{\cA\in P_i} \cA$ (Proposition~\ref{Prop:Intersections}). When $\frak{p}\in \MP_i$, then $\max(\frak{p}) = \max(P_i) = \cK{\cdot}\tilde{\iota}_i$, where we recall that $\tilde{\iota}_i$ is the gluing $(\emptyset, \iota_i, \emptyset)$.
     Up to bi-interpretation, $\cK{\cdot}\tilde{\iota}_i$ is the class of all structures in $\mathcal{K}$
     with all vertices having unary relation $i$.
     Given $\frak{p}\in \sf{Full}_i$, we write $\frak{p}'\in \sf{Full}_i$ for the full path $\frak{p}\setminus \{\max(\frak{p})\}$.  \qed  
\end{defin}

\begin{exa}
    \label{Exa:H_Ages_Paths}
    Let $\cH$ be as defined in Subsection~\ref{SubSec:Organization}. So $\sfU = 1$ and $k = 3$, where $1$ and $2$ represent blue and red edges, respectively. Then $P(1) = P_0$ has four elements, which up to bi-interpretability are:
    \begin{itemize}
        \item 
        $\cH$,
        \item 
        $\cH_r$, the class of finite, triangle-free graphs with all red edges,
        \item 
        $\cH_b$, the class of finite, triangle-free graphs with all blue edges,
        \item 
        $\cH_0$, the class of graphs with no edges.
    \end{itemize}
    The set $\MP = \MP_0$ has $2$ elements, the path $\frak{p}_r:= \{\c{H}_0, \c{H}_r, \cH\}$ and the path $\frak{p}_b:= \{\cH_0, \cH_b, \cH\}$. We declare that $\frak{p}_r\leq_{\MP} \frak{p}_b$.

    For $d> 1$, a typical element of $P(d)$ is described by a pair of $(d, 3)$-graph-age functions as in Example~\ref{Exa:LFreeGraphs}, one for red cliques and one for blue cliques. While the single unary in the class $\cH$ also is not free (Definition~\ref{Def:Free}), we postpone the description of which subset of $P(d)$ will be of interest until Example~\ref{Exa:HDiaries}. \qed
\end{exa}

It will be useful to work with a more general notion of sort.

\begin{defin}
    \label{Def:Path_Sorts}
     A \emph{path sort} is a function $\rho\colon d\to \sf{MP}$ for some $d< \omega$. We set 
     \begin{align*}
        \wt{\rho} &:= (\emptyset, \sf{u}\circ \rho, \emptyset)\in \rm{Glue}(\sf{u}\circ \rho)\\[1 mm]
        P(\rho) &:= \{\cA\in P(\sf{u}\circ \rho): \forall\, i< d \, (\cA^i\in \rho(i))\}. 
     \end{align*}
     Note that if two elements of $P(\rho)$ are consecutive in $P(\rho)$, then they are also consecutive in $P(\sf{u}\circ\rho)$. Thus we can unambiguously define $\con(\rho) = \con(\sf{u}\circ\rho)\cap P(\rho)^2$ and $\econ(\rho)= \econ(\sf{u}\circ \rho)\cap P(\rho)^2$. \qed
    \end{defin}

\section{Lower Bounds: Diaries}
\label{Sec:Diaries}

Given $\bA\in \cK$, recall that a finite coloring $\gamma$ of $\emb(\bA, \bK)$ is \emph{unavoidable} if for every $\eta\in \emb(\bK, \bK)$, we have $\im(\gamma\circ \eta) = \im(\gamma)$. In particular, $\rm{BRD}(\bA, \cK)\geq \ell$ iff there is an unavoidable coloring $\gamma\colon \emb(\bA, \bK)\to \ell$. This section will produce a finite relational language $\cL^*\supseteq \cL$ and an $\cL^*$-expansion $\bK^*$ of $\bK$ such that for every $\bA\in \cK$, the map $\chi_\bA\colon \emb(\bA, \bK)\to \bK^*(\bA)$ given by $\chi_\bA(f) = \bK^*{\cdot}f$ is unavoidable (Definition~\ref{Def:Expansion}), showing that $\rm{BRD}(\bA, \cK)\geq |\bK^*(\bA)|$. To do this, we introduce a method of coding structures embeddable into $\bK$ using a tree-like object called a \emph{diary}. 

\subsection{Conventions about trees}

Let $(L, \leq_L)$ be either $(\sfU, \leq)$ or $(\MP, \leq_{\MP})$. Given $t\in L\times k^{<\omega}$, we write $t = (t^\sf{p}, t^\sf{seq})$ with $t^\sf{p}\in L$ and $t^\sf{seq}\in k^{<\omega}$. If $L = \MP$, write $t^\sf{u}:= \sf{u}(t^\sf{p})$, and if $L = \sfU$, set $t^\sf{u} = t^\sf{p}$ (here $\sf{p}$ refers to ``path" and $\sf{u}$ refers to ``unary"). Write $\ell(t) = \dom(t^\sf{seq})$, which we call the \emph{level} of $t$, and given $m< \ell(t)$, we often abuse notation and write $t(m)$ for $t^\sf{seq}(m)$. Similarly abusing notation, if $q< k$, we write $t^\frown q$ for $(t^\sf{p}, (t^\sf{seq})^\frown q)$.

\begin{defin}
    \label{Def:Binary_tree_rels}
    We define the partial orders $\leq_\ell$, $\sqsubseteq$, and $\lex$ and the partial binary operation $\wedge$ on $L\times k^{<\omega}$; let $s, t\in L\times k^{<\omega}$.
    \begin{enumerate}
        \item 
        We write $s\leq_\ell t$ iff $\ell(s)\leq \ell(t)$.
        \item 
        If $m< \ell(s)$, we write $s|_m$ or $\pi_m(s)$ for the initial segment of $s$ at level $m$. We write $s\sqsubseteq t$ if $s^\sf{p} = t^\sf{p}$ and $s^\sf{seq} = t^\sf{seq}|_{\ell(s)}$.
        \item 
        We set $s\lex t$ iff $s^\sf{p}<_L t^\sf{p}$ or $s^\sf{p} = t^\sf{p}$ and $s^\sf{seq}\lex t^\sf{seq}$ (where the latter $\lex$ is the usual lexicographic order on $k^{<\omega}$).
        \item 
        If $s^\sf{p} = t^\sf{p}$, we write $s\wedge t$ for the largest common initial segment of $s$ and $t$. \qed
    \end{enumerate}
\end{defin}

A \emph{level subset} of $L\times k^{<\omega}$ is simply a subset of $L\times k^m$ for some $m< \omega$; given a level subset $T\subseteq L\times k^{<\omega}$, write $\ell(\Delta)$ for the common level of every $t\in T$. If $S = \{s_0\lex\cdots\lex s_{d-1}\}\subseteq L\times k^{<\omega}$ is a level subset, we let $\sort(S)\colon d\to L$ be the sort or path sort (depending on $L$) given by $\sort(S)(i) = s_i^\sf{p}$. If $X\subseteq L\times k^{<\omega}$, then the $\sqsubseteq$-downwards closure of $X$ is denoted by $X{\downarrow}$.

A \emph{subtree} $\Delta\subseteq L\times k^{<\omega}$ is any $\sqsubseteq$-downwards-closed subset; thus necessarily subtrees are $\wedge$-closed as well.   The \emph{height} of $\Delta$ is $\rm{ht}(\Delta):= \{\ell(t): t\in \Delta\}\leq \omega$. Given $m< \rm{ht}(\Delta)$, we put $\Delta(m) = \{t\in \Delta: \ell(t) = m\}$. Given $x\in L$, we write $\Delta_x = \{t\in \Delta: t^\sf{p} = x\}$ and $\Delta_x(m) = \Delta_x\cap \Delta(m)$, and given $i< \sfU$, write $\Delta_i = \{t\in \Delta: t^\sf{u} = i\}$ and $\Delta_i(m) = \Delta_i\cap \Delta(m)$. Given $m< n< \rm{ht}(\Delta)$ and $S\subseteq \Delta(m)$, we put $\succ_\Delta(S, n) = \{t\in \Delta(n): t\sqsupset s\text{ for some }s\in S\}$; if $n = m+1$, we write $\is_\Delta(S)$ for $\succ_\Delta(S, m+1)$;  these are the \emph{immediate successors} of $S$ in $\Delta$. When $S = \{s\}$, we simply write $\succ(s, n)$ or $\is_\Delta(s)$. A \emph{splitting node} of $\Delta$ is any $t\in \Delta$ with $|\is_\Delta(t)|> 1$; write $\spnd(\Delta) = \{t\in \Delta: t\text{ a splitting node of }\Delta\}$. Given $s\in \Delta$ and $n> \ell(s)$, we put $\Left_\Delta(s, n)$ to be the $\lex$-least $t\in \Delta(n)$ with $t\sqsupset s$. Similarly, if $S\subseteq \Delta$ and $n> \ell(s)$ for each $s\in S$, we can write $\Left_\Delta(S, n) = \{\Left_\Delta(s, n): s\in S\}$. If the $\Delta$ subscript is omitted in various notation, we intend $\Delta = L\times k^{<\omega}$.

\begin{defin}
    \label{Def:Aged_Coding_Tree}
An \emph{aged coding tree} is a subtree $\Delta\subseteq L\times k^{<\omega}$ where additionally:
\begin{itemize}
    \item 
    We designate a subset $\cdnd(\Delta)\subseteq \Delta$ of \emph{coding nodes} of $\Delta$ with $|\cdnd(\Delta)\cap \Delta(m)|\leq 1$ for every $m< \rm{ht}(\Delta)$. We write $c^\Delta\colon |\cdnd(\Delta)|\to \cdnd(\Delta)$ for the function which enumerates the coding nodes in increasing height and $\ell^\Delta(n)$ for $\ell(c^\Delta(n))$.
    \item 
    We assign to each $m< \rm{ht}(\Delta)$ a class of $\cL_{|\Delta(m)|}$-structures in $P(\sort(\Delta(m)))$ which we denote by $\age_\Delta(m)$. 
\end{itemize} 
If $S\subseteq \Delta(m) = \{s_0\lex\cdots\lex s_{d-1}\}$ and $I = \{i< d: s_i\in S\}$, we can write $\age_\Delta(S)$ for $\age_\Delta(m){\cdot}I$, and if $d_0<\omega$ and $e\colon d_0\to \Delta(m)$ is any function, we define $\age_\Delta(e) = \age_\Delta(m){\cdot}\ol{e}$, where $\ol{e}$ is defined to satisfy $e(i) = s_{\ol{e}(i)}$. If $s\in \Delta$, we write $\age_\Delta(s)$ for $\age_\Delta(\{s\})$. We remark that $\age_\Delta(s)\in s^\sf{p}$. We let $\argpath_\Delta(s) = \{\age_\Delta(s|_m): m\leq \ell(s)\}$; for the aged coding trees we consider in this paper, we will always have $\argpath_\Delta(s)\in \path_i$. \qed
\end{defin}

\begin{defin}
    \label{Def:Crit}
Given an aged coding tree $\Delta$ and $X\subseteq \Delta$, we define the  set $\crit^\Delta(X)$ of \emph{critical levels} of $X$ in $\Delta$ by declaring that $m\in \crit^\Delta(X)$ iff any of the following happen:
\begin{itemize}
    \item 
    There is $x\in X$ with $m = \ell(x)$.
    \item 
    There are $x, y\in X$ with $m = \ell(x\wedge y)$.
    \item 
    $\age_\Delta(\pi_m[X]) \neq \age_\Delta(\pi_{m+1}[X])$.
\end{itemize}
Note that the above events need not be mutually exclusive. \qed
\end{defin}

\begin{defin}
    \label{Def:Aged_Emb}
Given aged coding trees $\Theta, \Delta\subseteq L\times k^{<\omega}$ (the same $L$ for both $\Theta$ and $\Delta$), an \emph{aged embedding} $\phi\colon \Theta\to \Delta$ is an injection satisfying all of the following.
\begin{enumerate}
    \item 
    $\phi$ preserves $\sqsubseteq$, $\wedge$, $\lex$, and $\leq_\ell$ (in both the positive and negative sense). Write $\wt{\phi}\colon \rm{ht}(\Theta)\to \rm{ht}(\Delta)$ for the function satisfying $\phi[\Theta(m)]\subseteq \Delta(\wt{\phi}(m))$.
    \item 
    $\phi[\cdnd(\Theta)]\subseteq \cdnd(\Delta)$.
    \item 
    For each $x\in L$, $\phi[\Theta_x] \subseteq \Delta_x$.
    \item 
    For each $t\in \Theta$ and $m< \ell(t)$, we have $t(m) = \phi(t)(\wt{\phi}(m))$.
    \item 
    For each $m< \rm{ht}(\Theta)$, $\age_\Theta(m) = \age_\Delta(\phi[\Theta(m)])$.
\end{enumerate}
Write $\aemb(\Theta, \Delta)$ for the set of aged embeddings from $\Theta$ to $\Delta$. \qed
\end{defin}

In this paper, we work with two types of aged coding trees. First, we recall that an \emph{enumerated structure} is simply a structure $\bA$ with $A = |A|$. The following is a mild modification of Definition 2.1 from \cite{Zuc22}.

\begin{defin}
    \label{Def:Coding_Tree}
Given an enumerated structure $\bA\leq \bK$, we define $c^\bA\colon A\to \sfU\times k^{<\omega}$ via $c^\bA(n) = (U^\bA(n), \la R^\bA(n, 0),..., R^\bA(n, n-1)\ra)$. We define the \emph{coding tree of $\bA$} to be $\ct^\bA := \im(c^\bA){\downarrow}$, and we set $\cdnd(\ct^\bA) = \im(c^\bA)$. Hence $c^\bA = c^{\ct^\bA}$, though we always write the former. To assign each level an age, consider some $n< A$, and write $\ct^\bA(n) = \{s_0\lex\cdots\lex s_{d-1}\}$. Consider the rank $d$ gluing $\gamma = (\bA|_n, \rho, \eta)$, where given $i< d$, we have $\rho(i) = s_i^\sf{p}$, and given $i< d$ and $m< n$, we have $\eta(i, m) = s_i(m)$. Note that $\gamma$ admits all unaries since $s_i\in \im(c^\bA){\downarrow}$ for each $i< d$. We set $\age_{\ct^\bA}(n) := \age_\bA(n)= \cK{\cdot}\gamma\in P(\sort(\ct^\bA(m)))$. More generally, we write $\succ_\bA(S, n)$, $\is_\bA(S)$, etc., though we note that always $\Left_\bA(S, n) = \Left(S, n)$ for $n< A$. In addition to the notational conventions for general aged coding trees, in the case of $\ct^\bA$, we define $\age_\bA(e)$ for any $e\colon d_0\to \ct^\bA$ (even when the image of $e$ is not a level set) by choosing some $n\geq \max(\{\ell(e(i)): i< d_0\})$ and setting $\age_\bA(e) = \age_\bA(\Left(e(-), n))$. \qed
\end{defin}

We differ slightly from the definition of $\ct^\bA$ given in \cite{Zuc22} in two ways. First, we now define $\ct^\bA$ to be an aged coding tree rather than just the coding function. Second, instead of recording the unary predicates just at the coding nodes, we now separate out this information at the very start by working with $\sfU\times k^{<\omega}$ instead of just $k^{<\omega}$.   

The second type of aged coding tree we will consider are \emph{diaries}, whose definition we develop in this section. As trees, diaries will be subtrees of $\MP\times k^{<\omega}$. In contrast to $\ct^\bA$, where the age of each level was completely determined by the placement of the coding nodes, we have some freedom with the age that we assign to each level of a diary. However, since we want these ages to say something meaningful about the structure we are coding (see Proposition~\ref{Prop:Diary_Correct_Age}), we will need the assignment of ages to follow some rules which more-or-less assert that our assignment is compatible with the coding and splitting of the diary and with the class $\cK$. These rules also ensure that diaries are the correct objects for encoding exact big Ramsey degrees.

\subsection{Controlled coding}

\begin{defin}
    \label{Def:CodingClass}
    Given $j< d$, a function $\phi\colon (d-1)\to k$, and $\bB$ an $\cL_{d-1}$-structure, we define $\rm{Add}_{j, \phi}(\bB)$ an $\cL_d$-structure with underlying set $B\cup \{x\}$, where $x\not\in B$ is some new point. On $B$, $\rm{Add}_{j, \phi}(\bB)$ induces the structure $(d{\setminus}\{j\}){\cdot}\bB$. As for the new point $x$, we set $V^{\rm{Add}_{j, \phi}(\bB)}(x) = j$ and given $b\in B$, we set $R^{\rm{Add}_{j, \phi}(\bB)}(b, x) = \phi(V^\bB(b))$. In words, $\rm{Add}_{j, \phi}(\mathbf{B})$ is the $\mathcal{L}_d$-structure obtained from $\bB$ by shifting the $V$ values from $d-1$ to $d\setminus \{j\}$ via 
    increasing bijection, preserving the binary relations on pairs of vertices in $B$, giving one new vertex $x$ the $V$-value $j$, and letting 
    $\phi$ determine
    the binary relations between $x$ and the vertices in $B$. Thus $\rm{Add}_{j, \phi}(\bB)$ contains exactly one point with unary $j$, namely $x$.

    Given a class $\cA$ of $\cL_d$-structures, we define 
    $$\cA{\cdot}\rm{Add}_{j, \phi} = \{\bB\in \fin(\cL_{d-1}): \rm{Add}_{j, \phi}(\bB)\in \cA\}.$$ 
    With our convention identifying classes of structures with their characteristic function, we have $(\cA{\cdot}\rm{Add}_{j, \phi})(\bB) = \cA(\rm{Add}_{j, \phi}(\bB))$. \qed
\end{defin}

We note that if $\bA\leq \bK$ is an enumerated structure, $S = \{s_0,...,s_{d-1}\}\subseteq \ct^\bA(m)$ satisfies $\age_\bA(S) = \cA$, $s_j = c^\bA(m)$, and $T = \{t_0,..., t_{d-1}\}\subseteq \ct^\bA(m+1)$ is such that $t_i = (s_{(d{\setminus}\{j\})(i)})^\frown \phi(i)$ for each $i< d-1$, then $\age_\bA(T) = \cA{\cdot}\rm{Add}_{j, \phi}$. 
    
More generally, if $\cA = \cK{\cdot}\gamma$ for some gluing $\gamma$, then there is also a gluing $\gamma'$ such that $\cA{\cdot}\rm{Add}_{j, \phi} = \cK{\cdot}\gamma'$.

\begin{defin}
\label{Def:ControlledCodingTriple}   
    Given a path sort $\rho\colon d\to \sf{MP}$, $\cA\in P(\rho)$, $j< d$, and $\phi\colon (d-1)\to k$, we call $(\cA, j, \phi)$ a \emph{controlled coding triple} if the following all hold.
    \begin{enumerate}
        \item 
        $\cA{\cdot}\rm{Add}_{j, \phi} = \cA{\cdot}(d{\setminus}\{j\})$.
        \item
        Writing $i = \sf{u}\circ \rho(j) < \sfU$, we have $\cA^j = \min(P_i)$.
        \item
        If $\cB\in P(\rho)$ satisfies $\cB\subseteq \cA$ and $\cB{\cdot}(d{\setminus}\{j\}) = \cB{\cdot}\rm{Add}_{j, \phi} = \cA{\cdot}(d{\setminus}\{j\})$, then $\cB = \cA$.
    \end{enumerate}

    Call $(\c{A}, j)$ a \emph{controlled coding pair} if there is some function $\phi\colon (d-1)\to k$ making $(\c{A}, j, \phi)$ a controlled coding triple. \qed
\end{defin}

\begin{exa}
    \label{Exa:LFreeCoding}
    Consider the class $\c{G}_\ell$ of $K_\ell$-free finite graphs. Fix $d< \omega$, $z$ a $(d,\ell)$-graph-age function (see Example~\ref{Exa:LFreeGraphs}), $j< d$ and $\phi\colon (d-1)\to 2$. In order to understand when $(\c{G}_z, j, \phi)$ is a controlled coding triple, we first need to understand when $\c{G}_z{\cdot}\rm{Add}_{j, \phi} = (\c{G}_z){\cdot}(d{\setminus\{j\}})$. Writing $X = (d{\setminus}\{j\})[\phi^{-1}(\{1\})]$, one can show that this happens exactly when both of the following occur:
    \begin{enumerate}
        \item 
        For every $T\in \binom{X}{{<}\ell}$, we have $z(T)< \ell-1$,
        \item
        For every $T\in \binom{X}{{<}(\ell-1)}$, we have $z(T\cup \{j\}) = z(T)+1$.
    \end{enumerate}
    Now suppose $(\c{G}_z, j, \phi)$ satisfies the above two items. When is it a controlled coding triple? First, we need $z(\{j\}) = 1$ so that item $1$ of Definition~\ref{Def:ControlledCodingTriple} holds. Aside from this, we need to enforce item $2$ by ensuring that $z$ is ``as small as possible." Given $S\in \binom{d}{{<}\ell}$, call $S$ \emph{determined} if either $S\subseteq d\setminus \{j\}$ or $S\subseteq X\cup \{j\}$; we demand for every $T\in \binom{d}{<{\ell}}$ that $z(T) = \max\{z(S): S\subseteq T \text{ is determined}\}$. 
    
    In summary, $(\c{G}_z, j)$ is a controlled coding pair iff all of the following hold: 
    \begin{enumerate}
        \item 
        $z(\{j\}) = 1$,
        \item
        Writing $X = \{\alpha\in d\setminus \{j\}: z(\{\alpha, j\}) = z(\{\alpha\})+1\}$, then for every $T\in \binom{X}{{<}\ell}$, we have $z(T)< \ell-1$, and for every $T\in \binom{X}{{<}(\ell-1)}$, we have $z(T\cup \{j\}) = z(T)+1$,
        \item
        With the same notion of ``determined" as above, we have for every $T\in \binom{d}{<{\ell}}$ that $z(T) = \max\{z(S): S\subseteq T \text{ is determined}\}$. 
    \end{enumerate}
    It turns out that whenever $(\c{G}_z, j)$ is a controlled coding pair, then there is a unique $\phi$ making $(\c{G}_z, j, \phi)$ a controlled coding triple; $\phi$ is defined for $\alpha < (d-1)$ by setting $\phi(\alpha) = 1$ iff $(d{\setminus}\{j\})(\alpha)\in X$. \qed
\end{exa}

We will discuss the motivation for controlled coding triples after stating Theorem~\ref{Thm:EmbDiary}. We first show that they exist in abundance. 

\begin{lemma}
    \label{Lem:Controlled_Coding_triples}
    Given a path sort $\rho\colon d\to \sf{MP}$, $\cA\in P(\rho)$, $j< d$, and $\phi\colon (d-1)\to k$, then if $\cA{\cdot}\rm{Add}_{j, \phi}$ contains all unaries, then there is a unique $\cB\in P(\rho)$ with $\cB\subseteq \cA$, $\cB{\cdot}\rm{Add}_{j, \phi} = \cA{\cdot}\rm{Add}_{j, \phi}$, and with $(\cB, j, \phi)$ a controlled coding triple. 
\end{lemma}

We denote this unique $\cB$ by $\la \cA, j, \phi\ra$.

\begin{proof}
    Write $\cC = \cA{\cdot}\rm{Add}_{j, \phi}$. We let 
    $$\c{B} = \bigcap \{\c{D}\in P(\rho): \cD{\cdot}\rm{Add}_{j, \phi} = \cC\}.$$
    We have $\c{B}\in P(\rho)$ by Proposition~\ref{Prop:Finite_Posets}, and we also have $\c{B}{\cdot}\rm{Add}_{j, \phi} = \cC$. We also see that for any $\c{B}'\in P(\rho)$ with $\c{B}'\subsetneq\c{B}$, we must have $\c{B}'{\cdot}\rm{Add}_{j, \phi}\subsetneq \cC$, showing that item $3$ of Definition~\ref{Def:ControlledCodingTriple} holds. To see that items $1$ and $2$ hold, write $i = \sf{u}\circ \rho(j)$, and consider the classes $\c{A}\cap \min(P_i){\cdot}\iota_j^{-1}$ and $\c{A}\cap \cC{\cdot}(d{\setminus}\{j\})^{-1}$. Letting $\c{D}$ denote either of these classes, we have that $\c{D}\in P(\rho)$ by Proposition~\ref{Prop:Manipulating_Classes} (in the second case, this uses that $\cC$ contains all unaries as well as the observation immediately before Definition~\ref{Def:ControlledCodingTriple}), and it is straightforward to show that $\cD{\cdot}\rm{Add}_{j, \phi} = \cC$. This implies that $\c{B}^j = \min(P_i)$ and $\c{B}{\cdot}(d{\setminus}\{j\}) = \cC$, showing that items $1$ and $2$ hold.
\end{proof}

Next, we show that restricting a controlled coding triple to a subset of coordinates gives rise to another controlled coding triple.

\begin{lemma}
    \label{Lem:Controlled_Coding_Restriction}
    Let $\rho\colon d\to \MP$ be a path sort, $\cA\in P(\rho)$, $j< d$ and $\phi\colon (d-1)\to k$ be such that $(\cA, j, \phi)$ is a controlled coding triple. Let $e\colon d_0\to d$ be an injection and $j_0< d_0$ satisfy $e(j_0) = j$. Write $\sigma\colon (d_0-1)\to (d-1)$ for the injection satisfying $e\circ (d_0{\setminus}\{j_0\}) = (d{\setminus}\{j\})\circ \sigma$. Then $(\cA{\cdot}e, j_0, \phi\circ \sigma)$ is a controlled coding triple.
\end{lemma}

\begin{proof}
   We verify the three items of Definition~\ref{Def:ControlledCodingTriple}. 
    
   For item $1$, we have:
    \begin{align*}
        (\c{A}{\cdot}e){\cdot}(d_0{\setminus}\{j_0\}) &= (\c{A}{\cdot}(d{\setminus}\{j\})){\cdot}\sigma\\
        &= (\cA{\cdot}\rm{Add}_{j, \phi}){\cdot}\sigma\\
        &= (\c{A}{\cdot}e)(j_0, \phi\circ \sigma).
    \end{align*}
    
     For item $2$, writing $i = \sf{u}\circ \rho(j)$, we have $\c{A}^j = \min(P_i)$, so $(\c{A}{\cdot}e)^{j_0} = \min(P_i)$ as well.

    For item $3$, suppose $\c{B}\subseteq \c{A}{\cdot}e$ satisfies $\c{B}{\cdot}(d_0{\setminus}\{j_0\}) = \c{B}(j_0, \phi\circ \sigma) = (\c{A}{\cdot}e)(j_0, \phi\circ \sigma)$. We need to show that $\c{B} = \c{A}{\cdot}e$. To that end, form the class $\c{C} := \c{A}\cap \cB{\cdot}e^{-1}$. It is routine to check that $\c{C}{\cdot}(d{\setminus}\{j\}) = \c{A}{\cdot}(d{\setminus}\{j\})$. To check that $\c{C}{\cdot}\rm{Add}_{j, \phi} = \cA{\cdot}\rm{Add}_{j, \phi}$, the left-to-right inclusion is clear. For the right-to-left inclusion, fix $\b{D}\in \cA{\cdot}\rm{Add}_{j, \phi}$. This means that $\rm{Add}_{j, \phi}(\b{D})\in \c{A}$, and we want to show that it is also in $\c{C}$. To that end, we note that $e^{-1}{\cdot}(\rm{Add}_{j, \phi}(\b{D})) = (j_0, \phi\circ \sigma)(\sigma^{-1}{\cdot}\bD)$. Then $\sigma^{-1}{\cdot}\b{D}\in (\c{A}{\cdot}e)(j_0, \phi\circ \sigma) = \c{B}(j_0, \phi\circ \sigma)$. So $e^{-1}{\cdot}(\rm{Add}_{j, \phi}(\b{D})) = (j_0, \phi\circ \sigma)(\sigma^{-1}{\cdot}(\bD))\in \c{B}$, which means that $\rm{Add}_{j, \phi}(\b{D})\in \c{C}$. Because $(\c{A}, j, \phi)$ is a controlled coding triple, we must have $\c{A} = \c{C}$, implying that $\c{A}{\cdot}e = \c{B}$.    
\end{proof}

\subsection{Abstract splitting events}

Before proceeding further, it will help to assume that every unary predicate in $\cL$ interacts with suitably interesting members of $\cK$.

\begin{defin}
    \label{Def:NonDegenerate}
    Fix $i< \sfU$. We call $i$ \index{non-degenerate}\emph{non-degenerate for $\c{K}$} if there is some $\bA\in \c{K}$ with $A = \{a, b\}$, $U^\bA(a) = i$, and $R^\bA(b, a)\neq 0$. Otherwise, we call $i$ \emph{degenerate}. \qed
\end{defin}

A good example to keep in mind is the class $\cK$ of finite sets, where the single unary is degenerate. Degenerate unaries will behave rather differently than non-degenerate ones, but will be easy to deal with at the very end. So \textbf{until Subsection~\ref{Subsec:Degenerates}, we will assume that every $i< \sfU$ is non-degenerate for $\c{K}$}. 

Given a path sort $\rho\colon d\to \sf{MP}$, a class $\cA\in P(\rho)$, and some $i <  d$, we discuss an abstract notion of what it should mean to ``split" the class $\cA$ at position $i$. This is the operation that will occur at ``splitting levels" of diaries. First, let $\sp(d, i)\colon (d+1)\to d$ be the unique non-decreasing surjection with $\sp(d, i)^{-1}(\{i\}) = \{i, i+1\}$. If $\sigma$ is any function with domain $d$, we can write $\sp(\sigma, i) := \sigma\circ \sp(d, i)$. In particular, we obtain a new sort $\sp(\rho, i)$. 

Our goal is to take $\cA\in P(\rho)$ and form $\sp(\cA, i)$ (read ``split $\cA$ at $i$"). The definition of $\sp(\cA, i)$ will depend on whether or not $\sf{u}\circ \rho(i)$ is a \emph{free} or $\emph{non-free}$ unary. 

\begin{defin}
    \label{Def:Free}
    Fix $i< \sfU$. We say that $i$ is \emph{free} if there are $j< \sfU$ and $1\leq q< k$ such that, letting $\gamma_{j, i, q} = (\bX, \iota_i, \eta)$ denote the gluing with $X = \{0\}$, $U^\bX(0) = j$, and $\eta(0, 0) = q$, then $\cK{\cdot}\gamma_{j, i, q} = \cK{\cdot}\tilde{\iota}_i = \max(P_i)$. We call such $(j, q)$ a \emph{free pair} for $i$.

    Otherwise, we call $i$ a \emph{non-free} unary. Let $\sfU_{fr}\subseteq \sfU$ denote the set of free unaries and $\sf{U}_{non} = \sfU\setminus \sf{U}_{fr}$ the set of non-free unary predicates. 

    We call $\frak{p}\in \sf{MP}$ a \emph{free path} or a \emph{non-free path} depending on whether $\sf{u}(\frak{p})$ is free or non-free. Write $\sf{MP}_{fr}$ for the free paths and $\sf{MP}_{non}$ for the non-free paths. \qed
\end{defin}

\begin{exa}
    For $\ell\geq 3$, the single unary $i = 0$ for the class $\c{G}_\ell$ of finite $K_\ell$-free graphs (Example~\ref{Exa:LFreeGraphs}) is non-free. The only choice for $(j, q)$ would be $(0, 1)$, but the class $\c{K}{\cdot}\gamma_{0, 0, 1}$ is bi-interpretable with $\c{G}_{\ell-1}$, the class of finite graphs which forbid $\cK_{\ell-1}$. 
    
    The single unary $i = 0$ in the class $\cG_\bT$ of finite directed graphs forbidding cyclic triangles (Example~\ref{Exa:TFreeAges}) is free, as witnessed by $(j, q) = (0, 1)$ or $(0, 2)$. \qed 
\end{exa}

\begin{fact}
    \label{Fact:NonFree_Unaries}
    For $i\in \sfU_{non}$, while we cannot find $j< \sfU$ and $1\leq q< k$ with $\cK{\cdot}\gamma_{j, i, q} = \max(P_i)$, we do have the next best thing: If $\frak{p}\in \MP_i$, then there are $j< \sfU$ and $1\leq q< k$ so that $\cK{\cdot}\gamma_{j, i, q} = \max(\frak{p}')$. \qed 
\end{fact}

\begin{defin}
    \label{Def:SplittingClass}
    With notation as above, $\sp(\cA, i)\in P(\sp(\rho, i))$ is defined as follows.
    \begin{enumerate}
        \item 
        If either $\rho(i)\in \sf{MP}_{fr}$ or $\cA^i\neq \max(\frak{p})$, we set $\sp(\cA, i) = \cA{\cdot}\sp(d, i)$.
        \item
        If $\rho(i)\in \sf{MP}_{non}$ and $\cA^i = \max(\frak{p})$, we set
        $\sp(\cA, i) = \cA{\cdot}\sp(d, i)\cap \max(\frak{p}'){\cdot}\iota_{i+1}^{-1}$. \qed
    \end{enumerate}
\end{defin}
In case $2$, notice that $\sp(\cA, i)^{i+1} = \max(\frak{p}')$. We remark that this is one of the main ways that the difference between free and non-free paths will arise. 

The difference between these two cases is motivated by what happens in coding trees. If $\bA\leq \bK$ is an enumerated structure, $i\in \sfU_{non}$, and $v = (i, 0^n)\in \ct^\bA$, then $\age_\bA(v) = \cK{\cdot}\tilde{\iota}_i$, but for any $0\neq q < k$ with $v^\frown q \in \ct^\bA$, we must have $\age_\bA(v^\frown q) \subsetneq \cK{\cdot}\tilde{\iota}_i$. Hence starting from $v$, splitting and going right must result in an age change. As diaries are defined to mimic certain properties of coding trees, we must incorporate this feature into our definition of a diary.

Let us clarify one aspect of case $2$, which we phrase a touch more generally.

\begin{lemma}
    \label{Lem:Split_Consecutive}
    Let $\rho\colon d\to \MP$ be a path sort, fix $i< d$, and suppose $\rho(i) = \frak{p}\in \MP_{non}$. Then if $\cB\in P(\rho)$ is such that $\cB^i = \max(\frak{p})$, then $(\cB, \cB\cap \max(\frak{p}'){\cdot}\iota_i^{-1})\in \con(\rho)$. 
\end{lemma}

\begin{proof}
    Write $\cD = \cB\cap \max(\frak{p}'){\cdot}\iota_i^{-1}$. Suppose $\cC\in P(\rho)$ satisfied $\cD\subseteq \cC\subseteq \cB$. We first observe that since $\cD{\cdot}(d{\setminus}\{i\}) = \cB{\cdot}(d{\setminus}\{i\})$, we have $\cC{\cdot}(d{\setminus}\{i\}) = \cB{\cdot}(d{\setminus}\{i\})$. Write $\cC = \cK{\cdot}\gamma$ for some gluing $\gamma = (\bX, \rho, \eta)$. If $\eta(i, x)\neq 0$ for some $x\in X$, then as $\frak{p}\in \MP_{non}$, we would have $\cC^i\subsetneq \max(\frak{p})$, implying that $\cC^i\subseteq \max(\frak{p}')$, and hence $\cC = \cD$. However, if $\eta(i, x) = 0$ for every $x\in X$, then free amalgamation in $\cK$ along with our first observation imply that $\cC = \cB$. 
\end{proof}

We conclude the subsection by discussing what happens upon restricting our attention to a subset of $d+1$. 

\begin{prop}
    \label{Prop:Split_restrict}
    With notation as in Definition~\ref{Def:SplittingClass}, fix $d_0< \omega$ and an increasing injection $e\colon d_0\to (d+1)$. 
    \begin{enumerate}
        \item 
        If $\{i, i+1\}\subseteq \im(e)$, then letting $i_0< d_0$ be such that $e(i_0) = i$, and letting $e_0\colon (d_0-1)\to d$ be the map with $e_0\circ \sp(d_0-1, i_0) = \sp(d, i)\circ e$, we have $\sp(\cA, i){\cdot}e = \sp(\cA{\cdot}e_0, i_0)$.
        \item 
        If $\{i, i+1\}\not\subseteq \im(e)$, then if either $i+1\not\in \im(e)$ or if $i+1\in \im(e)$ and we are in case $1$ of Definition~\ref{Def:SplittingClass}, then $\sp(\cA, i){\cdot}e = \cA{\cdot}(\sp(i, d)\circ e)$.
        \item 
        If $i\not\in \im(e)$, $i+1\in\im(e)$, and we are in case $2$ of Definition~\ref{Def:SplittingClass}, then if $i_0< d_0$ satisfies $e(i_0) = i+1$, then $\sp(\cA, i){\cdot}e = \cA{\cdot}(\sp(i, d)\circ e)\cap \max(\frak{p}'){\cdot}\iota_{i_0}^{-1}$. In particular, by Lemma~\ref{Lem:Split_Consecutive} we have $(\cA{\cdot}(\sp(d, i)\circ e), \sp(\cA, i){\cdot}e)\in \con(\sp(\rho, i)\circ e)$.
    \end{enumerate}
\end{prop}

\begin{proof}
    All three items are relatively straightforward checking of the various cases involved. We omit the details.
\end{proof}

\subsection{Diaries and their embeddings}

\begin{defin}
    \label{Def:Diary}    
    A \emph{diary} is an aged coding tree $\Delta\subseteq \sf{MP}\times k^{<\omega}$ satisfying the following.
    \begin{enumerate}
        \item 
        Every non-empty level of $\Delta(m)$ contains at most one node $t$ with $|\is_\Delta(t)|\neq 1$. 
        \begin{itemize}
            \item
            If there is $t\in \Delta(m)$ with $\is_\Delta(t) = \emptyset$, we call $m$ a \emph{coding level} of $\Delta$.
            \item
            If there is $t\in \Delta(m)$ with $|\is_\Delta(t)|> 1$, then $|\is_\Delta(t)| = 2$, and we call $m$ a \emph{splitting level} of $\Delta$.
            \item
            If every $t\in \Delta(m)$ satisfies $|\is_\Delta(t)| = 1$, we call $m$ an \emph{age-change} level of $\Delta$.
        \end{itemize}
        We write $\sp(\Delta)$, $\cd(\Delta)$, and $\ac(\Delta)$ for the splitting, coding, and age-change levels of $\Delta$, respectively. We let $\cdnd(\Delta)\subseteq \Delta$ be the set of terminal nodes of $\Delta$. 
        \item
        $\cdnd(\Delta)$ is $\sqsubseteq$-upwards cofinal in $\Delta$.

        \item
        Writing $\rho = \sort(\Delta(0))$, we have $\age_\Delta(0) = \cK{\cdot}\tilde{\rho} = \max(P(\rho))$.
        \item
        If $\Delta(m) = \{t_0\lex\cdots\lex t_{d-1}\}$ and $t_i$ is the splitting node, then we have $\Delta(m+1) = \{t_j^\frown 0: j< d\}\cup \{t_i^\frown 1\}$ and $\age_\Delta(m+1) = \sp(\age_\Delta(m), i)$ (Definition~\ref{Def:SplittingClass}).
        \item
        If $\Delta(m) = \{t_0\lex\cdots\lex t_{d-1}\}$ and $t_j$ is a coding node, then writing $\Delta(m+1) = \{u_0\lex\cdots\lex u_{d-2}\}$ and defining $\phi\colon (d-1)\to k$ via $\phi(i) = u_i(m)$, then $(\age_\Delta(m), j, \phi)$ is a controlled coding triple (Definition~\ref{Def:ControlledCodingTriple}) and $\age_\Delta(m+1) = \age_\Delta(m){\cdot}\rm{Add}_{j, \phi} = \age_\Delta(m){\cdot}(d{\setminus}\{j\})$.
        \item
        If $m\in \ac(\Delta)$, then we have $\Delta(m+1) = \{t^\frown 0: t\in \Delta(m)\}$, and writing $\rho = \sort(\Delta(m))$, we have $(\age_\Delta(m), \age_\Delta(m+1))\in \con(\rho)$ (Definition~\ref{Def:Consecutive}). \qed
    \end{enumerate}
\end{defin}

\begin{fact}
\label{Fact:Basic_Diary_Facts}
\begin{enumerate}
    \item 
    If $\Delta$ is a diary and $\frak{p}\in \MP_{non}$, then for any $s\in \Delta$ with $s^\sf{p} = \frak{p}$ with $s\neq (\frak{p}, 0^{\ell(s)})$, then $\age_\Delta(s)\subsetneq \max(\frak{p})$. This follows from case $2$ of Definition~\ref{Def:SplittingClass} and part $1$ of Definition~\ref{Def:ControlledCodingTriple}. 
    \item
    For any $t\in \cdnd(\Delta)$, we have $\argpath_\Delta(t) = t^\sf{p}$.  By part $2$ of Definition~\ref{Def:ControlledCodingTriple}, we have $\age_\Delta(t) = \min(P_i)$. Item $6$ of Definition~\ref{Def:Diary} then implies that $\argpath_\Delta(t)\in \MP$. But since $\age_\Delta(t|_m)\in P(\sort(t)) = t^\sf{p}$ for every $m\leq \ell(t)$, we must have $\argpath_\Delta(t) = t^\sf{p}$. \qed
\end{enumerate}
\end{fact}

Every level subset of a diary has a distinguished subset of \emph{critical nodes}. If $\Delta$ is a diary and $m< \rm{ht}(\Delta)$, we define $\critnd_\Delta(m)\subseteq \Delta(m)$ as follows. If $m\in \sp(\Delta)$, we set $\critnd_\Delta(m) = \spnd(\Delta)\cap \Delta(m)$. If $m\in \cd(\Delta)$, we set $\critnd_\Delta(m) = \cdnd(\Delta)\cap \Delta(m)$. If $m\in \ac(\Delta)$, then by Proposition~\ref{Prop:Consecutive},  $(\age_\Delta(m), \age_\Delta(m+1))$ is essential on a unique $S\subseteq \Delta(m)$, and we set $\critnd_\Delta(m) = S$. 

Given a diary $\Delta$, the \emph{structure coded by $\Delta$}, denoted $\str(\Delta)$, is the $\cL$-structure on underlying set $\cdnd(\Delta)$ such that given $s, t\in \cdnd(\Delta)$, we set $U^{\str(\Delta)}(t) = t^\sf{u}$, and if $\ell(t)> \ell(s)$, we put $R^{\str(\Delta)}(t, s) = t(\ell(s))$. We let $\str^\#(\Delta)$ denote the enumerated structure isomorphic to $\str(\Delta)$ via $c^\Delta$. More generally, given $m< \rm{ht}(\Delta)$ and $S = \{s_0\lex\cdots\lex s_{d-1}\}\subseteq \Delta(m)$, we define the $\cL_d$-structure $\str(\Delta)/S$ on underlying set $\{t\in \cdnd(\Delta): t\sqsupseteq s \text{ for some }s\in S\}$ by setting $V^{\str(\Delta)/S}(t) = j$ iff $t\sqsupseteq s_j$ and with the $\cL^\sf{b}$-part induced from $\str(\Delta)$. Much of the motivation behind Definition~\ref{Def:Diary} is to guarantee a nice cohesion between the ages assigned to each level of $\Delta$ and $\str(\Delta)$. We clarify this in Proposition~\ref{Prop:Diary_Correct_Age}.

\begin{prop}
    \label{Prop:Diary_Correct_Age}
    Let $\Delta$ be a diary, $m< \rm{ht}(\Delta)$, and $S\subseteq \Delta(m)$. 
    \begin{enumerate}
        \item 
        $\rm{Age}(\str(\Delta)/S)\subseteq \age_\Delta(S)$. In particular, we always have $\rm{Age}(\str(\Delta))\subseteq \cK$. 
        \item
        Let $n< \omega$ satisfy $\ell^\Delta(n-1)< m$. Define $\eta\colon |S|\times n\to k$ where given $(i, a)\in |S|\times n$, we set $\eta(i, a) = s_i(\ell^\Delta(a))$. Then writing $\gamma = (\str^\#(\Delta)|_n, \sort(S), \eta)$, we have $\age_\Delta(S)\subseteq \cK{\cdot}\gamma$.  
    \end{enumerate}
\end{prop}

\begin{proof}
    $(1)$ Given a finite $\bB\subseteq \str(\Delta)/S$, we prove $\bB\in \age_\Delta(S)$ by induction on $|B|$ for every level set $S\subseteq \Delta$ simultaneously. For $|B| = 1$, the result is clear. So suppose $|B|> 1$, and let $b\in B$ be $\leq_\ell$-least. Write $\pi_{\ell(b)}[B] = \{t_0\lex\cdots\lex t_{\alpha-1}\}:= T$, and let $j< \alpha$ satisfy $b = t_j$. For each $i\in \alpha{\setminus}\{j\}$, let $q_i< k$ be unique with $t_i^\frown q_i\in \Delta$, and let $\phi\colon (\alpha-1)\to k$ be the function with $\phi(i) = q_{(\alpha{\setminus}\{j\})(i)}$. By induction, viewing $\bB{\setminus}\{b\}$ as an $\cL_\alpha$-structure, we have $\bB{\setminus}\{b\}\in \age_\Delta(T)$. Then, since $(\age_\Delta(T), j, \phi)$ is a controlled coding triple, we have, viewing $\bB$ as an $\cL_\alpha$-structure, that $\bB\in \age_\Delta(T)$. If $S = \{s_0\lex\cdots\lex s_{d-1}\}$ and $\pi\colon \alpha\to d$ denotes the function with $t_i\sqsupseteq s_{\pi(i)}$ for each $i< \alpha$, then by the properties of splitting, age-change, and coding levels of diaries, we have $\age_\Delta(T)\subseteq \age_\Delta(S){\cdot}\pi$. This implies that $\bB$, as an $\cL_d$-structure, belongs to $\age_\Delta(S)$. 

    $(2)$ We induct on $n$ for every level set $S\subseteq \Delta$ simultaneously. For $n = 0$, there is nothing to show. If $n> 0$, it suffices to consider the case where $S = \{s_0\lex\cdots\lex s_{d-2}\}\subseteq \Delta(\ell^\Delta(n-1)+1)$. Write $\pi_{\ell^\Delta(n-1)}[S]\cup \{c^\Delta(n-1)\} := T:= \{t_0\lex\cdots\lex t_{d-1}\}$, and suppose $j< d$ satisfies $c^\Delta(n) = t_j$. Write $\phi\colon (d-1)\to k$ for the function $\phi(i) = s_i(\ell^\Delta(n-1))$. Since $(\age_\Delta(T), j, \phi)$ is a controlled coding triple by Lemma~\ref{Lem:Controlled_Coding_Restriction}, we have $\age_\Delta(S) = \age_\Delta(T){\cdot}(d{\setminus}\{j\}) = \age_\Delta(T){\cdot}\rm{Add}_{j, \phi}$. Consider the gluing $\gamma' = (\str^\#(\Delta)|_{n-1}, \sort(T), \eta')$, where $\eta'\colon (d\times (n-1))\to k$ is given by $\eta'(i, a) = t_i(\ell^\Delta(a))$. By induction, we have $\age_\Delta(T)\subseteq \cK{\cdot}\gamma'$. It is routine to check that $(\cK{\cdot}\gamma'){\cdot}\rm{Add}_{j, \phi} = \cK{\cdot}\gamma$, where $\gamma$ is as in the proposition statement. Hence $\age_\Delta(S)\subseteq \cK{\cdot}\gamma$ as desired. 
\end{proof}

We now discuss finite diaries for our three main examples; the reader who wishes to skip them can skip to Definition~\ref{Def:EmbDiary}. One quirk of all three examples is that controlled coding pairs uniquely determine controlled coding triples. In practice, this means that if $\Delta$ is a diary, $m\in \cd(\Delta)$, $t\in \Delta(m)$ is the coding node, and $v\in \Delta({>}m)$, then $v(m)$ is completely determined by $\age_\Delta(\{v|_m, t\})$. For classes where controlled coding pairs uniquely determine controlled coding triples, diaries are in one-one correspondence with certain sequences of structures (typically in another language). It is not always the case that controlled coding pairs uniquely determine controlled coding triples; an example where this is not the case is the class $\cG$ of finite graphs. For these classes, at coding levels one must explicitly describe the ``passing number" information.

\begin{exa}
    \label{Exa:3FreeDiaries}
    We describe all finite diaries for the class $\c{G}_3$ of triangle free graphs. By Fact~\ref{Fact:Basic_Diary_Facts}(1), if $\Delta(m) = \{s_0\lex\cdots\lex s_{d-1}\}$ and $\age_\Delta(s_i) = \cG_3{\cdot}\tilde{\iota}_0$, then $i = 0$, and furthermore, $\age_\Delta(m)$ can be recovered from $\age_\Delta(\{s_1\lex\cdots\lex s_{d-1})$. Thus, given a diary $\Delta$, we can recover $\Delta$ just from the nodes of $\Delta$ without maximum age. As in Example~\ref{Exa:LFreeGraphs}, if $z$ is a $(d, 3)$-graph-age function with $z(\{i\}) = 1$ for every $i< d$, we can encode $z$ using a graph on vertex set $d$. Using these ideas, we obtain the following general procedure to produce a diary coding a finite enumerated triangle-free graph:
    \begin{enumerate}
        \item 
        At stage $0$, we let $G_0$ be the empty graph. 
        \item
        Suppose at stage $m< \omega$, we have produced a finite linearly-ordered graph $G_m$. If $m> 0$ and $G_m$ is the empty graph, we can choose to stop. 
        \item
        If at stage $m$ we choose to (or must) continue, we perform exactly one of the following to produce $G_{m+1}$:
        \begin{enumerate}
            \item 
            Introduce a new least vertex which is adjacent to every vertex from $G_m$ (the node $0^m$ has maximum age and splits).
            \item
            If $G_m\neq \emptyset$, choose $v\in G_m$ and duplicate it; the two new vertices will be order consecutive and graph non-adjacent in $G_{m+1}$, and will have the same neighbors in $G_m\setminus \{v\}$ as $v$ did (a node of non-maximal age splits).
            \item
            Delete an edge in $G_m$ (an age change level).
            \item
            Pick $v\in G_m$ whose neighbors form an anti-clique and delete $v$ (a coding level).
        \end{enumerate}
    \end{enumerate}
    Finite diaries for the class $\cG_3$ are in one-one correspondence with finite runs of this procedure. The size of the enumerated triangle-free graph which is coded by this procedure is exactly the number of vertex deletion events, and the edges of this triangle-free graph are determined by the edges in $G_m$ at the time of the vertex deletion events. For instance, if we wish to code an enumerated anti-clique, this is the same as demanding that we only delete isolated vertices from $G_m$. Thus the number of diaries coding the $n$-element anti-clique for $n = 1, 2, 3, 4$ are $1$, $5$, $161$, $134397$. It would be an interesting problem in enumerative combinatorics to obtain an exact formula, or to even understand how this function grows asymptotically (see Subsection~\ref{SubSec:Asymptotics}). \qed
\end{exa}

\begin{exa}
    \label{Exa:TFreeDiaries}
    We describe all finite diaries for the class $\cG_\bT$ of finite \emph{oriented} graphs which forbid the $3$-cycle $\bT$. From Example~\ref{Exa:TFreeAges}, members of $P(d)$ are in one-one correspondence with \emph{directed} graphs on vertex set $d$. Thus we have the following general procedure to produce a diary coding a finite enumerated $\bT$-free oriented graph; note the slight differences from Example~\ref{Exa:3FreeDiaries} in the starting and ending conditions since here, the unary is free. 
    \begin{enumerate}
        \item 
        At stage $0$, we let $G_0$ be the singleton graph.
        \item 
        Suppose at stage $m< \omega$, we have produced a finite linearly-ordered \emph{directed} graph $G_m$. If $G_m = \emptyset$, we must stop.
        \item 
        If $G_m\neq \emptyset$, we perform exactly one of the following to produce $G_{m+1}$:
        \begin{enumerate}
            \item 
            Choose $v\in G_m$ and duplicate it. The two new vertices will be order consecutive and form a $2$-cycle, and they will have the same relations with vertices in $G_m\setminus \{v\}$ as $v$ did (a node splits).
            \item 
            Delete an edge in $G_m$ (an age-change level).
            \item 
            Pick $v\in G_m$ which does not participate in any $2$-cycles and such that there are no edges from the out-neighbors of $v$ to the in-neighbors of $v$ and delete it (a coding level).
        \end{enumerate}
    \end{enumerate}
    Finite diaries for the class $\cG_\bT$ are in one-one correspondence with finite runs of this procedure. The size of the enumerated oriented $\bT$-free graph coded by this procedure is exactly the number of vertex deletion events, and the oriented edges are determined by the edges in $G_m$ at the time of vertex deletion events. \qed
\end{exa}

\begin{exa}
    \label{Exa:HDiaries}
    The description of finite diaries for $\cH$ is slightly more complicated since $|\MP| = 2$. As promised in Example~\ref{Exa:H_Ages_Paths}, we describe the subset of $P(d)$ which is relevant to our description of diaries. We do so implicitly by describing the structures that appear in a sequence describing a typical diary. These will be linearly ordered graphs with red and blue edges where \emph{loops are allowed} and pairs can be unrelated, red related, blue related, \emph{or both}. In addition, there will be two disjoint unary predicates $P_r$ and $P_b$ which partition the vertex set, and $P_r$ will form an initial segment of vertices in the linear order. Let us call structures of this form \emph{RB-graphs}. With these structures, we can describe finite diaries for $\cH$ via finite runs of the following procedure.
    \begin{enumerate}
        \item 
        At stage $0$, $G_0$ is the empty RB-graph.
        \item 
        Suppose at stage $m<\omega$, we have produced an RB-graph $G_m$. If $m> 0$ and $G_m = \emptyset$, we can choose to stop.
        \item
        If at stage $m$ we choose to (or must) continue, we perform exactly one of the following to produce $G_{m+1}$:
        \begin{enumerate}
            \item 
            Introduce a new least vertex among those with unary $P_r$. This vertex will be red adjacent to every member of $P_r$ (including itself), and will be red-and-blue adjacent to every member of $P_b$ (the node $(\frak{p}_r, 0^m)$ has maximum age and splits).
            \item 
            Introduce a new least vertex among those with unary $P_b$. Same as above, switching red and blue (the node $(\frak{p}_b, 0^m)$ has maximum age and splits).
            \item 
            If $G_m\neq \emptyset$, choose $v\in G_m$ and duplicate it. The two new vertices will be order consecutive, will have the same unary relation and loop edges as $v$ did, and will have the same binary relations with vertices of $G_m\setminus\{v\}$ as $v$ did. If $v\in P_r$, the two new vertices will be red adjacent, and if $v\in P_b$, they will be blue adjacent (a node of non-maximal age splits). 
            \item 
            Delete a loop edge (age change).
            \item 
            If $u\neq v\in G_m$ are connected by a red edge and neither $u$ nor $v$ have a red loop, we may delete this edge. Likewise for blue (age change).
            \item 
            If $v\in G_m$ has no loops, is not red-and-blue related to any vertex, its red neighbors contain no red edges among them, and its blue neighbors contain no blue edges among them, we can delete $v$ (coding level). \qed
        \end{enumerate}
    \end{enumerate}
\end{exa}

\begin{defin}
    \label{Def:EmbDiary}
    Let $\Theta$ and $\Delta$ be diaries. A \emph{diary embedding} of $\Theta$ into $\Delta$ is any $\phi\in \aemb(\Theta, \Delta)$ which additionally satisfies:
    \begin{itemize}
        \item
        If $m\in \ac(\Theta)$, then $\wt{\phi}(m)\in \ac(\Delta)$. Defining $\phi^+\colon \Theta(m+1)\to \Delta(\wt{\phi}(m)+1)$ via $\phi^+(s^\frown 0) = \phi(s)^\frown 0$, we have $\age_\Theta(m+1) = \age_\Delta(\phi^+[\Theta(m+1)])$. 
    \end{itemize}
    Write $\demb(\Theta, \Delta)$ for the set of diary embeddings of $\Theta$ into $\Delta$. \qed
\end{defin}

\begin{rem}
      Given $\phi\in \demb(\Theta, \Delta)$, we can define $\phi^+$ on every level. Given any $m< \rm{ht}(\Theta)-1$, define $\phi^+\colon \Theta(m+1)\to \Delta(\wt{\phi}(m)+1)$, where given $s\in \Theta(m)$ and $i< k$ with $s^\frown i\in \Theta(m+1)$, we have $\phi^+(s^\frown i) = \phi(s)^\frown i$. This map is well defined since $\phi\in \aemb(\Theta, \Delta)$. When $m\not\in\ac(\Delta)$, the properties of diaries already imply that $\age_\Theta(m+1) = \age_\Delta(\phi^+[\Theta(m+1)])$, and for $m\in \ac(\Delta)$, we explicitly demand this in the definition of diary embedding.
\end{rem}

Note that if $\phi\colon \Theta\to \Delta$ is a diary embedding, then $\phi|_{\str(\Theta)}\colon \str(\Theta)\to \str(\Delta)$ is an embedding of $\cL$-structures, and given $S\subseteq \Theta(m)$, $\phi|_{\str(\Theta)/S}\colon \str(\Theta)/S\to \str(\Delta)/\phi[S]$ is an embedding of $\cL_d$-structures. Conversely, we have the following.

\begin{prop}
    \label{Prop:Induced_Subdiary}
    Given any diary $\Delta$ and $X\subseteq \cdnd(\Delta)$, there are a unique diary $\Delta\|_X$ and $\phi_{\Delta, X}\in \demb(\Theta, \Delta)$ with $\phi[\cdnd(\Delta\|_X)] = X$. 
\end{prop}

Along with notation from the proposition statement, the map $\pi_{\Delta, X}\colon X\to \Delta\|_X$ defined in the proof (a partial inverse to $\phi_{\Delta, X}$) will also be used later.

\begin{proof}
    Write $|\crit^\Delta(X)| = N\leq \omega$ and $\crit^\Delta(X) = \{a_m: m< N\}\subseteq \omega$ with $a_{m-1}< a_m$ for each $1\leq m< N$. We define a map $\pi_{\Delta, X}\colon X\to \MP\times k^{<\omega}$ where given $x\in X$, we set $\pi_{\Delta, X}(x)^\sf{p} = x^\sf{p}$, $\ell(\pi_{\Delta, X}(x)) = |\ell(x)\cap \crit^\Delta(X)|$, and given $m< \ell(\pi_{\Delta, X}(x))$, we set $\pi_{\Delta, X}(x)(m) = x(a_m)$. One should think of $\pi_{\Delta, X}(x)$ as being formed from $x$ by deleting all of the levels not in $\crit^\Delta(X)$. Note that $\pi_{\Delta, X}$ preserves both $\leq_\ell$ and $\lex$ and that $\im(\pi_{\Delta, X})$ is a $\sqsubseteq$-antichain.  

    We set $\Delta\|_X = \im(\pi_{\Delta, X}){\downarrow}$. Given $m< \rm{ht}(\Delta\|_X)$, we define $\age_\Delta\|_X(m)$ as follows. Pick $\{x_i: i< d\}\in X$ such that $\ell(\pi_{\Delta, X}(x_i))\geq m$ for each $i< d$ and with $\Delta\|_X(m) = \{\pi_{\Delta, X}(x_0)|_m\lex\cdots\lex \pi_{\Delta, X}(x_{d-1})|_m\}$. Then set $\age_{\Delta\|_X}(m) = \age_\Delta(\{x_i|_{a_m}: i< d\})$. That this turns $\Delta\|_X$ into a diary follows from Proposition~\ref{Prop:Consecutive}, Proposition~\ref{Prop:Split_restrict}, and Lemma~\ref{Lem:Controlled_Coding_Restriction}. 

    We define $\phi_{\Delta, X}\colon \Delta\|_X\to \Delta$ as follows. Consider $y\in \Delta\|_X(m)$, and suppose $x\in X$ satisfies $y\sqsubseteq \pi_{\Delta, X}(x)$. We set $\phi_{\Delta, X}(y) = x|_{a_m}$. It is straightforward, if a bit tedious, to check that $\phi_{\Delta, X}\colon \Delta\|_X\to \Delta$ is a diary embedding. 
\end{proof}

Our next goal, Proposition~\ref{Prop:Exists_Diary}, is to construct a diary coding $\bK$.

\begin{defin}
	\label{Def:ValidPassingNumbers}
    Given a path sort $\rho\colon 2\to \MP$, $\cA\in P(\rho)$, $j< 2$, and $q< k$, we say that $q$ is a \emph{valid passing number} for $(\rho, \cA, j)$ if $\cA{\cdot}\rm{Add}_{j, \iota_q}\in \rho(1-j)$. In particular, $0$ is always a valid passing number for $(\rho, \cA, j)$.

    If $\Delta\subseteq \MP\times k^{<\omega}$ is an aged coding tree, $m< \rm{ht}(\Delta)$,  and $s\neq t\in \Delta(m)$, we say that $q< k$ is a \emph{valid passing number} for $(\Delta, s, t)$ if, writing $\{s, t\} = \{s_0, s_1\}$ with $s_0\lex s_1$ and letting $j< 2$ satisfy $t = s_j$, we have $q$ a valid passing number for $(\sort(\{s, t\}), \age_\Delta(\{s, t\}), j)$. \qed
\end{defin}

\begin{lemma}
    \label{Lem:Valid_Passing}
    Suppose $\rho$, $\cA$, and $j< 2$ are as in Definition~\ref{Def:ValidPassingNumbers} and that $q< k$ is a valid passing number for $(\rho, \cA, j)$. Suppose $\rho(1-j) = \frak{p}\in \MP_{non}$ and $\cA^{1-j} = \max(\frak{p})$. Then $q$ is a valid passing number for $(\rho, \cA\cap \max(\frak{p}'){\cdot}\iota_{1-j}^{-1}, j)$. 
\end{lemma}

\begin{proof}
    Without loss of generality suppose $j = 1$ and $q\neq 0$. Then $\cA{\cdot}\rm{Add}_{1, \iota_q}\in \frak{p}$, and in particular $\cA{\cdot}\rm{Add}_{1, \iota_q}\subseteq \cA^0 = \max(\frak{p})$. Since $\cA\in P(\rho)$ as witnessed by some gluing, we must have $\cA\subseteq \cK{\cdot}\wt{\rho}$. By the definition of $\cA{\cdot}\rm{Add}_{1, \iota_q}$ and since $\frak{p}\in \MP_{non}$, we must have $\cA{\cdot}\rm{Add}_{1, \iota_q}\subseteq \max(\frak{p}')$. Hence we have $(\cA\cap\max(\frak{p}'){\cdot}\iota_0^{-1}){\cdot}\rm{Add}_{1, \iota_q} = \cA{\cdot}\rm{Add}_{1, \iota_q}$.
\end{proof}

\begin{prop}
    \label{Prop:Exists_Diary}
    There is a diary $\Delta$ with $\str(\Delta) \cong \bK$.
\end{prop}

\begin{proof}
    We start by setting $\Delta(0) = \MP\times \{\emptyset\}$ and $\age_\Delta(0) = \cK{\cdot}\sort(\Delta(0))$. Fix $n< \omega$, and assume inductively that we have build $\Delta$ up to level 
    $$m:=\begin{cases}
        \ell^\Delta(n-1)+1 \quad &\text{if } n\geq 1,\\
        0 \quad &\text{if } n = 0.
    \end{cases}$$ 
    Fix $t\in \Delta(m)$. This is the node we will extend to become $c^\Delta(n)$. The choice of $t$ is arbitrary, so long as we eventually arrange that item $2$ of Definition~\ref{Def:Diary} holds. Our construction up to level $\ell^\Delta(n)+1$ now proceeds as follows.
    \begin{itemize}
        \item 
        $t$ is a splitting node. Write $u = t^\frown 1$.
        \item
        For every $s\in \Delta(m+1)\setminus \{u\}$, let $I_s$ denote the set of valid passing numbers for $(s, u)$. The next several levels are splitting levels satisfying item $4$ of Definition~\ref{Def:Diary}. For each $s\in \Delta(m+1)\setminus\{u\}$, $|I_s|-1$ of these splitting levels will have a splitting node extending $s$. Let $M> m$ denote the level we are at after these splitting levels, and write $\Delta(m) = \{s_0\lex\cdots\lex s_{d-1}\}$. Note that for $s\in \Delta(m+1)\setminus \{u\}$, we have $|\succ_\Delta(s, M)| = |I_s|$, and we have $\succ_\Delta(u, M):= \{s_j\}$ a singleton.  
        
        Write $\phi\colon (d-1)\to k$ for the function such that for each $s\in \Delta(m+1)\setminus\{u\}$, writing $X_s = \{i< d-1: s\sqsubseteq s_{(d{\setminus}\{j\})(i)}\}$, $\phi|_{X_s}$ is the $\lex$-increasing bijection onto $I_s$. 
        \item
        By Lemma~\ref{Lem:Valid_Passing}, $\age_\Delta(M){\cdot}\rm{Add}_{j, \phi}$ contains all unaries. Write $\age_\Delta(M) = \cA_0$, and find $\cA_0\supseteq \cdots\supseteq \cA_r = \la \cA_0, j, \phi\ra$ with $(\cA_i, \cA_{i+1})\in \con(\rho)$. Then the levels in $[M, M+r)$ are age-change levels with $\age_\Delta(M+i) = \cA_i$ for $i< r$.
        \item
        Now for every $i< d$,  is the unique element of $\succ_\Delta(s_i, M+r) = \{\Left(s_i, M+r)\}$. We set $\ell^\Delta(n) = M+r$ and $c^\Delta(n) = \Left(s_j, \ell^\Delta(n))$. We set $\Delta(\ell^\Delta(n)+1) = \{\Left(s_{(d{\setminus}\{j\})(i)}, \ell^\Delta(n))^\frown \phi((d{\setminus}\{j\})(i)): i< d-1\}$ and $\age_\Delta(\ell^\Delta(n)+1) = \cA_0{\cdot}\rm{Add}_{j, \phi} = \cA_r{\cdot}\rm{Add}_{j, \phi}$.
    \end{itemize}
    By our construction, given $s\in \Delta(m)$ and $q\in I_{s^\frown 0}$, there is a unique $u\in \succ_\Delta(s, \ell^\Delta(n)+1)$ with $u(\ell^\Delta(n)) = q$; we denote this $u$ by $s{*}q$. 

    We show that $\str(\Delta)\cong \bK$ by showing that it satisfies the extension property for one-element extensions. Fix $n < \omega$, and suppose $j< \sfU$ and that $\gamma_n:= (\str^\#(\Delta)|_n, \iota_j, \eta)$ is a rank $1$ gluing with $\cK{\cdot}\gamma_n\neq\emptyset$, thus representing an instance of an extension problem. Given $m< n$, set $\gamma_m = (\str^\#(\Delta)|_m, \rho, \eta|_{1\times m})$. Thus $\max(P_j) = \cK{\cdot}\gamma_0\supseteq\cdots\supseteq \cK{\cdot}\gamma_n \in P_j$, so pick some $\frak{p}\in \MP_j$ with $\cK{\cdot}\gamma_m\in \frak{p}$ for every $m\leq n$. 

    Set $s_0 = (\frak{p}, \emptyset)\in \Delta$. Suppose $m< n$ and that $s_m$ has been determined with $\age_\Delta(s_m) = \cK{\cdot}\gamma_m$ and such that if $m> 0$, we have $s_m\in \Delta(\ell(c^{m-1})+1)$. Write $t = c^\Delta(m)|_{\ell^\Delta(m-1)+2}$, and note that $t(\ell^\Delta(m-1)+1) = 1$; in particular $s_m^\frown 0\neq t$. Then $q_m:= \eta(1, m)< k$ is a valid passing number for $(s_m^\frown 0, t)$ since $\cK{\cdot}\gamma_{m+1}\in \frak{p}$. We then set $s_{m+1} = s_m{*}q_m$.

    Upon defining $s_n$, any $t\in \cdnd(\Delta)\cap \succ_\Delta(s_n)$ will witness the desired instance of the extension property.   
\end{proof}

A major theorem of this paper is that any two diaries which code $\bK$ are bi-embeddable.

\begin{theorem}
	\label{Thm:EmbDiary}
	Let $\Theta$ be any diary, and let $\Delta$ be any diary with $\str(\Delta) \cong \bK$. Then $\demb(\Theta, \Delta)\neq \emptyset$. In particular, any two diaries coding $\bK$ are bi-embeddable.
\end{theorem}

We postpone the proof of Theorem~\ref{Thm:EmbDiary} until Section~\ref{Section:Proof_Of_Thm_EmbDiary}. Now is a good time to mention various aspects of why our definition of diary contains all of the features it has. Namely, much of what motivates Definition~\ref{Def:Diary} is that we want Theorem~\ref{Thm:EmbDiary} to be true. One could imagine, for instance, trying to relax item $5$ of the definition to only demand that $\age_\Delta(m+1) = \age_\Delta(m){\cdot}\rm{Add}_{j, \phi}$. But if we did this and $\Delta$ was a diary coding $\bK$, we could find $X\subseteq \cdnd(\Delta)$ with $\str(\Delta\|_X)\cong \bK$ and so that all coding levels of $\Delta\|_X$ had controlled coding triples. Similarly, one could imagine trying to simplify the definition of $\sp(\cA, i)$ to always be as in case $1$ of Definition~\ref{Def:SplittingClass}. But again, if we did this and $\Delta$ was a diary coding $\bK$, we could find $X\subseteq \cdnd(\Delta)$ with $\str(\Delta\|_X)\cong \bK$ and so that for all splitting levels $m$ corresponding to case $2$ of Definition~\ref{Def:SplittingClass}, then level $m+1$ would be an age-change level such that $\age_\Delta(m+2) = \sp(\age_\Delta(m), i)$ as we currently have it defined.

We end the section by showing how Theorem~\ref{Thm:EmbDiary} yields lower bounds for big Ramsey degrees.

\begin{defin}
    \label{Def:Diary_Shape}
Given $\bA\in \cK$, we set 
$$D_\bA:= \{(\Theta, g): \Theta \text{ a diary and $g\colon \bA\to \str(\Theta)$ an isomorphism}\}.$$
Note that by item $2$ of Definition~\ref{Def:Diary} and Proposition~\ref{Prop:Finite_Posets}, $D_\bA$ is finite. 

If $\Delta$ is a diary and $f\in \emb(\bA, \str(\Delta))$, the \emph{$\Delta$-shape of $f$} is defined via
$$\shp_\Delta(f) := (\Delta\|_{\im(f)}, \pi_{\Delta,  \im(f)}\circ f)\in D_\bA.$$
Viewing $\shp_\Delta$ as a class function, we write $\shp_{\Delta, \bA}\colon \emb(\bA, \str(\Delta))\to D_\bA$ for the restriction of $\shp_\Delta$ to $\emb(\bA, \str(\Delta))$. \qed
\end{defin}

\begin{theorem}
    \label{Thm:Lower_Bound}
    Let $\bA\in \cK$. Then $\rm{BRD}(\bA, \cK)\geq |D_\bA|$
\end{theorem}

\begin{proof}
    Fix a diary $\Delta$ with $\bK \cong  \str(\Delta)$. Towards showing that $\shp_{\Delta, \bA}$ is unavoidable, fix $(\Theta, g)\in D_\bA$, and let $\eta\in \emb(\bK, \str(\Delta))$. Write $\Delta' = \Delta\|_{\im(\eta)}$ and $\phi = \phi_{\Delta, \im(\eta)}$. Then $\str(\Delta')\cong \bK$, so in particular by Theorem~\ref{Thm:EmbDiary}, there is some $\sigma\in \demb(\Theta, \Delta')$. Then  $\phi\circ \sigma\circ g\in \emb(\bA, \str(\Delta))$ satisfies $\im(\phi\circ \sigma\circ g)\subseteq \im(\eta)$ and $\shp_{\Delta, \bA}(\phi\circ \sigma\circ g) = (\Theta, g)$.
\end{proof}

Lastly, we show how diaries can be encoded via expansions of $\bK$ in a finite relational language.

\begin{defin}
    \label{Def:Diary_Exp}
    We define a relational language $\cL^*\sqsupseteq \cL$ as follows. Let $\bB_0,..., \bB_{N-1}\in \cK$ list each enumerated structure in $\cK$ with size at most $r:= \max(2(\|\cF\|-1), 4)$. For each $M< N$ and $y \in D_{\bB_M}$, introduce a new $|B_M|$-ary relation $R_y$ into $\cL^*$. 
    
    Given a diary $\Delta$, We define the $\cL^*$-expansion $\str^*(\Delta)$ of $\str(\Delta)$ as follows. Fix $M< N$, and write $|B_M| = d$. Now given $x_0,..., x_{d-1}\in \cdnd(\Delta)$, we set $R_y^{\str^*(\Delta)}(x_0,..., x_{d-1})$ iff the $x_i$ are distinct and, writing $X = \{x_0,..., x_{d-1}\}$ and $y = (\Theta, g)$, we have $\Delta\|_X = \Theta$ and $g(i) = \pi_{\Delta, X}(x_i)$ for each $i< d$.  \qed
\end{defin}

\begin{theorem}
    \label{Thm:Expansion_Codes_Diary}
    There are a finite relational language $\cL^*\supseteq \cL$ and an $\cL^*$-expansion $\bK^*$ of $\bK$ such that given any $\bA\in \cK$, $|\bK^*(\bA)| = |D_\bA|$ and the map from $\emb(\bA, \bK)\to \bK^*(\bA)$ sending $f$ to $\bK^*{\cdot}f$ witnesses that $\rm{BRD}(\bA, \cK)\geq |D_\bA|$. 
\end{theorem}

\begin{proof}
    Fix a diary $\Delta$ with $\bK \cong \str(\Delta)$, and set $\bS^*:= \str^*(\Delta)$. 
    Now fix $\bA\in \cK$; we show that if $f_0, f_1\in \emb(\bA, \str(\Delta))$, then $\bS^*{\cdot}g_0 = \bS^*{\cdot}g_1$ iff $\shp_{\Delta, \bA}(f_0) = \shp_{\Delta, \bA}(f_1)$. Certainly the right-to-left implication holds. For the left-to-right, this amounts to saying that a finite diary $\Theta$ (in fact any diary) is completely determined by the subdiaries $\Theta|_X$ for $X\subseteq \cdnd(\Theta)$ of size at most $r$. Since $r\geq 4$, we accurately recover the relative levels of all coding and splitting events. Since $r\geq (\|\cF\|-1)+2$, we accurately recover the relative levels between coding/splitting and age-change events. And since $r\geq 2(\|\cF\|-1)$, we accurately record the relative levels between any pair of age-change events. Taken together, this suffices to completely reconstruct $\Theta$.
\end{proof}

\section{Proof of Theorem~\ref{Thm:EmbDiary}}
\label{Section:Proof_Of_Thm_EmbDiary}

Our proof of Theorem~\ref{Thm:EmbDiary} is adapted from the characterization of the exact big Ramsey degrees for the class of finite graphs given by Laflamme, Sauer, and Vuksanovic \cite{LSV}.  In their proof, they more-or-less treat all nodes as coding nodes, and while the resulting graph is not the Rado graph, it is bi-embeddable with it. Upon defining a suitable notion of ``diary," the authors then fix an embedding $\psi$ from the universal graph coded by $2^{<\omega}$ and the diary. This leads to an important collection of pairs of nodes, namely those pairs $(u, v)$ where $\psi^{-1}(\succ(v))$ is dense above $u$. 

Our strategy is similar, but we require several notions of largeness which are more sophisticated than density above $u$, as we will need to keep track of the relevant age-set structure.

\textbf{For this section}, we let $\Theta$ be any diary, $\Delta$ a diary with $\str(\Delta)\cong \bK$, and without loss of generality, we take $\bK = \str^\#(\Delta)$.

\subsection{Large subsets of coding nodes}

We begin by developing the notions of largeness we will need. This subsection only refers to subsets of $\ct^\bK$; the diary $\Delta$ won't feature until the next subsection.

\begin{defin}
	Suppose that $S\subseteq \im(c^\bK)$. Fix $i< \sfU$, $\frak{p}\in \sf{Full}_i$, and $u\in \ct^\bK_i$ with $\age_\bK(u) = \max(\frak{p})$. We define the property that $S$ is \index{$(u, \frak{p})$-large set of coding nodes}\emph{$(u, \frak{p})$-large} by induction on $|\frak{p}|$.
	\begin{itemize}
		\item 
		If $|\frak{p}| = 1$, then $S$ is $(u, \frak{p})$-large if it is dense in $\ct^\bK$ over $u$.
		\item
		Otherwise, we say that $S$ is $(u, \frak{p})$-large if for every $v\sqsupseteq u$ with $\age_\bK(v) = \age_\bK(u) = \max(\frak{p})$, there is $w\sqsupseteq v$ with $\age_\bK(w) = \max(\frak{p}')$ so that $S$ is $(w, \frak{p}')$-large. 
	\end{itemize}
\end{defin}

As a convention, if we refer to anything being $(u, \frak{p})$-large for some $\frak{p}\in \sf{Full}_i$, it is assumed that $u\in \ct^\bK_i$ and $\age_\bK(u) = \max(\frak{p})$.

\begin{lemma}
\label{Lem:LargeAllPaths}
	Suppose $i< \sfU$, $u\in \ct^\bK_i$ and $S\subseteq \im(c^\bK)$ is dense in $\ct^\bK$ over $u$. Then $S$ is $(u, \frak{p})$-large for any $\frak{p}\in \full_i$ with $\max(\frak{p}) = \age_\bK(u)$.
\end{lemma}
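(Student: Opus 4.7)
The plan is to induct on $|\frak{p}|$. The base case $|\frak{p}|=1$ is immediate, since by definition $(u,\frak{p})$-largeness in that case reduces to $S$ being dense in $\ct$ over $u$, which is exactly the hypothesis.

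For the inductive step with $|\frak{p}|\geq 2$, unpacking the definition of $(u,\frak{p})$-large, I need, for each $v\sqsupseteq u$ with $\c{K}(v)=\c{K}(u)=\max(\frak{p})$, to produce a node $w\sqsupseteq v$ with $\c{K}(w)=\max(\frak{p}')$ for which $S$ is $(w,\frak{p}')$-large. Once such a $w$ is in hand, the rest is automatic: density of $S$ over $u$ transfers to density over $w$ (because $w\sqsupseteq u$), and since $|\frak{p}'|=|\frak{p}|-1$ and $\max(\frak{p}')=\c{K}(w)$, the inductive hypothesis applied to $w$ and $\frak{p}'\in \full(\vec{P}_i)$ yields $(w,\frak{p}')$-largeness.

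The substantive task is producing $w$, which I intend to do by iterated gluing via Proposition~\ref{Prop:MakeAgesWithGluings}. By Proposition~\ref{Prop:SpectrumDeterminesAge}, $\max(\frak{p}')=\bigcap_{\gamma\in\spec(\max(\frak{p}'))}\c{K}(\gamma)$, and this spectrum is finite (since $\c{I}$ is finite and there are only finitely many gluings of any fixed sort and enumerated structure), so enumerate it as $\gamma_1,\dots,\gamma_n$. Setting $t_0:=v$, I inductively invoke Proposition~\ref{Prop:MakeAgesWithGluings} on the singleton $\{t_{r-1}\}\subseteq\ct(\ell(t_{r-1}))$ with the reasonable gluing $\gamma_r$ (reasonableness of each $\gamma_r$ follows from the remark after Definition~\ref{Def:SpectrumOfAge}, as $\max(\frak{p}')\in P_i$ is a reasonable class), producing $t_r\in\ct_i$ with $t_r\sqsupseteq t_{r-1}$ and $\c{K}(t_r)=\c{K}(t_{r-1})\cap\c{K}(\gamma_r)$. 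Setting $w:=t_n$ then gives
\[
\c{K}(w)=\c{K}(v)\cap\bigcap_{r=1}^n\c{K}(\gamma_r)=\max(\frak{p})\cap\max(\frak{p}')=\max(\frak{p}'),
\]
using $\max(\frak{p}')\subsetneq\max(\frak{p})$ for the final equality.

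The main obstacle is this construction of $w$: a single application of Proposition~\ref{Prop:MakeAgesWithGluings} may land in a class strictly larger than $\max(\frak{p}')$, which is why I must iterate across the whole finite spectrum $\spec(\max(\frak{p}'))$ to force the intersection down to exactly $\max(\frak{p}')$. Beyond this, the argument is a routine combination of the inductive hypothesis with the density hypothesis on $S$.
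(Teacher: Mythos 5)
Your proposal is correct and follows the same route as the paper's proof: induct on $|\frak{p}|$, note the base case is the definition, and in the inductive step fix $v$, produce $w\sqsupseteq v$ with $\c{K}(w)=\max(\frak{p}')$, and apply the inductive hypothesis using that density of $S$ over $u$ passes to density over $w$. The only difference is that the paper simply asserts the existence of such a $w$, whereas you justify it by iterating the construction in Proposition~\ref{Prop:MakeAgesWithGluings} over the finite set $\spec(\max(\frak{p}'))$ and invoking Proposition~\ref{Prop:SpectrumDeterminesAge}; this is a sound (and more careful) filling-in of that step.
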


\begin{proof}
	We induct on $|\frak{p}|$. When $|\frak{p}| = 1$, then we have $\frak{p} = \{\min(P_i)\}$, and this is just the definition of $(u, \frak{p})$-large. Now suppose $|\frak{p}| \geq 2$. Consider $v\sqsupseteq u$ with $\age_\bK(v) = \age_\bK(u) = \max(\frak{p})$. Find $w\sqsupseteq v$ with $\age_\bK(w) = \max(\frak{p}')$. Since $S$ is dense in $\ct^\bK$ over $w$, our inductive assumption implies that $S$ is $(w, \frak{p}')$-large. Hence $S$ is $(u, \frak{p})$-large as desired. 
\end{proof}

\begin{lemma}
	\label{Lem:LargePartitionRegular}
	Suppose $i< \sfU$, $\frak{p}\in \sf{Full}_i$, $u\in \ct^\bK_i$, and that $S\subseteq \im(c^\bK)$ is $(u, \frak{p})$-large. If $n< \omega$ and we write $S = \bigcup_{j< n} S_j$, then for some $j< n$ and some $v\sqsupseteq u$ with $\age_\bK(v) = \age_\bK(u) = \max(\frak{p})$, we have that $S_j$ is $(v, \frak{p})$-large.
\end{lemma}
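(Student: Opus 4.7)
The plan is to argue by induction on $|\frak{p}|$, taking the lemma itself, applied to the shorter path $\frak{p}'$, as the inductive hypothesis. Throughout, the standing convention that ``$(u,\frak{p})$-large'' presupposes $\c{K}(u)=\max(\frak{p})$ is essential for bookkeeping.

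In the base case $|\frak{p}|=1$ we have $\frak{p}=\{P_i^{\min}\}$, so the condition $\c{K}(v)=\max(\frak{p})=P_i^{\min}$ on the desired $v$ is automatic for every $v\sqsupseteq u$, and $(u,\frak{p})$-largeness unfolds to the statement that $S$ is dense in $\ct$ over $u$. Hence one only needs to show that some $S_j$ is dense over some $v\sqsupseteq u$. This is a standard chaining argument: assuming no $S_j$ is dense over any $v\sqsupseteq u$, one iteratively picks $w_0\sqsupseteq u$ with $S_0\cap\succ(w_0)=\emptyset$, then $w_1\sqsupseteq w_0$ with $S_1\cap\succ(w_1)=\emptyset$, and so on up to $w_{n-1}$. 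Then $S\cap\succ(w_{n-1})=\emptyset$, contradicting the density of $S$ above $u$.

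For the inductive step, suppose $|\frak{p}|\ge 2$. Negating the desired conclusion supplies, for every $j<n$ and every $v\sqsupseteq u$ with $\c{K}(v)=\max(\frak{p})$, a node $v^{*}\sqsupseteq v$ with $\c{K}(v^{*})=\max(\frak{p})$ such that no $w\sqsupseteq v^{*}$ with $\c{K}(w)=\max(\frak{p}')$ makes $S_j$ be $(w,\frak{p}')$-large. The plan is to chain these witnesses: set $v_0=u$ and, for each $j=0,\dots,n-1$ in turn, apply the negation to $(j,v_j)$ to obtain $v_{j+1}\sqsupseteq v_j$ with $\c{K}(v_{j+1})=\max(\frak{p})$ blocking color $j$. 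The chain is well-defined because each witness itself lies in $\{v\sqsupseteq u:\c{K}(v)=\max(\frak{p})\}$. Finally, $v_n\sqsupseteq u$ has $\c{K}(v_n)=\max(\frak{p})$, so $(u,\frak{p})$-largeness of $S$ produces $w\sqsupseteq v_n$ with $\c{K}(w)=\max(\frak{p}')$ such that $S$ is $(w,\frak{p}')$-large; the inductive hypothesis, applied to $w$ and the partition $S=\bigcup_{j<n}S_j$, then yields $j<n$ and $w'\sqsupseteq w$ with $\c{K}(w')=\max(\frak{p}')$ so that $S_j$ is $(w',\frak{p}')$-large. Since $w'\sqsupseteq v_n\sqsupseteq v_{j+1}$, this contradicts the defining property of $v_{j+1}$.

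The main obstacle is keeping the chain inside nodes of age exactly $\max(\frak{p})$: if any $v_j$ were forced to have a strictly smaller age, one could no longer reapply the ``not $(v_j,\frak{p})$-large'' witness at the next stage. The precise formulation of the negation is what makes this work, since it hands us a witness $v^{*}$ that still has the ambient age $\max(\frak{p})$; the descent to age $\max(\frak{p}')$ is therefore performed exactly once, at the very end, where the inductive hypothesis can be invoked to close the argument.
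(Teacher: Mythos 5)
Your proof is correct and follows essentially the same strategy as the paper's: the chaining/density argument in the base case, and in the inductive step the same construction of a chain $v_0\sqsubseteq\cdots\sqsubseteq v_n$ of nodes of age $\max(\frak{p})$ each blocking one color, followed by a single descent to age $\max(\frak{p}')$ and an appeal to the inductive hypothesis to derive a contradiction.
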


\begin{proof}
	First assume that $\frak{p} = \{\min(P_i)\}$. Towards a contradiction, suppose we could find $u\in \ct^\bK$ with $\age_\bK(u) = \min(P_i)$, $S\subseteq \im(c^\bK)$ dense in $\ct^\bK$ over $u$, and a partition $S = S_0\cupdots S_{n-1}$ so that the conclusion of the lemma fails. Set $u = u_0$. Suppose $u_j\sqsupseteq\cdots \sqsupseteq u_0$ have been defined with $\age_\bK(u_j) = \min(P_i)$. As $S_j$ is not dense in $\ct^\bK$ over $u_j$, find $u_{j+1}\sqsupseteq u_j$ with $\age_\bK(u_{j+1}) = \min(P_i)$ and so that $S_j\cap \succ(u_{j+1}) = \emptyset$. Continue until $u_n$ has been defined. Then $S$ is dense in $\ct^\bK$ over $u_n$, but also $S_j\cap \succ(u_n) = \emptyset$ for every $j< n$. This is a contradiction.
	
	Now we proceed by induction on $|\frak{p}|$. Set $u = u_0$. Suppose $u_j\sqsupseteq\cdots \sqsupseteq u_0$ have been defined for some $j< n$ with $\age_\bK(u_j) = \max(\frak{p})$. Since $S_j$ is not $(u_j, \frak{p})$-large, we can find $u_{j+1}\sqsupseteq u_j$ with $\age_\bK(u_{j+1}) = \max(\frak{p})$ so that for any $v\sqsupseteq u_{j+1}$ with $\age_\bK(v) = \max(\frak{p}')$, we have that $S_j$ is not $(v, \frak{p}')$-large. However, since $S$ is $(u, \Gamma)$-large, we can find $v\sqsupseteq u_n$ with $\age_\bK(v) = \max(\frak{p}')$ and so that $S$ is $(v, \frak{p}')$-large. By our induction assumption, we can find $j< n$ and $w\sqsupseteq v$ with $\age_\bK(w) = \max(\frak{p}')$ and so that $S_j$ is $(w, \frak{p}')$-large. But since $w\sqsupseteq u_{j+1}$, this is a contradiction. 
\end{proof}

The next proposition says that large sets of coding nodes code suitably rich substructures of $\b{K}$. The proof is straightforward, but a bit long, and can be skipped on a first reading.

\begin{prop}
	\label{Prop:InducedStructureLargeSets}
	Let $m< \omega$, let $X = \{x_0,\dots,x_{d-1}\}\subseteq \ct^\bK(m)$, and write $\rho = \sort(X)$. For each $i< d$, let $\frak{p}_i\in \sf{Full}_{\rho(i)}$ and $S_i\subseteq \im(c^\bK)\cap \succ(x_i)$ be such that $S_i$ is $(x_i, \frak{p}_i)$-large for each $i< d$. Setting $S = \bigcup_{i<d} S_i$ and viewing $(c^\bK)^{-1}(S)$ as an $\c{L}_d$-structure in the natural way, we have $\age((c^\bK)^{-1}(S)) = \age_\bK(X)$. 
\end{prop}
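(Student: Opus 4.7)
The plan is to verify the two inclusions defining $\age{c^{-1}(S)} = \c{K}(X)$ separately.

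For $\age{c^{-1}(S)}\subseteq \c{K}(X)$: given any finite $\b{B}\subseteq c^{-1}(S)$, each $a\in \b{B}$ has $c(a)\sqsupseteq x_{V^\b{B}(a)}$, so $\b{B}[X]$ is isomorphic to the $\c{L}$-substructure of $\b{K}$ induced on $\b{K}_m\cup \b{B}$, and hence lies in $\c{K}$.

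For the reverse inclusion, I would induct on $|\b{B}|$ to show that every $\b{B}\in \c{K}(X)$ embeds into $c^{-1}(S)$. To run the induction cleanly, I strengthen the hypothesis: each $S_i$ may be an arbitrary $(y_i,\frak{p}_i)$-large set with $y_i\sqsupseteq x_i$ and $\c{K}(y_i)=\max(\frak{p}_i)$, and the embedding may be required to place each vertex above an arbitrarily prescribed level $N<\omega$. For the inductive step, I pick $b\in \b{B}$, write $i=V^\b{B}(b)$, and place $f(b)$ first. For each $i'<d$, iterating the definition of $(y_{i'},\frak{p}_{i'})$-largeness produces a chain $y_{i'}\sqsubseteq w^{(1)}_{i'}\sqsubseteq\cdots\sqsubseteq w^{(k_{i'})}_{i'}$ (with $k_{i'}=|\frak{p}_{i'}|-1$) along which $\c{K}$ descends $\frak{p}_{i'}$ and above whose top $S_{i'}$ is dense. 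I arrange the levels of the $w^{(k_{i'})}_{i'}$ below the intended level of $a$, and then pick $a\in c^{-1}(S_i)$ above $w^{(k_i)}_i$, above all $\ell(w^{(k_{i'})}_{i'})$, and above $N$. Density of $S_{i'}$ above $w^{(k_{i'})}_{i'}$ then yields coding nodes in $S_{i'}$ realizing any $\ct$-consistent passing number at level $a$, the requisite consistency being guaranteed by $\b{B}\in \c{K}(X)$. Refining each $S_{i'}$ to coding nodes above $w^{(k_{i'})}_{i'}$ whose passing number at $a$ matches the corresponding relation in $\b{B}$, and recovering largeness of each refinement via Lemma~\ref{Lem:LargePartitionRegular}, lets me invoke the inductive hypothesis on $\b{B}\setminus\{b\}$ to complete the embedding.

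The main obstacle is the case where several vertices of $\b{B}\setminus\{b\}$ sharing a common label $i'$ demand \emph{distinct} passing numbers at level $a$: Lemma~\ref{Lem:LargePartitionRegular} returns only one large partition piece of $S_{i'}$, while multiple may be needed. I expect to resolve this by interleaving the descent along $\frak{p}_{i'}$ with partitioning---either by descending only partway before partitioning, or by placing the affected vertices in a carefully chosen order---so that each partitioning step only needs to extract a single large piece per remaining label. The consistency provided by $\b{B}\in \c{K}(X)$ ensures this iterative scheme ultimately realizes all required passing numbers.
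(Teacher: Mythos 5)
The forward inclusion $\age{c^{-1}(S)}\subseteq \c{K}(X)$ is fine. The reverse inclusion has a genuine gap, and it is larger than the one you flag. The core problem is the order of operations in your inductive step: you descend each path $\frak{p}_{i'}$ \emph{all the way} to the density stage before placing any coding nodes above the resulting nodes $w^{(k_{i'})}_{i'}$. But $\b{B}\in\c{K}(X)$ only guarantees that $\b{B}$ is consistent with the classes $\max(\frak{p}_{i})=\c{K}(x_{i})$; after full descent the class sitting over label $i'$ is $\min(\frak{p}_{i'})$, and the relations $\b{B}$ requires among the remaining label-$i'$ vertices (and between them and $b$) need not be realizable there. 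Concretely, take $\c{K}=\c{G}_3$, $d=1$, $x_0=0^m$, $\frak{p}_0=\{\c{G}_3,\c{G}_2\}$, and $\b{B}$ a single edge. Any $w$ with $\c{K}(w)=\c{G}_2$ has all coding nodes above it pairwise non-adjacent, so once you descend to $w^{(1)}_0$ and place $f(b)$ above it, the node $\Left(w^{(1)}_0, f(b))^\frown 1$ that the second vertex would need is not in $\ct$: the ``requisite consistency'' is not guaranteed by $\b{B}\in\c{K}(X)$, because the age drop happened before, rather than because of, the placement of $f(b)$. The secondary issue you do flag (several vertices with the same label demanding distinct passing numbers at level $f(b)$, while Lemma~\ref{Lem:LargePartitionRegular} returns only one large piece) is also real, and your proposed fix by ``interleaving'' is a hope rather than an argument.

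The paper resolves both problems with a device absent from your proposal: an outer induction on $n=\max_i|\frak{p}_i|$, in which each label $i$ with $|\frak{p}_i|=n$ is \emph{duplicated} into $N\geq|\b{B}_i|$ pairwise incomparable nodes $x'_{i,0},\dots,x'_{i,N-1}$, each obtained by descending $\frak{p}_i$ one single step at a level above everything previously built, and each assigned to a \emph{single} vertex of $\b{B}_i$. This makes the one-step descents for distinct vertices occur at disjoint levels (so the resulting level set $Y$ still admits the relabelled structure $\b{C}$, via the displayed formula for $\c{K}(Y)$), and reduces the maximal path length by one. Only in the base case, where all paths are singletons, are vertices realized one at a time; there largeness means density, which---unlike largeness for longer paths---is preserved by restriction to an arbitrary successor in $\ct$, so the passing-number partition is harmless. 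If you want to salvage a single induction on $|\b{B}|$, you would have to build the duplication into your strengthened hypothesis (one large set per \emph{vertex} rather than per label, with the descents staggered), at which point you have essentially reconstructed the paper's argument.
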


\begin{proof}
	If $|\frak{p}_i| = 1$ for each $i< d$, the assumption on the $S_i$ just says that $S$ is dense in $\ct^\bK$ above each $x_i$. For each $\b{B}\in \age_\bK(X)$, we show that $\b{B}\in \age((c^\bK)^{-1}(S))$ by induction on the size of $\b{B}$ (for all $d$ simultaneously). If $\b{B}$ contains a single point $b$ with $V^\b{B}(b) = i$, the result is clear. Now suppose $\b{B}$ is an $\c{L}_d$-structureand that $|B|\geq 2$. Pick some $b\in B$, and write $V^\b{B}(b) = i$. Let $w_i\sqsupseteq x_i$ be any member of $S_i$, and for each $j\in d\setminus \{i\}$, we set $w_j = \Left(x_j, \ell(w_i))$. Write $W = \{w_0\lex\cdots\lex w_{d-1}\}$, and write $Y = \is_\bK(W) := \{y_0\lex\cdots\lex y_{\alpha-1}\}$. Note that since $\age_\bK(x_i) = \max(\frak{p}_i) = \min(P_{\rho(i)})$ (since $|\frak{p}_i| = 1$), we have $\age_\bK(X) = \age_\bK(W)$. Let $\b{C}$ be the $\c{L}_\alpha$-structure on underlying set $B\setminus \{b\}$ defined as follows. The $\c{L}^\sf{b}$-part of $\b{C}$ is induced from $\b{B}$. For the unary part, fix $a\in B\setminus \{b\}$, and suppose that $V^\b{B}(a) = j$ and  $R^\b{B}(a, b) = q$.  Then $w_j^\frown q\in \ct^\bK$. Now if $\beta< \alpha$ is such that $y_\beta = w_j^\frown q$, we set $V^\b{C}(a) = \beta$. Since $S$ is dense above each member of $Y$, our inductive hypothesis shows that viewing $(c^\bK)^{-1}(S\cap \succ(Y))$ as an $\c{L}_\alpha$-structure, we have $\b{C}\in \age((c^\bK)^{-1}(S\cap \succ(Y)))$. It follows that $\b{B}\in \age((c^\bK)^{-1}(S))$. 
	
	Now suppose we are given $\{\frak{p}_j: j< d\}$, where $|\frak{p}_j|\leq n$ for each $j< d$. Suppose we know the result (for all $d$) whenever the paths have size less than $n$. Write $I = \{i< d: |\frak{p}_i| = n\}$. Let $\b{B}\in \age_\bK(X)$, and write $\b{B}_i$ for the induced substructure on $\{b\in B: V^\b{B}(b) = i\}$. Fix some $N \geq \max\{|\b{B}_i|: i\in I\}$, and fix some $i\in I$. Since $S_i$ is $(x_i, \frak{p}_i)$-large, there is $x'_{i, 0}\sqsupseteq x_i$ so that $S_i$ is $(x'_{i, 0}, \frak{p}_i')$-large. If $x'_{i, r}$ has been defined, then since $S_i$ is $(x_i, \frak{p}_i)$-large and $\age_\bK(\Left(x_i, \ell(x'_{i, r}))) = \age_\bK(x_i) = \max(\frak{p}_i)$, there is $x'_{i, r+1}\sqsupseteq \Left(x_i, \ell(x'_{i, r}))$ so that $S_i$ is $(x'_{i, r+1}, \frak{p}_i')$-large. Continue until $x'_{i, N-1}$ is defined, and write $X_i = \{x'_{i, N-1}\lex\cdots\lex x'_{i, 0}\}$. 
	
	Do the same procedure for every $i\in I$, at each stage moving relevant points up leftmost far enough to be above anything previously considered, producing a set $X_i$ as above. If $i\in d\setminus I$, set $X_i = \{x_i\}$. Pick some level $L$ above everything we have considered, and set $Y = \bigcup_{i< d} \Left(X_i, L) = \{y_0\lex\cdots\lex y_{\alpha-1}\}$. Let $\pi\colon \alpha\to d$ be the map so that $y_\beta\sqsupseteq x_{\pi(\beta)}$ for each $\beta< \alpha$. Then by the construction of $Y$, we have 
	$$\age_\bK(Y) = \age_\bK(X){\cdot}\pi\cap \bigcap_{i\in I} \bigcap_{\beta\in \pi^{-1}(\{i\})} \max(\frak{p}_i'){\cdot}\iota_\beta^{-1}.$$
    Using $\b{B}$, we form an $\c{L}_\alpha$-structure $\b{C}$ on underlying set $B$ as follows. The $\c{L}^\sf{b}$-part is the same as $\b{B}$. For each $i\in d\setminus I$, let $\beta_i< \alpha$ be the unique member of $\pi^{-1}(\{i\})$, and set $V^\b{C}(b) = \beta_i$ for each $b\in B_i$. If $i\in I$, then for each $b\in B_i$, choose a distinct $\beta_b\in \pi^{-1}(\{i\})$, and set $V^\b{C}(b) = \beta_b$. Now $\b{C}\in \age_\bK(Y)$, so by our inductive hypothesis, $\b{C}\in \age((c^\bK)^{-1}(S\cap \succ(Y)))$. So also $\b{B}\in \age((c^\bK)^{-1}(S))$ as desired. 
\end{proof}

\subsection{Compatible pairs}
\label{Subsec:Pairs}

Our proof of Theorem~\ref{Thm:EmbDiary} will proceed by building $\phi\in \demb(\Theta, \Delta)$ by induction on the levels of $\Theta$. To get started, we must have $\phi(\frak{p}, \emptyset) \in \Delta_\frak{p}$ whenever $\Theta_\frak{p}\neq \emptyset$. However, this implies $\Delta_\frak{p}\neq \emptyset$; we need to prove this just from knowing that $\Delta$ codes $\bK$. This is Proposition~\ref{Prop:BRD_Lower_Bound_Singletons}, which provides a lower bound for the big Ramsey degrees of singleton structures and sets the stage for the inductive construction of $\phi$ in the next subsection. 

\textbf{For the rest of the section}, write $\psi\colon \im(c^\bK)\to \Delta$ for the map with $\psi(c^\bK(n)) = c^\Delta(n)$.

\begin{defin}
	\label{Def:CompatiblePairs}
	Suppose $\frak{p}\in \full_i$. We say that a pair $(u, v)\in \ct^\bK\times \Delta$ is \index{$\frak{p}$-compatible pair}\emph{$\frak{p}$-compatible} if $\age_\Delta(v) = \max(\frak{p})$ and $\psi^{-1}(\succ(v))$ is $(u, \frak{p})$-large. In particular, note that $\age_\bK(u) = \age_\Delta(v) = \max(\frak{p})$. Write $\pair(\frak{p})$ for the set of $\frak{p}$-compatible pairs.

    Given $v\in \Delta$, the \emph{path continuation} of $v$, denoted $\pcon(v)$, is the unique $\frak{p}\in \full$ so that $\age_\Delta(v) = \max(\frak{p})$ and  $v^\sf{p} = \argpath_\Delta(v)\cup \frak{p}$. Corollary~\ref{Cor:PairImpliesPath} will tell us that if $(u, v)\in \pair(\frak{p})$, then $\frak{p} = \pcon(v)$. We say that $(u, v)\in \ct^\bK\times \Delta$ is a \emph{compatible pair}  and write $(u, v)\in \pair$ if $(u, v)\in \pair(\pcon(v))$. 

    Given $d< \omega$, we say that functions $\sigma\colon d\to \ct^\bK$ and $\xi\colon d\to \Delta$ are \emph{compatible} if $(\sigma(i), \xi(i))$ is compatible for every $i< d$. In a mild abuse of notation, we simply write $(\sigma, \xi)\in \pair$ when this happens.  As a warning, we do not necessarily have $\age_\bK(\sigma) = \age_\Delta(\xi)$; indeed, if $\xi$ isn't level, then $\age_\Delta(\xi)$ isn't even defined. If $\xi$ is level, Propositions~\ref{Prop:InducedStructureLargeSets} and \ref{Prop:Diary_Correct_Age}(1) give us that $\age_\Delta(\xi) \supseteq \age_\bK(\sigma)$. \qed
\end{defin}

Notice that if $(u,v)\in \pair(\frak{p})$ and $u_0\sqsupseteq u$ satisfies $\age_\bK(u_0) = \age_\bK(u)$, then also $(u_0, v)\in \pair(\frak{p})$. 

We collect some straightforward ``pair-extension" properties and their corollaries. Before proving these, let us rephrase Proposition~\ref{Prop:Diary_Correct_Age} in a way that is more suitable for our setting.

\begin{lemma}
	\label{Lem:MostImageAge}
	Suppose $\sigma\colon d\to \ct^\bK$ is a function, and suppose $N > \max\{\ell^\Delta(n): n< \max\{\ell(\sigma(i)): i< d\}\}$. For each $i< d$, suppose $s_i\in \im(c^\bK)\cap \succ(\sigma(i))$ satisfies  $\ell(\psi(s_i)) \geq N$. Then letting $\xi\colon d\to \Delta$ be given by $\xi(i) = \pi_N\circ \psi(s_i)$, we have $\age_\Delta(\xi)\subseteq \age_\bK(\sigma)$. 
\end{lemma} 

\begin{proof}
	We have for each $n< \max\{\ell(\sigma(j)): j< d\}$ that $\psi(s_j)(\ell^\Delta(n)) = s_j(n)$. The result now follows from Proposition~\ref{Prop:Diary_Correct_Age}.
\end{proof}

\begin{prop}
	\label{Prop:LiftPairFixedLevel}
	Suppose $i< \sfU$, $\frak{p}\in \full_i$, and $(u, v)\in \pair(\frak{p})$.
	\begin{enumerate}
	\item 
	If $N> \ell(v)$, then there are $u_0\sqsupseteq u$ and $v_0\sqsupseteq v$ with $\ell(v_0) = N$ and $(u_0, v_0)\in \pair(\frak{p})$. 
	\item
	Suppose $|\frak{p}|\geq 2$. Then there is $(u', v')\in \pair(\frak{p}')$ with $u'\sqsupseteq u$ and $v'\sqsupseteq v$.
	\end{enumerate}
\end{prop}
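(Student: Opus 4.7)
Both parts will rest on partition regularity of largeness (Lemma~\ref{Lem:LargePartitionRegular}), combined with Propositions~\ref{Prop:InducedStructureLargeSets} and \ref{Prop:AgesInCritDiariesAbove}, which identify $(u,\frak{p})$-large sets with ``rich'' classes of structures above $u$, together with Lemma~\ref{Lem:MostImageAge}, which supplies upper bounds on ages in $T^\Delta$ along the image of $\psi$. A common observation used throughout: $(u,\frak{p})$-largeness is preserved under removing finitely many coding nodes and under intersection with $\succ(u)$, and by Proposition~\ref{Prop:InducedStructureLargeSets} it automatically produces coding nodes above $u$ of arbitrarily large level.

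For part~(1), given $N > \ell(v)$, I will first discard the finitely many $s \in \psi^{-1}(\succ(v))$ with $\ell(\psi(s)) < N$, then partition the remaining set into finitely many pieces indexed by $\psi(s)|_N \in T^\Delta(N) \cap \succ(v)$. Lemma~\ref{Lem:LargePartitionRegular} yields a piece $S_0$ which is $(u_0, \frak{p})$-large for some $u_0 \sqsupseteq u$ with $\c{K}(u_0) = \max(\frak{p})$; let $v_0$ be the corresponding node, so $\psi^{-1}(\succ(v_0)) \supseteq S_0$ is $(u_0, \frak{p})$-large. It remains to check $\cl_\Delta(v_0) = \max(\frak{p})$: the inclusion $\subseteq$ is immediate from Lemma~\ref{Lem:AgesInCritDiariesMono}, while the inclusion $\supseteq$ comes from applying Proposition~\ref{Prop:InducedStructureLargeSets} to $S_0 \cap \succ(u_0)$ to get $\age{c^{-1}(S_0 \cap \succ(u_0))} = \max(\frak{p})$, and then Proposition~\ref{Prop:AgesInCritDiariesAbove} to realize each $\b{B} \in \max(\frak{p})$ inside $\cl_\Delta(v_0)$ via the corresponding $\psi$-images of its ordered embeddings.

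For part~(2), the recursive form of $(u, \frak{p})$-largeness (using $|\frak{p}| \geq 2$) first supplies $w_0 \sqsupseteq u$ with $\c{K}(w_0) = \max(\frak{p}')$ such that $\psi^{-1}(\succ(v))$ is $(w_0, \frak{p}')$-large. The real work is to locate $v' \sqsupseteq v$ with $\cl_\Delta(v')$ \emph{exactly} $\max(\frak{p}')$; the $\supseteq$ containment comes for free as in part~(1), but $\cl_\Delta(v')$ could a priori still equal $\max(\frak{p})$. To force the upper bound, I will choose $N$ at the outset so that Lemma~\ref{Lem:MostImageAge} applies with $\sigma(0) = w_0$, namely $N > \max\{\ell(c_\Delta(\psi(n))) : n < \ell(w_0)\}$ and $N \geq \ell(v)$. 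Then I restrict to $S := \psi^{-1}(\succ(v)) \cap \succ(w_0) \cap \{s : \ell(\psi(s)) \geq N\}$ (still $(w_0, \frak{p}')$-large) and partition by $\psi(s)|_N$, obtaining from Lemma~\ref{Lem:LargePartitionRegular} a node $v' \in T^\Delta(N) \cap \succ(v)$ and $u' \sqsupseteq w_0$ with $\c{K}(u') = \max(\frak{p}')$ and a piece $S_1 \subseteq \psi^{-1}(\succ(v'))$ which is $(u', \frak{p}')$-large.

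The main obstacle lies precisely in this ordering. Because Lemma~\ref{Lem:LargePartitionRegular} gives no control over $\ell(u')$, if I were to partition first and only then invoke Lemma~\ref{Lem:MostImageAge} with $\sigma(0) = u'$, its hypothesis $N > \max\{\ell(c_\Delta(\psi(n))) : n < \ell(u')\}$ could easily fail. Fixing $\sigma(0) = w_0$ \emph{before} partitioning sidesteps this: for every $s \in S_1$ (which satisfies $s \in \succ(w_0)$ and $\ell(\psi(s)) \geq N$), Lemma~\ref{Lem:MostImageAge} gives $\cl_\Delta(v') = \cl_\Delta(\psi(s)|_N) \subseteq \c{K}(w_0) = \max(\frak{p}')$. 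The reverse inclusion is proved exactly as in part~(1), using Propositions~\ref{Prop:InducedStructureLargeSets} and \ref{Prop:AgesInCritDiariesAbove} applied to $S_1 \cap \succ(u')$. Hence $\cl_\Delta(v') = \max(\frak{p}')$, and $(u', v') \in \pair(\frak{p}')$ as required.
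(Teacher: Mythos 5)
Your proof is correct and follows essentially the same route as the paper's: partition the large set $\psi^{-1}(\succ(v))$ (minus a finite set) according to $\psi(s)|_N$, apply Lemma~\ref{Lem:LargePartitionRegular}, and verify the age of the selected node via Lemma~\ref{Lem:MostImageAge} for the upper bound and Propositions~\ref{Prop:InducedStructureLargeSets} and \ref{Prop:AgesInCritDiariesAbove} for the lower bound. The ordering issue you flag in part (2) — fixing $w_0$ and the corresponding $N$ from Lemma~\ref{Lem:MostImageAge} \emph{before} partitioning — is exactly the point the paper's proof is organized around.
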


\begin{proof}
	$(1)$ Write $\succ(v)\cap \Delta(N) = \{t_0,\dots,t_{d-1}\}$. There is a finite set $F$ for which we have
	$$\psi^{-1}(\succ(v))\setminus F= \bigcup_{j< d} \psi^{-1}(\succ(t_j)).$$ 
	By Lemma~\ref{Lem:LargePartitionRegular}, there is $u_0\sqsupseteq u$ and $j< d$ so that $\psi^{-1}(\succ(t_j))$ is $(u_0, \frak{p})$-large. It follows by Proposition~\ref{Prop:InducedStructureLargeSets} that $t_j := v_0$ must have $\age_\Delta(v_0) = \max(\frak{p})$, i.e.,\ $(u_0, v_0)\in \pair(\frak{p})$.

	$(2)$ Because $\psi^{-1}(\succ(v))$ is $(u, \frak{p})$-large, find $u_0\sqsupseteq u$ with $\age_\Delta(u_0) = \max(\frak{p}')$ and so that $\psi^{-1}(\succ(v))$ is $(u_0, \frak{p}')$-large. Using Lemma~\ref{Lem:MostImageAge}, find $N$ so that whenever $s\in \psi^{-1}(\succ(v))\cap \succ(u_0)$ and $\ell(\psi(s))> N$, we have $\age_\Delta(\pi_N\circ \psi(s))\subseteq \max(\frak{p}')$. Let us write
	$$S = \{s\in \psi^{-1}(\succ(v))\cap \succ(u): \ell(\psi(s))> N\}.$$
	Then $S$ is a cofinite subset of $\psi^{-1}(\succ(v))\cap \succ(u)$, a $(u_0, \frak{p}')$-large set; hence $S$ is also $(u_0, \frak{p}')$-large. Writing $\pi_N\circ \psi[S] = \{t_0,\dots,t_{d-1}\}$, let $S_j = \{s\in S: \pi_N\circ \psi(s) = t_j\}$. We can find $u'\sqsupseteq u_0$ so that some $S_j$ is $(u', \frak{p}')$-large. Set $v' = t_j$. It remains to check that $\age_\Delta(v') = \max(\frak{p}')$. By choice of $v'$ we must have $\age_\Delta(v')\subseteq \max(\frak{p}')$. The reverse inclusion follows from Propositions~\ref{Prop:InducedStructureLargeSets} and \ref{Prop:Diary_Correct_Age}.
\end{proof}

\begin{cor}
    \label{Cor:PairImpliesPath}
    Suppose $i< \sfU$, $\frak{p}\in \full_i$, and $(u, v)\in \pair(\frak{p})$. Then  $\frak{p} = \pcon(v)$.
\end{cor}

\begin{proof}
    First note that $\frak{p}\cap \argpath_\Delta(v) = \{\max(\frak{p})\}$. Our proof proceeds by induction on $|\frak{p}|$. When $|\frak{p}| = 1$, then $\argpath_\Delta(v) = v^\sf{p}\in \MP_i$, and the result is clear. Now suppose $|\frak{p}|\geq 2$. Use Proposition~\ref{Prop:LiftPairFixedLevel} to find $u'\sqsupseteq u$ and $v'\sqsupseteq v$ with $(u', v')\in \pair(\frak{p}')$. By our inductive assumption, $\frak{p}'\cup \argpath_\Delta(v') = v^\sf{p}$. But also $\frak{p}'\cup \argpath_\Delta(v') =  \frak{p}\cup \argpath_\Delta(v)$. 
\end{proof}

\begin{prop}
    \label{Prop:BRD_Lower_Bound_Singletons}
    $\Delta(0) = \MP\times \{\emptyset\}$. Furthermore, if $\xi\colon |\MP|\to \Delta(0)$ lists $\Delta(0)$ in $\lex$-order, then there is $\sigma\colon |\MP|\to \ct^\bK$ with $\age_\bK(\sigma) = \age_\Delta(\xi) = \age_\Delta(0)$  and $(\sigma, \xi)\in \pair$.  
\end{prop}

\begin{proof}
    Write $\MP = \{\frak{p}_0\leq_{\MP}\cdots \leq_{\MP} \frak{p}_{d-1}\}$, set $n_0 = 0$, and let $\sigma_0\colon d\to \ct^\bK$ be given by $\sigma_0(i) = (\sf{u}(\frak{p}_i), \emptyset)$. 
    
    Assume for some $i< d$ that $0 = n_0<\cdots < n_i$ and $\sigma_i\colon d\to \ct^\bK$ have been determined, and  $\ell(\sigma_i(j)) = n_{j+1}$ for $j< i$ and $\sigma_i(j) = \sigma_0(j)$ for $j\geq i$. Write $h = \sf{u}(\frak{p}_i)$, and first observe that $\im(c^\bK)\cap (\{h\}\times k^{<\omega})$ is $((h, 0^{n_i}), \frak{p})$-large. It follows from Lemma~\ref{Lem:LargePartitionRegular} that for some $u\sqsupseteq (h, 0^{n_i})$ with $\age_\bK(u) = \max(P_h)$ and some $\frak{q}\in \MP_h$, we have $(u, (\frak{q}, \emptyset))\in \pair(\frak{p}_i)$. By Fact~\ref{Fact:Basic_Diary_Facts}(2) and Corollary~\ref{Cor:PairImpliesPath}, we must have $\frak{q} = \frak{p}_i$. In particular, $(\frak{p}_i, \emptyset)\in \Delta(0)$. We set $n_{i+1} = \ell(u)$, $\sigma_{i+1}(i) = u$, and $\sigma_{i+1}(j) = \sigma_i(j)$ for $j\in d\setminus \{i\}$. 
 
    Set $\sigma = \sigma_d$; since at each stage, we moved up and leftmost in $\ct^\bK$ above all relevant nodes, we have $\age_\bK(\sigma) = \age_\bK(\sigma_0) = \age_\Delta(0)$. 
\end{proof}

\begin{rem}
    If $\bA\in \cK$ is a singleton with unary $i$, then Proposition~\ref{Prop:BRD_Lower_Bound_Singletons} recovers the result of Sauer \cite{Sauer03} that $\rm{BRD}(\bA, \cK) \geq |\MP_i|$.
\end{rem}

We end the subsection with the following strengthening of Proposition~\ref{Prop:LiftPairFixedLevel}, which allows us to run the proposition on pairs of functions while preserving the ages of said functions.

\begin{prop}
	\label{Prop:MovingPairsUpAgePreserve}
	Suppose $\sigma\colon d\to \ct^\bK$ and $\xi\colon d\to \Delta$ are functions with $(\sigma, \xi)\in \pair$. Let $N< \omega$ satisfy $N\geq \max(\{\ell(\xi(i)): i< d\}$ and $N > \{\ell^\Delta(n): n< \max\{\ell(\sigma(i)): i< d\}\}$.  
	Then there are functions $\sigma'\sqsupseteq \sigma$ and $\xi'\sqsupseteq \xi$ so that $\ell(\xi'(i)) = N$ for each $i< d$, $(\sigma', \xi')\in \pair$, and $\age_\bK(\sigma) = \age_\bK(\sigma') = \age_\Delta(\xi')$. 
\end{prop}

\begin{rem}
    Suppose $i< d$ and $\frak{p} = \pcon(\xi(i))$, so that $(\sigma(i), \xi(i))\in \frak{p}$. Then since $\xi'(i)\sqsupseteq \xi(i)$ and $\age_\bK(\sigma) = \age_\Delta(\xi')$, we will also have $\pcon(\xi'(i)) = \frak{p}$.
\end{rem}

\begin{proof}
	Suppose $i< d$ and that we have chosen $\sigma'(j)$ and $\xi'(j)$ for every $j< i$. If $\xi(i) = (h, 0^r)$ for some $r< \omega$ and $h\in \sfU_{non}$, we simply set $\xi'(i) = (h, 0^N)$ and $\sigma'(i) = \sigma(i)$. If $\xi(i)$ is not of this form, write $\frak{p} = \pcon(\xi(i))$, pick some $M> \max\left(\{\ell(\sigma'(j)): j< i\}\cup\{\ell(\sigma_j: j< d\}\right)$ and run Proposition~\ref{Prop:LiftPairFixedLevel} on $(\Left(\sigma(i), M), \xi(i))\in \pair(\frak{p})$ to obtain $\sigma'(i)\sqsupseteq \Left(\sigma(i), M)$ and $\xi'(i)\sqsupseteq \xi(i)$ with $\ell(\xi'(i)) = N$ and $(\sigma'(i), \xi'(i))\in \pair(\frak{p})$. Then $\age_\bK(\sigma') = \age_\bK(\sigma)$ by construction, the key here being that at each stage, we moved up and left beyond anything built so far before applying Proposition~\ref{Prop:LiftPairFixedLevel}. As $(\sigma', \xi')\in \pair$, we have  $\age_\Delta(\xi') \supseteq \age_\bK(\sigma')$. If $N$ is suitably large, Lemma~\ref{Lem:MostImageAge} implies $\age_\bK(\xi') = \age_\bK(\sigma)$.
\end{proof}

\subsection{The construction}
\label{Subsec:Construction}

We now prove Theorem~\ref{Thm:EmbDiary}, constructing $\phi\in \demb(\Theta, \Delta)$ level by level. We start by setting $\phi^+(\frak{p}, \emptyset) = (\frak{p}, \emptyset)$ for each $(\frak{p}, \emptyset)\in \Theta(0)$. Now fix $m< \rm{ht}(\Theta)$, and inductively assume we have defined $\phi\colon \Theta({<}m)\to \Delta$ and $\phi^+\colon \Theta({\leq}m)\to \Delta$. Write $\Theta(m) = \{t_0\lex\cdots\lex t_{d-1}\}$ and $\phi^+[\Theta(m)] = \{v_0\lex\cdots\lex v_{d-1}\}$. We assume that our construction to this point satisfies all of the following:
\begin{enumerate}
    \item 
    If $m\geq 1$, the map $\phi^+\colon \Theta(m)\to \Delta$ satisfies $\phi^+(v^\frown i) = \phi(v)^\frown i$ for every $v\in \Theta(m-1)$ and $i< k$ with $v^\frown i\in \Theta$, and furthermore, $\age_\Theta(m) = \age_\Delta(\phi^+[\Theta(m)])$.
    \item
    There are $\sigma_m\colon d\to \ct^\bK$ and $\xi_m\colon d\to \Delta$ with $\xi_m(i)\sqsupseteq v_i$ for each $i< d$, with $\im(\xi_m)$ a level set, with $\age_\Delta(\phi^+[\Theta(m)]) = \age_\Delta(\xi_m) = \age_\bK(\sigma_m)$, and with $(\sigma_m, \xi_m)\in \pair$. 
\end{enumerate}
When $m = 0$, we let $\xi_0(i) = v_i\in \MP\times \{\emptyset\}$ and $\sigma_0(i)$ be given by Proposition~\ref{Prop:BRD_Lower_Bound_Singletons}.

There are now three cases depending on whether $m\in \sp(\Theta)$, $\ac(\Theta)$, or $\cd(\Theta)$. 
\vspace{2 mm}

\textbf{Case 1:} $m\in \sp(\Theta)$. Suppose $t_i\in \Theta(m)$ is the splitting node. Write $t_i^\sf{p} = \frak{p}\in \MP$ and $t_i^\sf{u} = h< \sfU$. This case splits into three further sub-cases. Two of these cases are quite similar. In the first case, we have $h\in \sfU_{fr}$. Let $1\leq q< k$ and $j< \sfU$ witness that $h\in \sfU_{fr}$, i.e.\ that $\cK{\cdot}\gamma_{j, h, q} = \cK{\cdot}\tilde{\iota}_i$. In the second case, we have $h \in \sfU_{non}$, but with $t_i\neq (\frak{p}, 0^m)$.  We note that $\age_\Theta(t_i) = \age_\Delta(\xi_m(i)) = \age_\bK(\sigma_m(i))\subsetneq \max(P_h)$. It follows that $\sigma_m(i)^\sf{seq}\neq 0^{\ell(\sigma_m(i))}$. If $n< \ell(\sigma_m(i))$ is least with $\sigma_m(i)(n)\neq 0$, write $q = \sigma_m(i)(n)$, and write $c^\bK(n)^\sf{u} = j$. For these two cases, the proof is now identical with these choices of $q< k$ and $j< \sfU$. Find some $r> \max\{\ell(u): u\in \im(\sigma_m)\}$ so that $c^\bK(r) = (j, 0^r)$ and $\ell^\Delta(r) > \max\{\ell(\xi_m(\alpha)): \alpha< d\}$. It follows that we have
$$\age_\bK(\sigma_m(i)) = \age_\bK(\Left(\sigma_m(i), r+1)) = \age_\bK(\Left(\sigma_m(i), r)^\frown q).$$
In particular, writing $\frak{q} = \pcon(\xi_m(i))$, we have that both $(\Left(\sigma_m(i), r+1), \xi_m(i))$ and $(\Left(\sigma_m(i), r)^\frown q, \xi_m(i))$ are in $\pair(\frak{q})$. Using Proposition~\ref{Prop:MovingPairsUpAgePreserve}, find $w_0\sqsupseteq \Left(\sigma_m(i), r+1)$, $w_q\sqsupseteq \Left(\sigma_m(i), r)^\frown q$, and $x_0, x_q\sqsupseteq \xi_m(i)$ with $\ell(x_0) = \ell(x_q) = \ell^\Delta(r)+1$, $(w_a, x_a)\in \pair(\frak{q})$ for each $a\in \{0, q\}$, and $\age_\bK(\{w_0, w_q\}) = \sp(\max(\frak{q}), 0)$; note that $w_0\lex w_q$ or $w_q\lex w_0$ are both possible. For $a\in \{0, q\}$, note that since $w_a(r) = a$ and since $(w_a, x_a)\in \pair(\frak{q})$, we must have $x_a(\ell^\Delta(r)) = a$ and $x_q(\ell^\Delta(r)) = q$. In particular, $x_0\neq x_q$. We set 
\begin{align*}
    \phi(t_i) = x_0\wedge x_q
\end{align*}
 and hence $\wt{\phi}(m) = \ell(x_0\wedge x_q)$. Put $y_i = \phi(t_i)^\frown 0$ and $y_{i+1} = \phi(t_i)^\frown 1$. For $a\in \{0, q\}$, Write $w_a = \ol{w}_i$ or $\ol{w}_{i+1}$ depending on if $\pi_{\wt{\phi}(m)+1}(x_a) = y_i$ or $y_{i+1}$. Define $\sigma'\colon (d+1)\to \ct^\bK$ and $\xi'\colon (d+1)\to \Delta$ be given by 
\begin{align*}
\sigma'(b) &= \begin{cases}
\sp(\sigma_m, i)(b) \quad &\text{if } b\not\in \{i, i+1\}\\
\ol{w}_b \quad &\text{if } b\in \{i, i+1\}
\end{cases}\\
\xi'(b) &= \begin{cases}
\sp(\xi_m, i)(b) \quad &\text{if } b\not\in \{i, i+1\}\\
y_b \quad &\text{if } b\in \{i, i+1\}.
\end{cases}
\end{align*}

We note that $\age_\bK(\sigma') = \sp(\age_\bK(\sigma_m), i) = \sp(\age_\Theta(m), i)$ and that $(\sigma', \xi')\in \pair$. Now use Proposition~\ref{Prop:MovingPairsUpAgePreserve} on $(\sigma', \xi')$
using a suitably large $N> \wt{\phi}(m)+1$. Doing this, we obtain functions $\sigma_{m+1}\colon (d+1)\to \ct^\bK$ and $\xi_{m+1}\colon (d+1)\to \Delta(N)$ with $\sigma_{m+1}\sqsupseteq \sigma'$, $\xi_{m+1}\sqsupseteq \xi'$, $(\sigma_{m+1}, \xi_{m+1})\in \pair$, and $\age_\bK(\sigma') = \age_\bK(\sigma_{m+1}) = \age_\Delta(\xi_{m+1})$. For each $b\in d\setminus \{i\}$, set 
$$\phi(t_b) = \pi_{\wt{\phi}(m)}\circ \xi_{m+1}\circ ((d+1){\setminus}\{i\})(b).$$ 
We need to check that $\age_\Theta(m) = \age_\Delta(\phi[\Theta(m)])$. To do this, we define $\phi^+\colon \Theta(m+1)\to \Delta$  from $\phi|_{\Theta(m)}$ in the only way we can and verify that $\age_\Theta(m+1) = \age_\Delta(\phi^+[\Theta(m+1)])$. Note that if we write $\phi^+[\Theta(m+1)] = \{z_0\lex\cdots\lex z_d\}$, we have $\xi_{m+1}(b)\sqsupseteq z_b$ for each $b< d+1$. Hence, we have:
\begin{align*}
    \age_\Theta(m+1) &= \sp(\age_\Theta(m), i)\\
    &= \age_\bK(\sigma')\\
    &= \age_\Delta(\xi_{m+1})\\
    &\subseteq \age_\Delta(\phi^+[\Theta(m+1)]).
\end{align*}
The reverse inequality follows from the observation that
\begin{align*}
    \age_\Delta(\phi^+[\Theta(m+1)]) &\subseteq \sp(\age_\Delta(\phi[\Theta(m)]), i)\\
    &= \sp(\age_\Theta(m), i)\\
    &= \age_\Theta(m+1).
\end{align*}

In the third case, we have $h\in \sfU_{non}$ and $t_i = (\frak{p}, 0^m)$. Since we have $\age_\Delta(\xi_m) = \age_\Delta(\phi^+[\Theta(m)])$, we must also have $\xi_m(i)^\sf{seq} = 0^{\ell(\xi_m(i))}$. We first apply Proposition~\ref{Prop:LiftPairFixedLevel} to find $u\sqsupseteq \sigma_m(i)$ and $v\sqsupseteq \xi_m(i)$ with $(u, v)\in \pair(\frak{p}')$. As $\age_\Delta(v) = \max(\frak{p}')\subsetneq \max(\frak{p})$, we have $v^\sf{seq}\neq 0^\ell(v)$. If $n< \ell(v)$ is least with $v(n)\neq 0$, then $n> \ell(\xi_m(i))$, and we set 
$$\phi(t_i) = v|_n$$
and hence $\wt{\phi}(m) = n$. Form the functions $\sigma'\colon (d+1)\to \ct^\bK$ and $\xi'\colon (d+1)\to \Delta$ as follows.
\begin{align*}
    \sigma'(b) &= \begin{cases}
    \sp(\sigma_m, i)(b) \quad &\text{if } b\neq i+1\\
    u \quad &\text{if } b = i+1
    \end{cases}\\
    \xi'(b) &= \begin{cases}
    \sp(\xi_m, i)(b) \quad &\text{if } b\neq i+1\\
    v \quad &\text{if } b = i+1.
    \end{cases}
\end{align*}
We note that $(\sigma', \xi')\in \pair$. From here, the definitions of $\sigma_{m+1}$, $\xi_{m+1}$, and $\phi(t_b)$ for $b\in d\setminus \{i\}$ as well as the verification that $\age_\Theta(m+1) = \age_\Delta(\phi^+[\Theta(m+1)])$ are identical to the first two cases.  
\vspace{2 mm}

\textbf{Case 2:} $m\in \ac(\Theta)$. Write $\rho = \sort(\Theta(m))$, and write $e\colon d_0\to d$ for the unique increasing injection with $(\age_\Theta(m){\cdot}e, \age_\Theta(m+1){\cdot}e)\in \econ(\rho\circ e)$ (Proposition~\ref{Prop:Consecutive}). There are two cases. If $d_0 > 1$, we first find a function $\sigma'\colon d\to \ct^\bK$ with $\sigma'\sqsupseteq \sigma_m$ so that $\age_\bK(\sigma') = \age_\Theta(m+1)$. When $d_0> 1$, we still have $(\sigma', \xi_m)\in \pair$, so run Proposition~\ref{Prop:MovingPairsUpAgePreserve} on $(\sigma', \xi_m)$ with some suitably large $N$, obtaining functions $\sigma_{m+1}\colon (d+1)\to \ct^\bK$ and $\xi_{m+1}\colon (d+1)\to \Delta(N)$ with $\sigma_{m+1}\sqsupseteq \sigma'$, $\xi_{m+1}\sqsupseteq \xi_m$, $(\sigma_{m+1}, \xi_{m+1})\in \pair$, and $\age_\bK(\sigma') = \age_\bK(\sigma_{m+1}) = \age_\Delta(\xi_{m+1})$. To determine $\phi$, we search between the levels of $\xi_m$ and $\xi_{m+1}$ to find the level of the age-change, and call this level $\wt{\phi}(m)$, and then set $\phi(t_b) = \pi_{\wt{\phi}(m)}\circ \xi_{m+1}(b)$. Define $\phi^+|_{\Theta(m+1)}$ in the only way possible, and by construction $\age_\Theta(m+1) = \age_\Delta(\phi^+[\Theta(m+1)])$.

If $d_0 = 1$, suppose $e = \iota_i$ for some $i< d$. Set $\frak{p} = \pcon(\xi_m(i))$. We use Proposition~\ref{Prop:LiftPairFixedLevel} to find $(u, v)\in \pair(\frak{p}')$ with $u\sqsupseteq \sigma_m(i)$ and $v\sqsupseteq \xi(i)$. We then consider the functions $\sigma'$ and $\xi'$, which are identical to $\sigma_m$ and $\xi$ except we set $\sigma'(i) = u$ and $\xi'(i) = v$, and then apply Proposition~\ref{Prop:MovingPairsUpAgePreserve}. The rest of the proof is similar to the $d> 1$ case.
\vspace{2 mm}

\textbf{Case 3:} $m\in \cd(\Delta)$. Let $j< d$ be such that $t_j\in \cdnd(\Theta)$, and set $t_j^\sf{p} = \frak{p}\in \MP$, $t_j^\sf{u} = h< \sfU$. Write $\c{A} = \age_\Theta(m)$, so also $\age_\bK(\sigma_m) = \age_\Delta(\xi_m) = \c{A}$. We also write $\chi\colon (d-1)\to k$ for the function with $(t_{(d{\setminus}\{j\})(i)})^\frown \chi(i)\in \Theta(m+1)$ for each $i< d-1$. As $(\c{A}, j, \chi)$ is a controlled coding triple, item $2$ of Definition~\ref{Def:ControlledCodingTriple} gives us $\age_\Theta(\xi_m(j)) = \min(P_h)$, implying that $(\sigma_m(j), \xi_m(j))\in \pair(\min(P_h))$. So find $r< \omega$ so that $c^\bK(r)\sqsupseteq \sigma_m(j)$ and $c^\Delta(r)\sqsupseteq \xi_m(j)$. Define $\sigma'\colon (d-1)\to \ct^\bK$ by setting $\sigma'(i) = \Left(\sigma_m\circ (d{\setminus}\{j\})(i), r)^\frown\chi(i)$. Note that $\age_\bK(\sigma') = \c{A}{\cdot}\rm{Add}_{j, \chi}$, so item $1$ of Definition~\ref{Def:ControlledCodingTriple} yields $\age_\bK(\sigma') = \c{A}{\cdot}(d{\setminus}\{j\})$. 

We now run Proposition~\ref{Prop:MovingPairsUpAgePreserve} on $(\sigma', \xi_m\circ (d{\setminus}\{j\}))$ using some suitably large $N> r$. Doing this, we obtain functions $\sigma_{m+1}\colon (d-1)\to \ct^\bK$ and $\xi_{m+1}\colon (d-1)\to \Delta(N)$ with $\sigma_{m+1}\sqsupseteq \sigma'$, $\xi_{m+1}\sqsupseteq \xi_m\circ (d{\setminus}\{j\})$, $(\sigma_{m+1}, \xi_{m+1})\in \pair$, and $\age_\bK(\sigma') = \age_\bK(\sigma_{m+1}) = \age_\Delta(\xi_{m+1})$. We note that since $\sigma_{m+1}(i)\sqsupseteq \sigma'(i)$ for each $i < d-1$, we must have $\xi_{m+1}(i)(r) = \chi(i)$. We now set $\phi(t_j) = c^\Delta(r)$ (so $\wt{\phi}(m) = \ell^\Delta(r)$), and for $b\in d\setminus \{j\}$, we set 
\begin{align*}
\phi(t_b) = \begin{cases}
    \pi_{\wt{\phi}(m)}\circ \sp(\xi_{m+1}, j)(b) \quad \text{if }j< d-1,\\
    \pi_{\wt{\phi}(m)}\circ \xi_{m+1}(b) \quad \text{if }j = d-1.
\end{cases} 
\end{align*}
With this definition of $\phi|_{\Theta(m)}$, $\phi^+|_{\Theta(m+1)}$ is well defined. Lastly, to show that $\age_\Delta(\phi[\Theta(m)]) =  \c{A}$, write  $\c{B} = \age_\Delta(\phi[\Theta(m)])$. Then $\c{B}\subseteq \c{A}$, $\c{B}{\cdot}(d{\setminus}\{j\}) = \c{A}{\cdot}(d{\setminus}\{j\})$, and $\c{B}{\cdot}\rm{Add}_{j, \chi} = \c{A}{\cdot}\rm{Add}_{j, \chi}$. As $(\c{A}, j, \chi)$ is a controlled coding triple, item $3$ of Definition~\ref{Def:ControlledCodingTriple} gives us that $\c{A} = \c{B}$. Lastly, $\age_\Theta(m+1) = \age_\Delta(\phi^+[\Theta(m+1)])$ by the properties of controlled coding triples.

\subsection{Degenerate unaries}
\label{Subsec:Degenerates}

Up until now, we have been assuming that every $i< \sfU$ is a non-degenerate unary predicate (Definition~\ref{Def:NonDegenerate}). We now briefly discuss the modifications that need to be made when there are degenerate unary predicates; luckily, these modifications are all very straightforward.

\textbf{We now assume} that for some $\sfU_{deg}\subseteq \sfU$, every $i\in \sfU_{deg}$ is degenerate. We have for each $i\in \sfU_{deg}$ that $\{a\in \b{K}: U^\bK(a) = i\}$ is an infinite set of isolated points. Letting $\b{K}_{nd} = \{a\in \b{K}: U^\bK(a)\not\in \sfU_{deg}\} \subseteq \b{K}$ denote the induced substructure on the points with non-degenerate unary predicate, we have $\aut(\b{K}) \cong \aut(\b{K}_{nd})\times (S_\infty)^{|\sfU_{deg}|}$. If $\bA\leq \bK$ is enumerated, then for every $a< A$ with $U^\bA(a) \in \sfU_{deg}$, we have $c^\bA(a)^{\sf{seq}} = 0^a$. 

We now collect all of the new conventions and definition modifications needed to define diaries and prove Theorem~\ref{Thm:EmbDiary} in this slightly more general setting.
\begin{enumerate}
    \item 
    Degenerate unary predicates are not free; however, we reserve the notation $\sfU_{non}$ for the set of non-free \emph{and non-degenerate} unaries. Hence $\sfU = \sfU_{fr}\sqcup \sfU_{non}\sqcup \sfU_{deg}$, and similarly we write $\MP = \MP_{fr}\sqcup \MP_{non}\sqcup \MP_{deg}$. If $i\in \sfU_{deg}$, then $P_i = \{\c{K}{\cdot}\tilde{\iota}_i\}$ and $|\MP_i| = 1$.
    \item 
    In Definition~\ref{Def:Diary}, the levels $m< \rm{ht}(\Delta)$ with $|\is_\Delta(s)| = 1$ for every $s\in \Delta(m)$ now can be either age-change levels or a ``degenerate coding" level. Writing $\rm{Dcd}(\Delta)$ for the set of degenerate coding levels, we have $\rm{ht}(\Delta) = \cd(\Delta)\sqcup \rm{DCd}(\Delta)\cup \sp(\Delta)\cup \ac(\Delta)$.
    
    If $m\in \sp(\Delta)\cup \cd(\Delta)$ and $t\in \Delta(m)$ is the splitting or coding node, we demand $t^\sf{u}\not\in \sfU_{deg}$. Hence if $s\in \Delta(m)$ has $s^\sf{u}\in \sfU_{deg}$, then $s^\sf{seq} = 0^m$. 
    
    If $m\in \rm{DCd}(\Delta)$, then we designate exactly one $t\in \Delta(m)$ with $t^\sf{u}\in \sfU_{deg}$ to be a coding node. We can choose whether or not $t$ is terminal. If $t$ is terminal, then writing $S = \Delta(m)\setminus \{t\}$, we set $\Delta(m+1) = \{s^\frown 0: s\in S\}$ and $\age_\Delta(m+1) = \age_\Delta(S)$. If $t$ is non-terminal, we set $\Delta(m+1) = \{s^\frown 0: s\in \Delta(m)\}$ and $\age_\Delta(m+1) = \age_\Delta(m)$.  Hence $\cdnd(\Delta)$ now contains every terminal node of $\Delta$ (in particular, all non-degenerate coding nodes) as well as possibly some non-terminal degenerate coding nodes.

    \item 
    In the proof of Proposition~\ref{Prop:Induced_Subdiary}, we explicitly define $\cdnd(\Delta\|_X) = \im(\pi_{\Delta, X})$.

    \item 
    The construction of the previous subsection now has a case $4$, namely $m\in \rm{DCd}(\Theta)$. If $t_j\in \Theta(m)$ is the coding node, find a suitably large $N$ with $\Left(\xi_m(j), N-1)\in \cdnd(\Delta)$. Apply Proposition~\ref{Prop:MovingPairsUpAgePreserve} on $(\sigma_m, \xi_m)$ with level $N$ to obtain $(\sigma_{m+1}, \xi_{m+1})$, and for each $i< d$, set $\phi(t_i) = \pi_{N-1}\circ \xi_{m+1}(i)$. Define $\phi^+|_{\Theta(m+1)}$ as we must, depending on if $t_j$ is terminal.
\end{enumerate}

\section{Upper bounds}
\label{Sec:Upper_bounds}

This section uses the ``coding tree Milliken theorem," Theorem~3.5 from \cite{Zuc22}, to show that the lower bounds we produced in Sections~\ref{Sec:Diaries} and \ref{Section:Proof_Of_Thm_EmbDiary} are sharp.

\textbf{For this section}, we take $\bK$ to be an enumerated structure. Furthermore, we choose this enumeration to be \emph{left dense} (see \cite{Zuc22} for the definition). For us, this amounts to saying that for every $t\in \ct^\bK$, there is $n\geq \rm{ht}(t)$ with $\Left(t, n) = c^\bK(n)$. Every \fr free amalgamation class admits an enumerated left-dense \fr limit. 

\begin{theorem*}[Theorem 3.5 from \cite{Zuc22}]
    For any enumerated $\b{A}\in \c{K}$, $r< \omega$, and coloring $\chi\colon\aemb(\ct^\bA, \ct^\bK)\to r$, there is $\phi\in \aemb(\ct^\bK, \ct^\bK)$ so that $\chi\circ \phi$ is constant.
\end{theorem*}

\begin{rem}
    Coding trees of enumerated structures are defined slightly differently in \cite{Zuc22} (see Section~\ref{Sec:Diaries} for a discussion); however, the proof of the theorem for the version we use here is almost identical.
\end{rem}

\subsection{Shapes of embeddings into enumerated structures}
\label{SubSec:Embedding_Shapes}

\begin{notation}
    Let $\cL' = \cL\cup \{\leq_\ell\}$. We can view an $\cL$-structure $\bA$ with $A\subseteq \omega$ as an $\cL'$-structure by letting $\leq_\ell$ be the usual order. If $\Theta$ is a diary, we can view $\str(\Theta)$ as an $\cL'$-structure where $\leq_\ell$ is the order of relative levels. We will refer to embeddings between these instances of $\cL'$-structures as  \emph{ordered} embeddings; any reference to embeddings without the word ``ordered" refers to as $\cL$-structures. If $\bA$ and $\bB$ are $\cL'$-structures as above, we write $\oemb(\bA, \bB)$ for the $\cL'$-embeddings (``O" for ``ordered") and $\emb(\bA, \bB)$ for the $\cL$-embeddings. \qed
\end{notation}

We recall some facts from \cite{Zuc22} that we will need going forward. 

\begin{fact}
\label{Fact:Shape_Facts}
    \begin{enumerate}
        \item 
        By Theorem~2.10 of \cite{Zuc22}, $\aemb(\ct^\bY, \ct^\bK)\neq \emptyset$ for any enumerated structure $\bY\leq \bK$. If $\bE, \bY\leq \bK$ are enumerated structures and $\phi\in \aemb(\ct^\bE, \ct^\bY)$, note that $\wt{\phi}\in \oemb(\bE, \bY)$.
        \item 
        Conversely, fix enumerated structures $\bE, \bY\leq \bK$. Given $S\subseteq Y$, we call $S$ \emph{$\bY$-closed} if $\crit^\bY(c^\bY[S]) = S$, and the \emph{closure of $S$ in $\bY$}, denoted $\ol{S}^\bY$, is the smallest $\bY$-closed subset of $Y$ containing $S$. If $\sigma\in \oemb(\bE, \bY)$, then $\sigma = \wt{\phi}$ for some $\phi\in \aemb(\ct^\bE, \ct^\bY)$ iff $\im(\sigma) = \ol{\im(\sigma)}^\bY$; this is more-or-less Proposition~4.2 and Definition~5.1 from \cite{Zuc22}. Furthermore, \emph{because in this paper we define $\ct^\bE = \im(c^\bE){\downarrow}$}, such a $\phi$ is unique. \qed
    \end{enumerate}
\end{fact} 

Because of this fact, we can work interchangeably with either aged embeddings of $\ct^\bE$ into $\ct^\bY$ or ordered embeddings of $\bE$ into $\bY$ with $\bY$-closed image.

\begin{defin}
    \label{Def:Embedding_Shape}
    Fix an enumerated structure $\bY\leq \bK$, a structure $\bA\leq \bY$, and $f\in \emb(\bA, \bY)$. The \emph{$\bY$-shape} of $f$, denoted $\shp_\bY(f)$, is the unique pair $(\bE, g)$ such that:
    \begin{enumerate}
        \item 
        $\bE\leq \bY$ is an enumerated structure and $g\in \emb(\bA, \bE)$.
        \item 
        There is (a unique) $\alpha\in \oemb(\bE, \bY)$ with $\alpha\circ g = f$ and $\im(\alpha) = \ol{\im(f)}^{\bY}$. 
    \end{enumerate}
    In particular, $\bE\cong \bY|_{\ol{\im(f)}^{\bY}}$. \qed
\end{defin}

\begin{fact}
\label{Fact:Upper_Bounds}
    Reasoning as in the proof of Theorem~4.6 from \cite{Zuc22}, we have for any $\bA\in \cK$ and for any $\eta\in \emb(\bK, \bK)$ that $\rm{BRD}(\bA, \cK)\leq |\{\shp_\bK(\eta\circ f): f\in \emb(\bA, \bK)\}|$. \qed
\end{fact}

The main theorem of this section is an application of Fact~\ref{Fact:Upper_Bounds}. Recall, given $\bA\in \cK$, the set $D_\bA$ and the coloring $\shp_{\Delta, \bA}\colon \emb(\bA, \str(\Delta))\to D_\bA$ from Definition~\ref{Def:Diary_Shape}.

\begin{theorem}
\label{Thm:Upper_Bound}
    Fix a diary $\Delta$ with $\str(\Delta)\cong \bK$. There is $\eta\in \emb(\str(\Delta), \bK)$ such that for any $\bA\in \cK$ and $f\in \emb(\bA, \str(\Delta))$, $\shp_\bK(\eta\circ f)$ depends only on $\shp_{\Delta}(f)$. Thus $\rm{BRD}(\bA, \cK)\leq |D_\bA|$.
\end{theorem}

We prove Theorem~\ref{Thm:Upper_Bound} in the next subsection. Combined with Theorem~\ref{Thm:EmbDiary}, we have $\rm{BRD}(\bA, \cK) = |D_\bA|$, proving Theorem~\ref{thm:main_intro}. Note that Theorems~\ref{Thm:Expansion_Codes_Diary} and \ref{Thm:Upper_Bound} together imply that whenever $\Delta$ is a diary with $\str(\Delta)\cong \bK$, then $\str^*(\Delta)$ (Definition~\ref{Def:Diary_Exp}) is a big Ramsey structure. The following proves Theorem~\ref{thm:main_intro_2}.

\begin{theorem}
    \label{Thm:Strong_BRS}
    Whenever $\Delta$ is a diary with $\str(\Delta)\cong \bK$, then $\str^*(\Delta)$ is a strong big Ramsey structure.
\end{theorem}

\begin{proof}
    Write $\bS^* = \str^*(\Delta)$. Fix $\bA^*\in \age(\bS^*)$ and a coloring $\gamma\colon \emb(\bA^*, \bS^*)\to 2$. Writing $\bA = \bA^*|_\cL$, we obtain a coloring $\ol{\gamma}\colon \emb(\bA, \str(\Delta))\to 2\cup \bS^*(\bA){\setminus}\{\bA^*\}$ by setting 
    $$\ol{\gamma}(f) = \begin{cases}
        \bS^*{\cdot}f \quad &\text{if } \bS^*{\cdot}f\neq \bA^*\\
        \gamma(f) \quad &\text{if } \bS^*{\cdot}f = \bA^*.
    \end{cases}$$ 
    By Theorem~\ref{Thm:Upper_Bound}, there is $\eta\in \emb(\str(\Delta), \str(\Delta))$ such that $|\im(\ol{\gamma}\circ \eta)|\leq |\bS^*(\bA)|$, i.e.\ so that we can avoid one of the colors of $\ol{\gamma}$. Write $\Theta = \Delta\|_{\im(\eta)}$. As $\str(\Theta)\cong \bK$, Theorem~\ref{Thm:EmbDiary} implies that the avoided color is one of the two colors from the $\bS^*{\cdot}f = \bA^*$ case. Write $\phi = \phi_{\Delta, \im(\eta)}$, and using Theorem~\ref{Thm:EmbDiary}, fix some $\psi\in \demb(\Delta, \Theta)$. Then $\phi\circ \psi|_{\str(\Delta)}:= \sigma\in \emb(\bS^*, \bS^*)$ satisfies that $\gamma\circ \sigma$ is constant.
\end{proof}

\begin{rem}
    Fix a \fr class $\cM$ with $\flim(\cM) = \bM$. If $\bM^*$ is an expansion of $\bM$, call $\bM^*$ \emph{recurrent} if for every $\eta\in \emb(\bM, \bM)$, there is $\theta\in \emb(\bM, \bM)$ with $\bM^*{\cdot}(\eta\circ \theta) = \bM^*$. Hence Theorem~\ref{Thm:EmbDiary} asserts that $\str^*(\Delta)$ is a recurrent expansion of $\str(\Delta)$. The argument in Theorem~\ref{Thm:Strong_BRS} shows generally that if $\bM^*$ is a recurrent big Ramsey structure for $\cM$, then it is strong. In a partial converse, one can show that if $\bM^*$ is a strong big Ramsey structure in a finite relational language, then $\bM^*$ is recurrent. 
\end{rem}

\subsection{Proof of Theorem~\ref{Thm:Upper_Bound}}

\textbf{We fix} a diary $\Delta$ which codes $\bK$. \textbf{To ease notation}, we assume $\sfU_{deg} = \emptyset$. The proof is almost identical when $\sfU_{deg} \neq \emptyset$.

Fix a function $\sf{f}\colon \MP\to \sfU\times (k{\setminus}\{0\})$ such that if $\frak{p}\in \MP$, $\sf{u}(\frak{p}) = i< \sfU$, and $\sf{f}(\frak{p}) = (j, q)$, then if $i\in \sfU_{fr}$, we have $\cK{\cdot}\gamma_{j, i, q} = \cK{\cdot}\tilde{\iota}_i$ (Definition~\ref{Def:Free}), and if $i\in \sfU_{non}$, then $\cK{\cdot}\gamma_{j, i, q} = \max(\frak{p}')$ (Fact~\ref{Fact:NonFree_Unaries}). We can arrange for $i\in \sfU_{fr}$ that $|\sf{f}[\MP_i]| = 1$, so simply write $\sf{f}(i)$. 

Write $\econ = \bigsqcup_\rho \econ(\rho)$, where the union is taken over all path sorts. By Fact~\ref{Fact:Bound_Essential} and Definition~\ref{Def:Path_Sorts}, there are only finite many path sorts $\rho$ with $\econ(\rho)\neq \emptyset$, so in particular, $\econ$ is finite. For each $(\cA, \cB)\in \econ(\rho)$, fix a gluing $\gamma_{\cA, \cB} = (\bX_{\cA, \cB}, \rho, \eta_{\cA, \cB})$ such that the following both hold.
\begin{enumerate}
    \item 
    For any gluing $\gamma_\cA$ with $\cK{\cdot}\gamma_\cA = \cA$, the gluing $\delta = \gamma_\cA\sqcup \gamma_{\cA, \cB}$ satisfies $\cK{\cdot}\delta = \cB$. 
    \item 
    $X_{\cA, \cB}$ has minimum possible cardinality.
\end{enumerate}
To see that a gluing as in item $1$ exists, one can take any $\gamma_\cB$ with $\cK{\cdot}\gamma_\cB = \cB$. In the case when $\dom(\rho) = 1$, $\rho(0) = \frak{p}\in \MP_i$ for $i\in \sfU_{non}$, and $\cA = \max(\frak{p})$, then writing $\sf{f}(\frak{p})= (j, q)$, we arrange that $\gamma_{\cA, \cB} = \gamma_{j, i, q}$.  

We will use $\sf{f}$ and the $\gamma_{\cA, \cB}$ to systematically build for every diary $\Theta$ an enumerated structure $\bY^\Theta\leq \bK$ and $\sigma^\Theta\in \oemb(\str(\Theta), \bY^\Theta)$. When $\Theta = \Delta$, we omit the $\Delta$-superscript, and we will show that for any $\bA\in \cK$ and $f\in \emb(\bA, \str(\Delta))$,  $\shp_{\bY}(\sigma\circ f)$ depends only on $\shp_{\Delta}(f)$, which will suffice to prove Theorem~\ref{Thm:Upper_Bound} (pick any $\phi\in \aemb(\ct^{\bY}, \ct^\bK)$ and set $\eta = \wt{\phi}\circ \sigma$). 

To build $\bY^\Theta$, we first build an $\cL'$-structure $\bZ^\Theta\supseteq \str(\Theta)$ such that $\leq_\ell$ has order type $|Z_\Theta|$. We will let $\bY^\Theta$ be the enumerated $\cL$-structure which is $\cL'$-isomorphic to $\bZ^\Theta$ and $\sigma^\Theta\colon \str(\Theta)\to \bY^\Theta$ be induced by the inclusion $\str(\Theta)\subseteq \bZ^\Theta$. To build $\bZ^\Theta$, we start with $\str(\Theta)$ and for each $m< \rm{ht}(\Theta)$, we attach a new finite $\cL'$-structure $\bZ^\Theta_m$ to $\str(\Theta)$. So suppose $m< \rm{ht}(\Theta)$ and we have constructed an $\cL'$-structure $\str(\Theta)\cup \bigcup_{i< m}\bZ^\Theta_i$ embeddable into $\bK$ as an $\cL$-structure and so that $\leq_\ell$ is either a finite order or an $\omega$-order. If $m\in \cd(\Theta)$, we set $\bZ^\Theta_m = \emptyset$. The other two cases are:
\vspace{2 mm}

\textbf{Case 1:} $m\in \sp(\Theta)$. Let $Z^\Theta_m = \{z^\Theta_m\}$ for some new point $z^\Theta_m$. Suppose $t\in \Theta(m)$ is the splitting node, with $t^\sf{u} = i$ and $t^\sf{p} = \frak{p}$. Write $\sf{f}(\frak{p}) = (j, q)$. We set $U^{\bZ^\Theta_m}(z^\Theta_m) = j$ and for each $y\in \cdnd(\Theta)$ with $y\sqsupseteq s^\frown 1$, we set $R^{\bZ^\Theta}(y, z^\Theta_m) = q$. All other binary relations  with $z^\Theta_m$ are zero. Note by the definition of $\sf{f}$, case $2$ of Definition~\ref{Def:SplittingClass}, and Proposition~\ref{Prop:Diary_Correct_Age} that $\str(\Theta)\cup\bigcup_{-1\leq i\leq m} \bZ^\Theta_i\leq \bK$. We put $z^\Theta_m$ as $\leq_\ell$-small as possible while being $\leq_\ell$-above all members of $\cdnd(\Theta)\cap \Theta({<}m)$ and $\bigcup_{i< m} Z^\Theta_i$.
\vspace{2 mm}

\textbf{Case 2:} $m\in \ac(\Theta)$. Suppose the age change is essential on $S = \{s_0\lex\cdots\lex s_{d-1}\}\subseteq \Theta(m)$, and write $\rho = \sort(S)$. Set $\age_\Theta(S) = \cA$ and $\age_\Theta(\{s_i^\frown 0: i< d\}) = \cB$ so that $(\cA, \cB)\in \econ(\rho)$. Let $Z^\Theta_m = X_{\cA, \cB}\times \{z^\Theta_m\}$ for some new point $z^\Theta_m$, and let $\bZ^\Theta_m$ be isomorphic to $\bX_{\cA, \cB}$ in the obvious way, with the $\leq_\ell$ order induced from the usual $\leq$-order on $X_{\cA, \cB}\subseteq \omega$. For each $x\in X_{\cA, \cB}$ and $y\in \cdnd(\Theta)$ with $s_i\sqsubseteq y$ for some $i< d$, we set $R^{\bZ^\Theta}(y, (x, z^\Theta_n)) = \eta_{\cA, \cB}(i, x)$. There are no other new non-zero binary relations. By our choice of $\gamma_{\cA, \cB}$ and Proposition~\ref{Prop:Diary_Correct_Age}, we have $\str(\Theta)\cup \bigcup_{i\leq m}\bZ^\Theta_m\leq \bK$. We put $Z^\Theta_m$ as an $\leq_\ell$-consecutive interval as $\leq_\ell$-small as possible while being $\leq_\ell$-above all members of $\cdnd(\Theta)\cap \Theta({<}m)$ and $\bigcup_{i< m} Z^\Theta_i$.
\vspace{2 mm}

To finish defining $\bZ^\Theta$, we attach one last finite $\cL'$-structure $\bB^\Theta$. Given $i\in \sfU_{fr}$, write $\sfP^\Theta_i = \{s^\sf{p}: s\in \Theta_i\}$, and write $\sfQ^\Theta_i = \sfP^\Theta_i\setminus \{\min_{\leq_{\MP}}(\sfP^\Theta_i)\}$. We let 
\begin{align*}
\sfR^\Theta_i = \begin{cases}
    \sfP^\Theta_i\quad &\text{if } \exists m< \rm{ht}(\Theta)\,\, \exists x\in Z^\Theta_m\,\, U^{\bZ^\Theta_m}(x) = i,\\
    \sfQ^\Theta_i\quad &\text{else }
\end{cases} 
\end{align*}
We set $B^\Theta = (\bigcup_{j\in \sfU_{fr}} \sfR^\Theta_j)\times \{b^\Theta\}$ for some new point $b^\Theta$. In $\bB^\Theta$, there are no non-zero binary relations. If $\frak{p}\in \sfR^\Theta_i$ and $\sf{f}(i) = (j, q)$, we set $U^{\bB^\Theta}((\frak{p}, b^\Theta)) = j$. If $y\in \cdnd(\Theta)\cap \Theta_\frak{p}$, we set $R^{\bZ^\Theta}(y, (\frak{p}, b^\Theta)) = q$. There are no other new non-zero binary relations; by the definition of $\sf{f}$, the resulting structure embeds into $\bK$. On $\bB^\Theta$, the $\leq_\ell$-order is induced from $\leq_\MP$, and in $\bZ^\Theta$, we put $B^\Theta$ $\leq_\ell$-below everything else.

This finishes the construction of $\bZ^\Theta$, so also of $\bY^\Theta$ and $\sigma^\Theta\colon \str(\Theta)\to \bY^\Theta$. We mildly abuse notation and also write $\sigma^\Theta\colon \bZ^\Theta\to \bY^\Theta$ for the $\cL'$-isomorphism. We will primarily make use of $\bY^\Delta$, and simply write $\bY, \bZ, \bZ_m, z_m, b, \sigma$, etc. when referring to $\Delta$. 

The intuition behind the construction of $\bY$ is to as much as possible build $\Delta$ into a coding tree of an enumerated structure. With this idea in mind, let us investigate $\bY$ more closely.  We start by defining $h\colon \omega\to \omega$ by setting
$$h(m) = \left|B\cup \left(\cdnd(\Delta)\cap \Delta({<}m)\right)\cup \bigcup_{i< m} Z_i\right|.$$
We also write $\xi:= c^\bY\circ \sigma\colon Z\to \ct^\bY$. We now collect facts about $\bY$, $h$, and $\xi$ we will need going forward. 
\begin{fact}
\label{Fact:Y_Facts}
\begin{enumerate}
    \item 
    If $s\in \cdnd(\Delta)$, then $\ell(\xi(s)) = h(\ell(s))$. If $x\in Z_n$ is $\leq_\ell$-least, then $\ell(\xi(x)) = h(n)$. 
    \item 
    If $n< \omega$ and $x\in Z_n$, then $(\xi(x)|_{h(n)})^\sf{seq} = 0^{h(n)}$. 
    \item 
    Fix $i< \sfU$ and $s\in \cdnd(\Delta_i)$.
    \begin{itemize}
        \item 
        If $i\in \sfU_{fr}$ and $\sfR_i = \sfP_i$, then for every $s\in \cdnd(\Delta_i)$, we have $\xi(s)^\sf{seq}\not\sqsupseteq 0^{h(0)}$. 
        \item 
        If $i\in \sfU_{non}$ and $s^\sf{seq}\neq 0^\ell(s)$, then if $m< \ell(s)$ is largest with $\pi_m(s)^\sf{seq} = 0^m$, then $m\in \sigma[Z_n]$ for some $n< \omega$.
    \end{itemize}
    
    \item 
    Fix $s, t\in \cdnd(\Delta)$.
    \begin{itemize}
        \item 
        If $s, t\in \cdnd(\Delta)$ and $s^\sf{p} = t^\sf{p}$, then $\ell(\xi(s)\wedge \xi(t)) = h(\ell(s\wedge t))$. 
        \item
        If $s^\sf{u} = t^\sf{u}\in \sfU_{fr}$ and $s^\sf{p}\neq t^\sf{p}$, then $\ell(\xi(s)\wedge \xi(t)) = \sigma((\min_{\leq_\MP}(s^\sf{p}, t^\sf{p}), b)) < h(0)$. 
        \item
        If $s^\sf{u} = t^\sf{u}\in \sfU_{non}$ and $s^\sf{p}\neq t^\sf{p}$, then if $m< \omega$ is largest with $\age_\Delta(s) = \age_\Delta(t) = \cK{\cdot}\tilde{\iota_i}$, then $\ell(\xi(s)\wedge \xi(t)) = h(m)$. 
     \end{itemize}
    \item 
    For any $d< \omega$, function $e\colon d\to \cdnd(\Delta)$, and $m\leq \min\{\ell(e(i)): i< d\}$ we have $\age_\Delta(\pi_m\circ e) = \age_{\bY}(\pi_{h(m)}\circ \xi\circ e)$. \qed
\end{enumerate}
\end{fact}

Now fix $f\in \emb(\bA, \str(\Delta))$, and write $\shp_{\Delta}(f) = (\Theta, g)\in D_\bA$. We will show that $\shp_{\bY}(\sigma\circ f) = (\bY^\Theta, \sigma^\Theta\circ g)$, which will prove Theorem~\ref{Thm:Upper_Bound}. First, we use $f$ to build $\beta \in \oemb(\bZ^\Theta, \bZ)$ as follows.   Start by setting $\beta((\frak{p}, b^\Theta)) = (\frak{p}, b)$ for each $(\frak{p}, b^\Theta)\in B^\Theta$; we note that $\sfR^\Theta_i \subseteq \sfR_i$ for each $i\in \sfU_{fr}$. Write $\phi_f = \phi_{\Delta, \im(f)}$. For $m< \omega$, since $\phi_f\in \demb(\Theta, \Delta)$,  we have that $\bZ^\Theta_m$ and $\bZ_{\wt{\phi}_f(m)}$ are $\cL'$-isomorphic, and we let $\beta|_{\bZ^\Theta_m}$ be the unique $\cL'$-isomorphism. Given $x\in \cdnd(\Theta)$, we set $\beta(x) = \phi_f(x)$. One needs to check that $\beta$ respect the binary relations of $\bZ^\Theta$, but this follows from the construction of $\bZ^\Theta$ and $\bZ$ and since $\phi_f\in \demb(\Theta, \Delta)$. This concludes the definition of $\beta$, and we let $\alpha\in \oemb(\bY^\Theta, \bY)$ be induced from $\beta$ in the obvious way. Note that $\alpha\circ \sigma^\Theta = \sigma\circ (\phi_f|_{\cdnd(\Theta)})$ and that $\im(\alpha) = \im(\sigma\circ \beta)$. 

We now show that $\im(\alpha) = \ol{\im(\sigma\circ f)}^{\bY}$. As $\im(\sigma\circ f)$ is finite, we can compute its $\bY$-closure using the ``top-down" procedure discussed in \cite{Zuc22} after Definition~4.4, which goes as follows: Given an arbitrary $S\subseteq Y$, set $S_{\max(S)} = \{\max(S)\}$. If $m< \max(S)$ and $S_{m+1}$ has been defined, we set $S_m = S_{m+1}$ or $S_m = S_{m+1}\cup \{m\}$. The latter happens iff $m\in S$ or $m\in \crit^\bY(c^\bY[S_{m+1}])$. Upon reaching $m = 0$, we have $S_0 = \ol{S}^{\bY}$. 

Now write $S = \im(\sigma\circ f)$, and consider the above procedure. We prove by reverse induction on $m\leq \max(S)$ that $S_m = \im(\alpha)\setminus m$. So fix $m\leq \max(S)$ and assume this holds for all $n$ with $m< n\leq \max(S)$. 
\begin{itemize}
    \item 
    If $m = \sigma(t)$ for some $t\in \cdnd(\Delta)$, then if $t\in \im(f)$, we have $m\in S$ and $S_m = S_{m+1}\cup \{m\}$. If $t\not\in \im(f)$, it follows from our inductive hypothesis and Fact~\ref{Fact:Y_Facts}(2) that any $x\in \pi_{m+1}\circ c^\bY[S_{m+1}\setminus S]$ satisfies $x^\sf{seq} = 0^{\ell(x)}$. It then follows from Fact~\ref{Fact:Y_Facts} $(3)$, $(4)$, and $(5)$ that $m\not\in \crit^\bY(c^\bY[S_{m+1}])$.
    \item 
    If $m \in \sigma[Z_n]$ for some $n< \omega$, there are two cases to consider. 
    \begin{itemize}
        \item 
        If $n\in \sp(\Delta)$, then $m = h(n)$, and $m\in \im(\alpha)$ iff there is $i\in \sp(\Theta)$ with $\wt{\phi}_f(i) = n$. 
        
        If $m\in \im(\alpha)$, we can find $s, t\in \cdnd(\Theta)$ with $s^\sf{p} = t^\sf{p}$ and $\ell(s)\wedge \ell(t) = i$. Then by Fact~\ref{Fact:Y_Facts}(4), we have $\ell(\xi\circ \phi_f(s)\wedge \xi\circ \phi_f(t)) = h(n)$, and since $\xi\circ (\phi_f|_{\cdnd(\Theta)}) = c^\bY\circ \alpha\circ \sigma^\Theta$, this shows that $h(n) = m\in \crit^\bY(c^\bY[S_{m+1}])$. 
        
        If $m\not\in \im(\alpha)$, then $n\not\in \im(\wt{\phi}_f)$. In particular, by Fact~\ref{Fact:Y_Facts}(4), there are no incomparable $s, t\in c^\bY[S]$ with $\ell(s)\wedge \ell(t) = m$, and by Fact~\ref{Fact:Y_Facts}(5), $\age_\bY(\pi_{m+1}\circ c^\bY[S_{m+1}]) = \age_\bY(\pi_{m}\circ c^\bY[S_{m+1}])$. Hence $m\not\in \crit^\bY(c^\bY[S_{m+1}])$.

        \item 
        If $n\in \ac(\Delta)$, then we consider all $m\in \sigma[Z_n]$ simultaneously. Note that $\min(\sigma[Z_n]) = h(n)\leq m$.  We have $m\in \im(\alpha)$ iff there is $i\in \ac(\Theta)$ with $\wt{\phi}_f(i) = n$. 

        If $m\in \im(\alpha)$, i.e.\ if $\sigma[Z_n]\subseteq \im(\alpha)$, then as $i\in\ac(\Theta)$, there must be an age change between $\pi_{h(n)}\circ c^\bY[S_{m+1}\cap S]$ and $\pi_{h(n+1)}\circ c^\bY[S_{m+1}\cap S]$ by Fact~\ref{Fact:Y_Facts}(5). Hence $\crit^\bY(c^\bY[S_{m+1}])\cap \sigma[Z_n]\neq\emptyset$, and this intersection must be all of $\sigma[Z_n]$ since, if level $i$ of $\Theta$ features an essential age change from $\cA$ to $\cB$, we chose $\bX_{\cA, \cB}$ with $|X_{\cA, \cB}|$ as small as possible.

        If $m\not\in \im(\alpha)$, i.e.\ if $\sigma[Z_n]\cap \im(\alpha) = \emptyset$, then by Fact~\ref{Fact:Y_Facts} $(2)$, $(4)$, and $(5)$, we see that $\crit^\bY(c^\bY[S_{m+1}])\cap \sigma[Z_n] = \emptyset$ and $S\cap \sigma[Z_n] = \emptyset$, so $S_{m+1} = S_{h(n)}$. 
    \end{itemize}
    \item 
    If $m\in \sigma[B]$, then suppose $i\in \sfU_{fr}$ is such that $m = \sigma((\frak{p}, b))$ for $\frak{p}\in \sfR_i$. If  $m\in \im(\alpha)$, then if $\frak{p}\in \sfQ_i^\Theta$, then if $s\in \cdnd(\Theta_\frak{p})$ and $t\in \cdnd(\Theta_\frak{q})$ with $\frak{q}\in \MP_i$ and $\frak{q}>_{\MP} \frak{p}$, we have $\ell(\xi\circ \phi_f(s)\wedge \xi\circ \phi_f(s)) = m$. If $\frak{p}\in \sfR_i^\Theta\setminus \sfQ_i^\Theta$, then there is $n< \rm{ht}(\Theta)$ and $x\in Z^\Theta_n$ with $U^{\bZ^\Theta_m}(x) = i$. If $s\in \cdnd(\Theta_\frak{p})$, then $\ell(\xi\circ \phi_f(s)\wedge \sigma\circ \beta(x)) = m$. In both cases, we have $m\in \crit^\bY(c^\bY[S_{m+1}])$.

    For the converse, we note that at levels in $\sigma[B]$, age changes are not an issue, and by Fact~\ref{Fact:Y_Facts} $(2)$, $(3)$, and $(4)$, splitting can only occur due to the two situations outlined above. Hence if $m\not\in \im(\alpha)$, we cannot have $m\in \crit^\bY(c^\bY[S_{m+1}])$.
\end{itemize}
This concludes the proof of Theorem~\ref{Thm:Upper_Bound}.

\section{Conclusion and future directions}
\label{Sec:FutureWork}

This paper concludes the project of understanding big Ramsey degrees for finitely-constrained binary free amalgamation classes. Naturally, this leads to many open questions and future research directions, some of which we shall state below.

\subsection{Finiteness of the language}
For the \fr limit of the class of all finite complete graphs with edges colored by countably many colors, it is possible to construct unavoidable colorings using arbitrarily many colors. This shows that the assumption of a finite language is necessary in Theorem~\ref{thm:main_intro}.

On the other hand, it is possible to prove finite big Ramsey degrees for the homogeneous unconstrained hypergraph which has hyperedges of each finite arity~\cite{braunfeld2023big}. It suggests that $\omega$-categoricity might be a relevant property in the study of big Ramsey degrees; see also Subsection 7.3 of \cite{Zuc19}.

\subsection{Infinite sets of constraints} Our result assumes that the sets of constraints are finite, as infinite sets in general lead to infinite posets of ages and, consequently, our strategy yields infinite embedding shapes in such cases. We believe that this is an inherent property of big Ramsey degrees, not just an artifact of our proof strategies, and state the following conjecture.

\begin{conj}
Let $\mathcal{L}$ be a finite language, let $\mathcal{F}$  be an infinite collection of  finite irreducible 
$\mathcal{L}$-structures such that no member of $\mathcal{F}$ embeds to any other member of $\mathcal{F}$ and let $\mathbf{K}$
be the \fr limit of $\mathrm{Forb}(\mathcal{F})$. Then there is $\mathbf{A} \in \mathrm{Forb}(\mathcal{F})$ such that the big Ramsey degree of $\mathbf{A} \in \c{K}$ is infinite. Furthermore, the size of such an $\b{A}\in \c{K}$ depends only on $\c{L}$.
\end{conj}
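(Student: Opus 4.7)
The plan is to establish the conjecture by finding a sort $\rho$ whose rank is bounded by a function of $\c{L}$ alone such that the age poset $P(\rho)$ is infinite, and then translating this into infinite big Ramsey degree for a structure $\b{A}\in \c{K}$ of size $|\rho|$ via the diagonal diary machinery developed in Sections~\ref{Sec:DiagSub} and \ref{Sec:Diaries}.

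First I would produce an infinite $P(\rho)$ for a small sort $\rho$. Since $\c{L}$ is finite, there are only finitely many isomorphism types of $\c{L}$-structures of any bounded size, so the infinite antichain $\c{F}$ necessarily contains structures of unbounded size. Fix a sequence $(\b{F}_n)_{n<\omega}$ of distinct members and, for each $n$, pick a vertex $v_n\in \b{F}_n$; the substructure $\b{F}_n\setminus\{v_n\}$ lies in $\c{K}$ by the antichain hypothesis, so $\b{F}_n$ encodes a reasonable gluing of sort $\iota_{U^{\b{F}_n}(v_n)}$ applied to the enumerated structure $\b{F}_n\setminus\{v_n\}$. A Ramsey-type pigeonhole applied to $(\b{F}_n)_n$ — pigeonholing over the unary of $v_n$, over the isomorphism type of a fixed ``core'' substructure $\b{C}\subseteq \b{F}_n\setminus\{v_n\}$, and over the isomorphism type of how $v_n$ attaches to $\b{C}$ — should extract an infinite subfamily whose associated gluings at the sort $\rho$ determined by $\b{C}$ produce strictly decreasing classes in $P(\rho)$, witnessing $|P(\rho)|=\infty$.

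Second I would invoke the lower-bound half of Theorem~\ref{Thm:EmbDiary}. Let $\b{A}\in \c{K}$ be an enumerated structure of size $|\rho|$ whose unary profile matches $\rho$. Given an infinite descending chain $\c{A}_0\supsetneq \c{A}_1\supsetneq\cdots$ in $P(\rho)$, for each $n<\omega$ one constructs a diagonal diary $\Theta_n$ coding $\b{A}$ whose initial portion runs through the consecutive age changes refining the chain down to $\c{A}_n$ before executing the coding events for $\b{A}$. The construction of diagonal substructures from Section~\ref{Sec:DiagSub} and the embedding theorem Theorem~\ref{Thm:EmbDiary} never use finiteness of $\c{F}$ in the lower-bound direction, so each $\Theta_n$ appears in every diagonal copy of $\b{K}$ inside $\b{K}$ and contributes a distinct unavoidable color on $\oemb(\b{A},\b{K})$. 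By Proposition~\ref{Prop:EnumRamseyDeg}, this forces the big Ramsey degree of $\b{A}$ to be infinite.

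The hard part will be controlling $|\b{C}|$, equivalently $|\rho|$, in terms of $\c{L}$ alone. A naive bound gives $|\b{C}|\leq |\b{F}_n|-1$, which depends on $\c{F}$ and is useless. What is needed is a reverse form of Lemma~\ref{Lem:LocalAges}: if the union $\bigcup_\rho P(\rho)$ contains infinitely many distinct essential consecutive age changes, then some fixed sort $\rho$ of rank at most $N(\c{L})$ already supports infinitely many. For binary $\c{L}$ one should hope to take $N(\c{L})\leq 2$, so that a single pair already witnesses infinite big Ramsey degree; more generally one would expect $N(\c{L})$ to grow with the maximum arity of $\c{L}$. Proving such a dimension-reduction statement seems to require genuinely new combinatorial tools beyond those developed in the paper, and this is the principal reason the statement is left as a conjecture.
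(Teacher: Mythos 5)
This statement is not proved in the paper: it appears verbatim as an open conjecture in Section~\ref{Sec:FutureWork}, where the authors only offer heuristic evidence (infinite sets of constraints ``lead to infinite posets of ages'') and cite Sauer's partial results on infinite families of forbidden tournaments. So there is no proof in the paper to compare yours against, and a complete argument along your lines would be new mathematics rather than a reconstruction.

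As a proof attempt, your proposal has genuine gaps beyond the dimension-reduction step you flag. First, the claim that ``the construction of diagonal substructures and Theorem~\ref{Thm:EmbDiary} never use finiteness of $\c{F}$ in the lower-bound direction'' is false: finiteness of $\c{F}$ gives finiteness of $\c{I}$, hence of every $P(\rho)$ (Corollary~\ref{Cor:FiniteAgePoset}) and of all maximal paths, and these facts are used essentially throughout the lower bound --- the definition of $\mathrm{Cause}$ picks a gluing of minimum cardinality from $\c{I}$, Lemma~\ref{Lem:LargePartitionRegular} and the compatible-pair machinery induct on $|\frak{p}|$ for finite paths $\frak{p}$, and Proposition~\ref{Prop:AlmostDiag} (hence Theorem~\ref{Thm:Diagonal}, which you need to reduce an arbitrary subcopy to a diagonal one) terminates precisely because $P(\rho)$ is finite. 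Second, an infinite strictly descending chain $\c{A}_0\supsetneq\c{A}_1\supsetneq\cdots$ in $P(\rho)$ need not refine into \emph{consecutive} age changes at all (the poset could contain a densely ordered chain), so the diaries $\Theta_n$ you describe may not exist as diaries with controlled critical levels. Third, even granting infinitely many pairwise distinct unavoidable diaries coding a fixed $\b{A}$, you must show the induced coloring takes infinitely many values on $\eta[\oemb(\b{A},\b{K})]$ for \emph{every} $\eta\in\oemb(\b{K},\b{K})$, not just for diagonal images; this is exactly where the finite-case pipeline (pass to a diagonal copy, then embed every diagonal diary) is invoked and where it breaks. Finally, note that Sauer's examples show the witnessing $\b{A}$ cannot in general be a singleton, so the quantitative ``size depends only on $\c{L}$'' clause is itself delicate; your instinct that $N(\c{L})=2$ should suffice for binary $\c{L}$ matches the paper's own guess, but neither you nor the paper has an argument for it.
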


Sauer in~\cite{SauerGraphs} considers free amalgamation classes of directed graphs and gives examples of infinite collections $\c{F}$ of finite tournaments such that in the class $\mathrm{Forb}(\c{F})$, the singleton graph still has finite big Ramsey degree. We remark that the statement of Theorem 1.1 from~\cite{SauerGraphs} is not correct, as it should refer to maximal paths through $P(1)$ rather than just antichains; however, the arguments in \cite{SauerGraphs} correctly show that if $\max(\vec{P}_0)$ is finite, then the singleton graph has big Ramsey degree at most $|\max(\vec{P}_0)|$ in $\mathrm{Forb}(\c{F})$. Thus for $\c{L}$ a binary language and $\c{F}$ an infinite set of finite irreducible structures, a natural guess is that some $\b{A}\in\mathrm{Forb}(\c{F})$ of size $2$ has infinite big Ramsey degree.

\subsection{Languages of higher arity} A natural generalization of the main result is to drop the requirement that all relations are of arity at most two. Unlike the binary case, there are not yet general upper bound results (and trying to obtain them is a very active area of research in which many of the authors are presently involved), which are a pre-requisite for obtaining characterizations of exact big Ramsey degrees.

Nevertheless, there exists an upper bound theorem for the generic 3-uniform hypergraph~\cite{Bal_3unifGraphs20} and also, more generally, unrestricted structures in languages with finitely many relations of every arity~\cite{braunfeld2023big}. There are two major differences between the 3-uniform hypergraph and  binary structures:
Firstly, instead of a single tree of types, a product of two trees naturally appears. The first tree is an analogue of the tree of $1$-types, while the second is a binary tree which can be thought of as a tree of ``higher order" types (see also~\cite{Bal_3unifGraphs19} for the intuition behind the second tree). This makes the analysis of embedding shapes and construction of diagonal substructures more complicated, because one has to control the interplay between meets in both trees. Secondly, since the language is ternary, the tree of $1$-types branches more and more at each successive level.

\begin{prob}\label{prob:3unif}
Characterize the big Ramsey degrees of the generic $3$-uniform hypergraph. Does the generic $3$-uniform hypergraph admit a big Ramsey structure?
\end{prob}

Once Problem~\ref{prob:3unif} is solved, the following two problems are natural generalizations.

\begin{prob}
\label{Prob:LargeArities}
Given a relational language $\c{L}$ with finitely many relational symbols of each arity, characterize the big Ramsey degrees for the class $\c{K}$ of all finite $\c{L}$-structures. Letting $\b{K} = \flim(\c{K})$, does $\b{K}$ admit a big Ramsey structure?
\end{prob}

An upper bound for these kinds of structures appeared in~\cite{braunfeld2023big}. We believe that a characterization is possible with the present tools and methodology and we expect that the result will naturally generalize the situation for the 3-uniform hypergraph.

\begin{prob}\label{prob:higher_arities}
Let $\mathcal{L}$ be a finite relational language, let $\mathcal{F}$  be a finite collection of  finite irreducible 
$\mathcal{L}$-structures and let $\c{K} = \mathrm{Forb}(\mathcal{F})$. Characterize the big Ramsey degrees of $\c{K}$. Letting $\b{K} = \flim(\c{K})$, does $\b{K}$ admit a big Ramsey structure?
\end{prob}

As has already been mentioned, a key ingredient for a solution to this problem is missing, namely a general upper bound. A sufficient condition (a stronger form of amalgamation) has been announced in~\cite{TypeAmalgEurocomb} where it is also conjectured that it is also necessary for finiteness of big Ramsey degrees.

\subsection{Strong amalgamation classes} Another obvious condition in the main theorem which can be relaxed is the requirement that our classes have free amalgamation. Instead, one could ask for \emph{relational} classes which have strong amalgamation (every amalgamation class can be expanded to a strong amalgamation class by adding functions which represent closures).

For certain collections of strong amalgamation classes with finitely many relations each of arity at most two, exact big Ramsey degrees have been successfully  characterized. 
Devlin \cite{Devlin} characterizes the exact big Ramsey degrees for finite linear orders, and Laflamme, Sauer, and Vuksanovic \cite{LSV} characterize the
exact big Ramsey degrees for {\em unconstrained} binary strong amalgamation classes, including finite graphs, finite digraphs, finite tournaments, and similar classes of structures with finitely many binary relations. 
Unconstrained classes have the property that there are no age changes in their coding trees, and 
hence, their exact big Ramsey degrees are characterized simply by weakly diagonal structures. Similar characterizations has been found for the rationals with an equivalence relation with finitely many dense equivalence classes 
and the circular directed graphs (see  \cite{LNVTS} and \cite{Barbosa}).

Recently, Coulson, Dobrinen, and Patel  in \cite{CDP1, CDP2}  formulated an amalgamation property called SDAP,  a strengthening of  strong amalgamation,  and proved that its slightly stronger version, SDAP$^+$,  implies that the exact big Ramsey degrees are characterized by weakly diagonal structures. 
The framework in   \cite{CDP1, CDP2}  encompasses  the results  in \cite{Devlin},  \cite{LNVTS}, and  \cite{LSV} (minus $\mathbf{S}(2)$) 
and provides exact big Ramsey degrees for new classes including 
  unconstrained strong amalgamation classes with an additional linear order (for instance, ordered graphs and ordered tournaments), 
generic $k$-partite graphs (with or without an additional linear order), and finitely many nested  convex equivalence relations. The methods in \cite{CDP1, CDP2} also show that structures satisfying SDAP$^+$ in finite languages with relations of {\em any} arity are indivisible.  This includes hypergraphs omitting some finite set of $3$-irreducible substructures 
and their ordered versions.
We point out that 
the classes  Forb$(\mathcal{F})$ for which the main results of this paper hold do not satisfy SDAP whenever $\mathcal{F}$  contains an irreducible substructure of size three or greater. More generally, SDAP implies that the natural generalisations of the posets $P(\rho)$ are singletons for every sort $\rho$.

For some strong amalgamation classes 
in a binary language
not satisfying SDAP,
upper bound results already exist. In particular, using parameter space methods developed by Hubi\v cka~\cite{Bal_ForbCycle,Hubicka2020CS}, upper bounds have been obtained for the generic poset, for metric spaces in a finite language and for certain free superpositions of these and other classes. Exact big Ramsey degrees for the generic poset have been characterized by the authors of this paper~\cite{Balko2023}, see also~\cite{Bal_PO} (we remark that~\cite{Balko2023} also contains a short self-contained description of the big Ramsey degrees of the triangle-free graph). It is quite interesting that in this case the upper bound from~\cite{Hubicka2020CS} was not flexible enough and a stronger upper bound theorem had to be proved as well.

A natural next step in this direction would be to characterize the exact big Ramsey degrees for metric spaces of finite diameter with integer distances.

\begin{prob}\label{prob:Nmetric}
Let $n\geq 3$ and let $\c{M}_n$ denote the class of all finite metric spaces with distances $\{1,\ldots,n\}$. Characterize the big Ramsey degrees of $\c{M}_n$. Letting $\b{M}_n = \flim(\c{M}_n)$, does $\b{M}_n$ admit a big Ramsey structure?
\end{prob}

A subset of the authors has already done some work in this direction. In particular, besides splitting and coding, the age-change events come in three different flavors. For single types, each type has a \emph{diameter}, i.e.\ the largest distance between two vertices of the given type. At the beginning, this diameter is equal to $n$, but if at some point a neighbor of the type in distance $a$ is discovered, the diameter of the type cannot be larger than $2a$. Additionally, each pair of types has a \emph{lower bound} and \emph{upper bound} on the distances between realizations of these two types. For example, if a vertex with distance $a$ to one type and $b$ to the other is discovered, then we know that the minimum distance between realizations of these two types is at least $\lvert a-b\rvert$ and the maximum is at most $a+b$. We conjecture that the big Ramsey degrees will be characterized by precisely describing the interplay between these five types of interesting events.

\subsection{Enumerating big Ramsey degrees and asymptotics}
\label{SubSec:Asymptotics}

After characterizing the big Ramsey degrees of $\bA\in \cK$ by some combinatorial objects as is done in this paper, it remains to actually count these objects, which becomes an intriguing problem enumerative combinatorics. One notable result is that in the class of finite linear orders, the big Ramsey degree of the $k$-element linear order is precisely $\tan^{(2k-1)}(0)$. Similar results for  the class of finite linear orders with an equivalence relation with $n$ many  equivalence classes  were obtained in \cite{LNVTS}
and explicit formulas for all circular directed graphs were obtained in \cite{Barbosa}, see also~\cite{Larson}.

It seems likely that for most other cases, such nice explicit formulas do not exist. A more tractable problem would be to characterize the asymptotic growth of the big Ramsey degrees for certain distinguished sequences of structures from a given \fr class. For instance, the end of Example~\ref{Exa:3FreeDiaries} suggests the problem of understanding the growth of the function which sends the number $n$ to the big Ramsey degree of the $n$-element anti-clique in $\c{G}_3$; the same problem makes sense for any finitely-constrained binary free amalgamation class.

In fact, calculating the asymptotic growth of big Ramsey degrees might serve as a measure of how ``difficult" it is to show that a given \fr class has finite big Ramsey degrees. For instance, we conjecture that among binary relational classes, those with SDAP will have slower big Ramsey degree growth than classes with non-trivial forbidden substructures. And among classes with non-trivial forbidden substructures, we conjecture that the sizes of the constraints influence the growth. For example, the asymptotic growth of big Ramsey degrees for the class of finite $K_4$-free graphs seems to be much larger than that of the class of finite posets or that of the class of $K_3$-free graphs. This may suggest which sorts of Ramsey theorems are needed to show that a given class has finite big Ramsey degrees. There are three main types of Ramsey theorems which have successfully been applied to big Ramsey degrees, each having different strengths and weaknesses: Milliken's tree theorem, which generalizes the ordinary infinite Ramsey theorem and is most suitable for unconstrained structures; various coding tree theorems (see~\cite{CDP1, CDP2, Dob0, Dob, Zuc22}), which this paper makes use of (see also \cite{DobIfCoLog}); 
and the Carlson--Simpson theorem about parameter spaces, which has recently been applied in proving finite big Ramsey degrees for the generic poset and giving a new proof of finite big Ramsey degrees for the class of finite $K_3$-free graphs \cite{Hubicka2020CS}. This naturally raises interest in knowing what the limitations of each proof method are. For instance, coding tree methods can handle any finitely-constrained binary free amalgamation class, but cannot handle the class of finite posets. On the other hand, the parameter space method gives simple proofs for the class of finite $K_3$-free graphs, the class of finite posets, and some other classes defined by (not necessarily irreducible) constraints of size at most $3$, but it seems unable to handle even the class of $K_4$-free graphs. Perhaps the parameter space method necessarily constrains the asymptotic growth of big Ramsey degrees for those classes on which it is successful.

\subsection{Infinite-dimensional Ramsey theorems}
Ramsey theorems involving colorings of  infinite objects 
are considered {\em infinite-dimensional}.
A  subset $\mathcal{X}$ of the Baire space $[\omega]^{\omega}$ is  called {\em Ramsey} if  for each infinite set $X\subseteq \omega$, there is an infinite subset $Y\subseteq X$ such that $[Y]^{\omega}$ is either contained in $\mathcal{X}$ or disjoint from $\mathcal{X}$.
While the  Axiom of Choice can be used to construct an $\cX\subseteq [\omega]^\omega$ which is not Ramsey, there are positive results upon restricting to suitably definable  $\cX$
 In particular, the  Galvin--Prikry theorem \cite{GP} shows that 
 every Borel $\cX\subseteq [\omega]^\omega$ is Ramsey, and Ellentuck \cite{Ellentuck} strengthens this to characterize those $\cX\subseteq [\omega]^\omega$ which are \emph{completely} Ramsey in terms of a topology refining the usual metric topology on Baire space. 
 
 In \cite{KPT}, Kechris, Pestov, and Todorcevic suggested the following direction of research.

\begin{prob}
Which homogeneous structures admit generalizations of the Gal\-vin--Prikry and/or Ellentuck theorems?
\end{prob}

 They also ask for dynamical properties associated to such infinite-dimensional Ramsey theorems. 
 Big Ramsey degrees provide constraints on the types of theorems which are possible:
 Given an enumerated \fr limit $\mathbf{K}$,  
 subcopies of $\mathbf{K}$ may be given different colors depending on their diaries.
 Thus, for an exact analogue of the Galvin--Prikry theorem, one must restrict to subcopies of $\mathbf{K}$ with the same diagonal diary.
 The first theorem of this sort appeared 
 in \cite{DobRado}, 
 where Dobrinen proved that upon fixing a particular coding tree for the Rado graph, all subcopies of the Rado graph with similar induced coding subtrees satisfy a version of the Galvin--Prikry Theorem.
 This was extended and sharpened 
 in \cite{DobSDAP}, where Dobrinen proved Galvin--Prikry theorems for the classes of structures in \cite{CDP1, CDP2}.
 In a recent preprint building on the present work, Dobrinen and Zucker \cite{DZ2023} show that every strong big Ramsey structure for any finitely-constrained binary free amalgamation class satisfies a sharp version of the Galvin--Prikry theorem.

\section*{Acknowledgement}
The authors are indebted to the referee, whose insistence that we shorten the paper has led to a vastly improved presentation.

M.\ B.\ and J.\ H.\ were supported by the Center for Foundations of Modern Computer Science (Charles University project UNCE/SCI/004). 
N.\ D.\ was supported by National Science Foundation grant DMS-1901753. 
J.\ H.\ and M.\ K.\ were supported by the project 21-10775S of  the  Czech  Science Foundation (GA\v CR). This article is part of a project that has received funding from the European Research Council (ERC) under the European Union's Horizon 2020 research and innovation programme (grant agreement No 810115).
M.\ K.\ was supported by the Charles University Grant Agency (GA UK), project 378119.
L.\ V.\ was supported by Beatriu de Pin\'os BP2018, funded by the AGAUR (Government of Catalonia) and by the Horizon 2020 programme No 801370.
A.\ Z.\ was supported by National Science Foundation grant DMS-2054302 and NSERC grants RGPIN-2023-03269 and DGECR-2023-00412. 

\bibliography{andy_masterpiece.bib}
\end{document}